\documentclass[12pt, a4paper, reqno, oneside]{article}

\usepackage{amssymb,amsmath}
\usepackage{amsmath,accents}
\DeclareUnicodeCharacter{0335}{}
\usepackage{tikz-cd}
\usepackage{authblk} 
\usepackage{physics} 
\usepackage{tikz}
\usepackage{titlesec} 
\usetikzlibrary{babel}
\usepackage{adjustbox}
\usepackage{url}
\usepackage{xcolor}
\usepackage{yhmath}
\usepackage{eso-pic}
\usepackage[T1]{fontenc}
\usepackage[utf8]{inputenc}
\usepackage[spanish,english]{babel}
\usepackage{csquotes}
\usepackage{pst-node}
\usepackage[english]{babel}
\usepackage{latexsym}
\usepackage{euscript}
\usepackage{mathtools}
\usepackage[all]{xy}
\usepackage{mathrsfs}
\usepackage[percent]{overpic}
\usepackage{setspace}
\usepackage{graphicx}
\usepackage{pdfsync}
\usepackage{amsthm}
\usepackage{thmtools}
\usepackage{enumitem}
\usepackage{yfonts}
\usetikzlibrary{calc}
\usetikzlibrary{decorations.pathmorphing}

\tikzset{curve/.style={settings={#1},to path={(\tikztostart)
    .. controls ($(\tikztostart)!\pv{pos}!(\tikztotarget)!\pv{height}!270:(\tikztotarget)$)
    and ($(\tikztostart)!1-\pv{pos}!(\tikztotarget)!\pv{height}!270:(\tikztotarget)$)
    .. (\tikztotarget)\tikztonodes}},
    settings/.code={\tikzset{quiver/.cd,#1}
        \def\pv##1{\pgfkeysvalueof{/tikz/quiver/##1}}},
    quiver/.cd,pos/.initial=0.35,height/.initial=0}

\tikzset{tail reversed/.code={\pgfsetarrowsstart{tikzcd to}}}
\tikzset{2tail/.code={\pgfsetarrowsstart{Implies[reversed]}}}
\tikzset{2tail reversed/.code={\pgfsetarrowsstart{Implies}}}
\tikzset{no body/.style={/tikz/dash pattern=on 0 off 1mm}}

\usepackage[colorlinks=true,linktocpage=true, pagebackref=false, citecolor=black,linkcolor=black]{hyperref}
\normalbaselines
\makeatletter

\def\ps@myfancy{\let\@mkboth\markboth
 \def\@evenhead{\vbox{\hsize\textwidth 
 \hbox to \textwidth{\sf\mdseries\thepage 
 \rule[-.6ex]{0mm}{2mm} \hfill\sf\large\leftmark}
 \vskip 1pt \hrule}}
 \def\@oddhead{\vbox{\hsize\textwidth 
 \hbox to \textwidth{{\sf\large\leftmark}
 \rule[-.6ex]{0mm}{2mm} \hfill\sf\mdseries{\thepage}}
 \vskip 1pt \hrule}}}

\def\ps@myfancyplain{
 \def\@evenhead{\vbox{\hsize\textwidth%
 \rule[-.6ex]{0mm}{2mm} \hfill }
 \vskip 1pt \hrule
 \vskip\headsep
 \vskip\textheight
 \vskip1pc
 \hbox to \textwidth{\sf\mdseries\thepage 
 \rule[-6ex]{0mm}{2mm} \hfill }}
 \def\@oddhead{\vbox{\hsize\textwidth 
 \vskip 1pt\hrule
 \vskip\headsep
 \vskip\textheight
 \vskip2pc
 \hbox to \textwidth{\hfill\rule[.4ex]{1pc}{2.5pt}
 \sf\mdseries\thepage}
}}}

\def\ps@myemptyfun{
 \def\@evenhead{\vbox{\hsize\textwidth
 \rule[-.6ex]{0mm}{2mm} \hfill }
 \vskip 1pt 
 \vskip\headsep
 \vskip\textheight
 \vskip1pc
 \hbox to \textwidth{\sf\mdseries\thepage 
 \rule[-0.6ex]{0mm}{2mm} 
 \hfill }}
 \def\@oddhead{\vbox{\hsize\textwidth 
 \vskip 1pt 
 \vskip\headsep
 \vskip\textheight
 \vskip2pc
}}}

\makeatother
\providecommand{\proofname}{Demostraci\'on.}
 {\par\noindent{\it Demostraci\'on. }\nopagebreak\normalsize}%

 {\par\noindent{\it #1. }\nopagebreak\normalsize}%
 {\hfill\linebreak[2]\hspace*{\fill}$\square$\\[-1pt]}
\makeatother

\def\sqbullet{\raise.2ex\hbox{\vrule width 3.5pt height 3.5pt}}

{}

{\em}

\newcounter{substep}
\def\thesubstep{\arabic{substep}}

{\em}

\newcounter{subsubstep}
\def\thesubsubstep{\arabic{subsubstep}}

{\em}

\numberwithin{figure}{section}

{}

\newtheoremstyle{mystyle}
  {}
  {}
  {\itshape}
  {}
  {\sf \bfseries}
  {}
{ }
  {\thmname{#1}\thmnumber{{\textcolor{blue}{\, \hspace{-1mm}#2.}}}\thmnote{ (#3)}}

\theoremstyle{mystyle}
\definecolor{royalblue(web)}{rgb}{0.25, 0.41, 0.88}
\hypersetup{
    colorlinks=true,
    linkcolor=royalblue(web),
    filecolor=magenta,      
    urlcolor=cyan,
    pdftitle={Overleaf Example},
    pdfpagemode=FullScreen,
    }

\titleformat{\section}
  {\normalfont\LARGE\bfseries}{\thesection.}{1em}{}
\titleformat{\subsection}
  {\normalfont\Large\bfseries}{\thesubsection.}{1em}{}
\titleformat{\subsubsection}
  {\normalfont\normalsize\itshape}{\thesubsubsection.}{1em}{}

\newtheorem{Teor}{Theorem}[section]

\newtheorem{Prop}[Teor]{Proposition}
\newtheorem{Coro}[Teor]{Corollary}

\newtheorem{Defi}[Teor]{Definition}

\newtheorem{Lema}[Teor]{Lemma}

\newtheorem{Obses}[Teor]{Observations}

\newtheorem{Obse}[Teor]{Remark}

\newcommand{\R}{{\mathbb R}}

\newcommand{\mail}[1]{\small\href{mailto:#1}{#1}}
\newcommand{\CR}[1]{{\color{blue}\textbf{\textsf{[Obs: #1]}}}}

\newenvironment{Abstract}
{
\begin{center}
\textbf{Abstract}\\
\vspace{0.25cm}
\begin{minipage}{14.5cm}}
{\footnotesize
\end{minipage}
\end{center}}
\begin{document}

	
	\begin{center}
		{\huge {\bfseries Nonlocal gradient, the nonlocal Laplacian and maximum principles \par}}
		\vspace{1cm}
	\end{center}
	\begin{center} 
		\begin{tabular}{l@{\hskip 2cm}l} 
			{\Large Jos\'e C. Bellido }{\small\textsuperscript{1}} & {\Large Guillermo García-Sáez }{\small\textsuperscript{1}} \\
			\mail{josecarlos.bellido@uclm.es} & \mail{guillermo.garciasaez@uclm.es} \\
			[10pt] 
		\end{tabular}\\
		{\centering{\Large José Camilo Rueda-Niño}{\small\textsuperscript{2}} \\
			\mail{jcrueda@fing.edu.uy}} \\ \vspace{5mm}
		
		\textsc{\textsuperscript{1}ETSI Industrial\\ Departamento de Matem\'aticas, Universidad de Castilla-La Mancha} \\
		Campus Universitario s/n, 13071-Ciudad Real, SPAIN. \\ \vspace{5mm}
		
		\textsc{\textsuperscript{2}PEDECIBA – Program for the Development of Basic Sciences and
			Instituto de Matemática y Estadística “Rafael Laguardia”, Universidad de la Republica,
			Montevideo, Uruguay}
		\vspace{5mm}
	\end{center}
	\begin{Abstract}

    We study the nonlocal $\rho$-Laplacian, defined as the composition of the nonlocal
divergence and gradient operators associated with a general radial kernel $\rho$: $\Delta_\rho u=\mbox{div}_\rho\left(D_\rho u\right)$.
Our first main contribution is to establish a precise connection between this operator
and the class of integro-differential elliptic operators studied by Fern\'andez-Real
and Ros-Oton (\cite{FernandezRos}), identifying explicit conditions on the kernel $\rho$ that guarantee
membership in this class. Our second main contribution concerns maximum and comparison principles for the $\rho$-Laplacian. We establish both a strong
and a weak maximum principle under conditions on $\rho$ that are strictly weaker
than those required for membership in the integro-differential class, thereby
covering a genuinely broader family of operators. The results require only minimal
assumptions on the kernel, and in particular do not rely on any fractional-type
comparability condition.
	\end{Abstract}
	
	\noindent {\bf }

    \noindent {\bf Keywords:} Nonlocal gradients, nonlocal Laplacians, integro-differential equations, maximum principles.

	\noindent {\bf AMS Subject Classification:} 35B50, 35B51, 35K09, 47G20.
	
	\tableofcontents
	
	\section{Introduction}

Many phenomena arising in nature, science and engineering are traditionally modeled through
differential equations or local variational principles. The underlying assumption of locality
means that the behavior of a system at a given point is determined solely by its immediate
neighborhood. However, there are numerous situations in which long-range interactions play a
fundamental role and cannot be adequately captured by purely local descriptions. This has
motivated the development of nonlocal models, which are formulated in terms of integral or
integro-differential equations and operators. Such models offer several advantages over their
classical counterparts: they are capable of encoding multiscale behavior and long-range
interactions, they typically impose weaker regularity requirements on the admissible functions,
and they provide a natural framework for describing discontinuities and singularities that would
otherwise be difficult to handle within a local theory. As a result, nonlocal models have found
application across a wide range of disciplines, including materials science, continuum
mechanics, diffusion processes, image processing and machine learning. A thorough account
of nonlocal modeling is given in~\cite{Du}.

A particularly relevant class of nonlocal operators consists of integro-differential operators,
which generalize classical differential operators by replacing pointwise derivatives with integral
expressions involving differences of the function over a domain. Typically, these operators adopt
the form
\begin{equation}\label{eq:integrodifferential}
	\mathcal{L}(u)(x)=\int_{\mathbb{R}^n} (u(x)-u(x+h))K(h)\,dh,
\end{equation}
where $K$ is a positive kernel. A systematic and modern treatment of such operators can be
found in the monograph~\cite{FernandezRos}, where the theory is developed in considerable
depth and generality. The regularity theory for both linear and nonlinear integro-differential
operators has also been extensively investigated in
\cite{Kassmann2009,KassmannRangSchwab2014}. Among the operators studied in this framework,
the fractional Laplacian $(-\Delta)^s$, $s\in(0,1)$, occupies a central place as a canonical
example of this class. It constitutes a natural generalization of the classical Laplacian to
non-integer orders of differentiability and arises in a variety of contexts, including anomalous
diffusion, potential theory, and the study of L\'evy processes. Its analytical properties,
including mapping properties between function spaces, maximum principles, and regularity theory,
have been extensively investigated in recent years. Foundational contributions to the theory
include \cite{CaSi07,DeGu13,RoSe14,Ros-Oton}; we refer to
\cite{Abantangelo,TenDefis} for more recent developments. On the numerical side, finite element
methods and other discretization techniques for fractional diffusion problems have been
developed and analyzed in \cite{AcostaBorthagaray2017,Bonito2018, Borthagaray2025}.

Alongside operators of diffusion type such as the fractional Laplacian, there has been growing
interest in nonlocal operators of gradient type, which provide a framework for a nonlocal vector
calculus~\cite{DeGuOlKa21, DGLZ, MeS, Silhavy2019}. These operators are typically defined
in terms of a kernel $\rho$ that we assume to be radial and singular at the origin; for a
function $u \colon \Omega \to \mathbb{R}$, the nonlocal gradient of $u$ with kernel $\rho$ is
defined as
\begin{equation} \label{eq:nonlocal_gradient}
D_\rho u(x) = \int_\Omega \frac{u(x) - u(y)}{|x - y|} \frac{x - y}{|x - y|}
\,\rho(x - y)\, dy.
\end{equation}
One of the distinguishing features of these nonlocal gradient operators is that, like the
fractional Laplacian, they require less regularity from the functions on which they act than
their classical counterparts, while still retaining a rich enough structure to support a
meaningful calculus and variational theory.

A particularly well-studied instance within this class is the Riesz $s$-fractional gradient,
$D^s u$, which has attracted considerable attention following the foundational works of Shieh
and Spector~\cite{ShiehSpector2015, ShiehSpector2018}. It corresponds to the kernel
$\rho = c_{n,s}|x|^{-(n+s-1)}$, $s\in (0,1)$, where $c_{n,s}$ is a suitable normalization
constant. A key feature of the Riesz fractional gradient is that the associated functional
spaces are equivalent to Bessel potential spaces $H^{s,p}(\mathbb{R}^n)$, providing a
fine functional-analytic framework for applications and variational models; see
\cite{BellidoCuetoGarcia2025, BellidoGarciaI2025}. Applications of the Riesz fractional
gradient include \cite{Almi2025, Almi2025II, BeCuMC, BeCuMC21, Borthagaray2025, Camilo2026, camposI, camposII, camposIII, CamposGarciaSaez2026, Carrero2025, COMI2019, Gambera2024, Garcia2025, Hidde2022, Silhavy2024}.

A further example is the truncated Riesz fractional gradient $D^s_\delta u$, obtained by
taking the kernel $\rho = \omega_\delta(x)|x|^{-(n+s-1)}$, where $\omega_\delta$ is a
cut-off function supported on the ball of radius $\delta > 0$, the
\emph{horizon} of nonlocal interaction, centered at the origin. This variant is motivated by applications requiring
work on bounded domains. Unlike the Riesz fractional gradient, the natural functional space
associated with $D^s_\delta$ is not a classical one, and establishing the minimal structural
properties of this space---Poincar\'{e}-type inequalities and compact embeddings---required
for variational analysis has been one of the central tasks in the early development of the
theory. This was initiated in~\cite{BellidoCuetoMora2023}, and since then further work and
applications in solid mechanics have been developed in~\cite{BellidoCuetoMoraII2023, Bellido_NH2024, BellidoCueto2024, BellidoGarcia2026, Cueto2023, Cueto2026, Kreisbeck2024}. 

Finally, for a general radial kernel $\rho$, the functional spaces associated with $D_\rho$
have been studied in~\cite{BellidoMoraHidde2025}, where optimal conditions on $\rho$
guaranteeing Poincar\'{e} inequalities and compact embeddings are identified. See
also~\cite{BellidoGarcia2026, CuetoHidde2025, Hidde2026}. 

In this paper we deepen the study of nonlocal gradients for general radial kernels $\rho$
through the analysis of the nonlocal $\rho$-Laplacian $\Delta_\rho$, defined---in analogy
with the local case---as the composition of the nonlocal $\rho$-divergence and the nonlocal
gradient $D_\rho$. We investigate the connection between nonlocal gradients and
integro-differential operators, showing that $\Delta_\rho$ can be written as an operator of
the form~\eqref{eq:integrodifferential}. We also determine conditions on the kernel $\rho$
under which $\Delta_\rho$ belongs to the class $\mathcal{K}_s(\lambda,\Lambda)$ considered
in~\cite{FernandezRos}. We regard this as a significant connection between the two theories,
which can be exploited in either direction: on the one hand, the extensive theory of
integro-differential operators applies to variational models based on nonlocal gradients; on
the other hand, the variational structure of nonlocal operators, grounded in an integration
by parts formula, gives rise to natural nonlocal differential objects that parallel their
classical counterparts. We also study maximum and comparison principles for the
$\rho$-Laplacian. Our results require only the minimal hypothesis on $\rho$ (hypothesis
\eqref{eqn:H0} below), so that $\Delta_\rho$ need not belong to the class $\mathcal{K}_s(\lambda,\Lambda)$,
thereby generalizing the results in~\cite{FernandezRos} to a strictly larger class of operators.

The outline of the paper is the following.
Section 2 fixes the notation used throughout.
Section 3 collects the necessary background material:
we recall the nonlocal gradient framework and the associated function spaces $H^{\rho,p}$,
the scaled kernel construction, and the class of integro-differential operators
$\mathcal{K}_s(\lambda,\Lambda)$ from~\cite{FernandezRos}.
Section 4 introduces the nonlocal $\rho$-Laplacian $\Delta_\rho$,
establishes its representation as a convolution operator,
studies the associated nonlocal Gagliardo spaces $W^{\rho,2}$ and their equivalence
with $H^{\rho,2}$, and analyzes the relationship between $\Delta_\rho$ and the
class $\mathcal{K}_s(\lambda,\Lambda)$.
Finally, Section 5 is devoted to maximum and comparison principles:
we prove both a strong maximum principle, valid under the sole assumption~(H0) on $\rho$,
and a weak maximum principle in the variational setting of $H^{\rho,2}_0(\Omega)$.

\section{Notation}
	We fix $n\in \mathbb{N}$ the dimension of our ambient space $\R^n$ and we will denote by $\Omega\subset \R^n$ and open bounded subset representing the body. The notation for Sobolev $W^{1,p}$ and Lebesgue $L^p$ spaces is the standard one, as is that of smooth
	functions of compact support $C_c^\infty$. We will
	indicate the domain of the functions, as in $L^p
	(\Omega$); the target is indicated only if it is not $\R$. The notation for the classical gradient on its weak sense is the usual one $D$. For a Banach space $X$, we denote its topological dual as $X^*$, and the usual {\it strong} convergence in normed spaces is denoted as $\to$.

	We write $|x|=\left(\sum_{j=1}^n x_j^2\right)^{1/2}$ for the Euclidean norm of a vector $x=(x_1,\ldots,x_n)\in \R^n$. The ball centered at $x\in \R^n$ and with radius $r>0$ is denoted by $B(x,r)=\{y\in \R^n: |x-y|<r\}$ or if there is no ambiguity on the center, just by $B_r$. The complementary of a set $E\subset \R^n$ is denoted by $E^c=\R^n\setminus{E}$, its closure by $\overline{E}$ and its boundary by $\partial E$.
	
	Our convention for the Fourier transform of functions $f\in L^1(\R^n)$ is $$\widehat{f}(\xi)=\int_{\R^n}f(x)e^{-2\pi ix\cdot \xi}\,dx,\,\xi\in \R^n.$$ This definition is extended by continuity and duality to other function and distribution spaces as usually in function spaces theory. We will sometimes use the alternative notation $\mathfrak{F}(f)$ for $\widehat{f}$. More details of this operator could be found in the classical text \cite{Grafakos2008}. We denote by $\mathcal{S}$ the Schwartz space and $\mathcal{S}'$ the space of tempered distributions.
	
	Regarding radial functions we have the following definitions:
	\begin{itemize}
		\item A function $f:\R^n\to \R$ is \textit{radial} if there exists $\overline{f}:[0,\infty)\to \R$ such that $\overline{f}(|x|)=f(x)$ for every $x\in \R^n$. The function $\overline{f}$ is called the \textit{radial representation} of $f$.
		\item A radial function $f:\R^n\to \R$ is \textit{radially decreasing} if its radial representation is a
		decreasing function.
		\item A function $g:\R^n\to \R^n$ is \textit{vector radial} if there exists a radial function $\overline{\phi}:[0,\infty)\to \R$  such that
		$\phi(x)=\overline{\phi}(|x|)x$ for every $x\in \R^n$.
	\end{itemize}
	
	For real valued functions, we use the monotonicity properties of being increasing and decreasing in
	the non-strict sense. A function $f :\R \to\R$ is called \textit{almost decreasing} if there exists a positive constant $C$ such
	that $f(t)\geq Cf(s)$ for every $t\leq s$, and an analogous definition for \textit{almost increasing}.
\section{Preliminaries}\label{sec:preliminaries}

This section collects the foundational material required throughout the paper.
We begin by recalling the nonlocal gradient framework associated with a kernel
\( \rho \), together with the variational function spaces arising from it, which will serve as the analytical backbone for the subsequent treatment. This
functional setting will provide the natural framework for the nonlocal elliptic operator driven by \( \rho \) that we introduce in the following sections. The presentation follows mainly \cite{BellidoCuetoMora2023, BellidoMoraHidde2025, CuetoHidde2025}, to which we refer the reader for further details and proofs. Finally, we recall some basic properties on the class of integro-differential operators introduced by Fernández-Real and Ros-Oton in \cite{FernandezRos}, which are closely related to the ones we consider in this work.
\subsection{Nonlocal gradients}
We assume throughout that the kernel \( \rho \) satisfies \eqref{eqn:H0},
which provides the natural and necessary conditions to establish a suitable
functional framework; see \cite{BellidoMoraHidde2025} for a detailed discussion.
\begin{equation}\label{eqn:H0} \tag{H0}
	\begin{cases}
		\text{\( \rho:\mathbb{R}^n\setminus\{0\} \to [0,\infty) \) is radial}, \\
		\text{\( \rho \in L^1_{\text{loc}}(\mathbb{R}^n) \) with 
			\( \displaystyle \int_{\mathbb{R}^n} \min\{1,|x|^{-1}\}\rho(x)\,dx < \infty \),} \\
		\text{\( \inf_{\overline{B(0,\varepsilon)}}\rho >0 \) for some \( \varepsilon>0 \).}
	\end{cases}
\end{equation}
We now turn to the definition of the nonlocal gradient.
	\begin{Defi}[Nonlocal gradient]\em
		Let $u\in C_c^\infty(\R^n)$, we define the \textit{nonlocal gradient} with kernel $\rho$ of $u$ as $$D_{\rho}u(x):=\int_{\R^n}\frac{u(x)-u(y)}{|x-y|}\frac{x-y}{|x-y|}\rho(x-y)\,dy,\,x\in \R^n.$$
	\end{Defi}
	
	It is straightforward to see that for functions $u\in C_c^\infty(\R^n)$, the integral involving the definition of $D_\rho u$ is absolutely convergent for each $x\in \R^n$. Moreover, $D_\rho u\in L^1(\R^n;\R^n)\cap L^\infty(\R^n;\R^n)$ (\cite{BellidoMoraHidde2025}), and hence $D_\rho u\in L^p(\R^n;\R^n)$ for each $1<p<\infty$ by Riesz-Torin interpolation inequality.\\
	
	In \cite{BellidoMoraHidde2025, CuetoHidde2025}, examples of admissible kernels are given, the most paradigmatic one 
	being $$\rho^s(x):=c_{n,s}\frac{1}{|x|^{n+s-1}},\,s\in (0,1),$$ where $c_{n,s}$ is a suitable normalization constant. The nonlocal gradient associated with this kernel is the well-known Riesz fractional gradient $D^s=:D_{\rho^s}$, which is defined for functions $u\in C_c^\infty(\R^n)$ as $$D^su(x)=c_{n,s}\int_{\R^n}\frac{u(x)-u(y)}{|x-y|^{n+s}}\frac{x-y}{|x-y|}\,dy=D(I_{1-s}u),$$ where $I_s$ is the Riesz potential.
	The Riesz fractional gradient was introduced by Shieh and Spector in \cite{ShiehSpector2015,ShiehSpector2018}, and since then has been widely studied.
	
	Another important example concerns a truncated version of the Riesz fractional gradient introduced in \cite{BellidoCuetoMora2023}, which is suitable for bounded domains. This formulation is motivated by concepts from peridynamics, as its integration domain is restricted to a ball of radius $\delta>0$, corresponding to the interaction horizon among particles in the peridynamic framework. Let $\delta>0$, and a non-negative radial cut-off
	function $w_\delta$ such that $w_\delta(0)>0$, $w_\delta\in C_c^\infty\left(B(0,\delta)\right)$ and $w_\delta/|\cdot|^{1+s}$ is radially decreasing.
	Now, taking the kernel 
	$$\rho_\delta^s(x):=\frac{w_\delta(x)}{|x|^{n+s-1}},$$ 
	yields that $D_{\rho_\delta^s}$ is, up to a normalization constant, equal to $D_\delta^s$, the nonlocal gradient introduced in \cite{BellidoCuetoMora2023}.
	
	The theory established for $D^s$ and $D^s_\delta$ has been further generalized to encompass nonlocal gradients $D_\rho$, where the kernel $\rho$ satisfies hypothesis \eqref{eqn:H0}, as shown in \cite{BellidoMoraHidde2025}. The present work is developed within this broader framework, and in this section we provide an overview of the fundamental aspects of the calculus involved, together with the underlying functional spaces.
	
	The first important result is the following representation formula for $D_\rho$ in terms of a convolution with the locally integrable function 
	$$Q_\rho(x):=\int_{|x|}^\infty \frac{\overline{\rho}(t)}{t}\,dt,\qquad D Q_\rho(x)=-\frac{\overline{\rho}(|x|)}{|x|}\frac{x}{|x|},\,x\in \R^n\setminus\{0\},$$ where $\overline{\rho}$ is the radial representation of the kernel $\rho$.
	By \cite[Proposition 2.6]{BellidoMoraHidde2025}, we have that $$D_\rho u=Q_\rho*D u=D\left(Q_\rho*u\right),$$ 
	for $u\in C_c^\infty(\R^n)$, and moreover, if $\rho\in L^1(\R^n)$, then $Q_\rho\in L^1(\R^n)$ (in fact, $\rho$ having compact support is enough for $Q_\rho$ to lie in $L^1(\R^n)$, \cite[Lemma 2.5 (iii)]{BellidoMoraHidde2025}). Also, taking Fourier transform, 
	
	$$\widehat{D_\rho u}(\xi)=2\pi i\xi \widehat{Q_\rho}(\xi)\widehat{u}(\xi):=\lambda_\rho(\xi)\widehat{u}(\xi),$$ with $$\widehat{Q_\rho}(\xi)=\frac{1}{2\pi|\xi|}\int_{\R^n}\frac{\rho(x)x_1}{|x|^2}\sin(2\pi|\xi|x_1)\,dx,\,\xi\in \R^n\setminus{\{0\}}.$$
	
	We can also define the nonlocal divergence for general kernels $\rho$.
	\begin{Defi}\em
		For $v\in C_c^\infty(\R^n;\R^n)$, we define the {\it nonlocal divergence} with kernel $\rho$ as 
		$$\operatorname{div}_\rho v(x):=\int_{\R^n}\frac{v(x)-v(y)}{|x-y|}\cdot \frac{x-y}{|x-y|}\rho(x-y)\,dy=(D_\rho \cdot v)(x),\,x\in \R^n.$$ 
	\end{Defi}
	As in the local setting, we also have a duality relationship between $D_\rho$ and $\operatorname{div}_\rho$ by \cite[Proposition 3.2]{BellidoMoraHidde2025}.
	\begin{Lema}[Integration by parts]\label{IBP}
		Let $u\in C_c^\infty(\R^n)$ and $v\in C_c^\infty(\R^n;\R^n)$. Then, $$\int_{\R^n}D_\rho u\cdot v\,dx=-\int_{\R^n}u\operatorname{div}_\rho v\,dx.$$
	\end{Lema}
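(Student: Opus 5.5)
The plan is to reduce the identity to Fubini plus the antisymmetry of the vector kernel, using the convolution representation of $D_\rho$ recalled above as a fallback. Set
\[
K(z):=\frac{z}{|z|^2}\,\rho(z),\qquad z\neq 0,
\]
which is odd, $K(-z)=-K(z)$, because $\rho$ is radial. Then
\[
D_\rho u(x)=\int_{\R^n}(u(x)-u(y))K(x-y)\,dy
\quad\text{and}\quad
\operatorname{div}_\rho v(x)=\int_{\R^n}(v(x)-v(y))\cdot K(x-y)\,dy.
\]

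\textbf{Step 1 (absolute integrability).} I would first show that the double integral
\[
\iint |u(x)-u(y)|\,|v(x)|\,|K(x-y)|\,dy\,dx
\]
is finite, together with the analogous integral with the roles of $u$ and $v$ exchanged. For $|x-y|<1$ I use $|u(x)-u(y)|\le \|Du\|_\infty|x-y|$, so the integrand is bounded by $\|Du\|_\infty\,\rho(x-y)\,|v(x)|$. For $|x-y|\ge1$ I bound it by $2\|u\|_\infty\,|x-y|^{-1}\rho(x-y)\,|v(x)|$. Integrating first in $y$, both pieces are controlled by $\int\min\{1,|z|^{-1}\}\rho(z)\,dz<\infty$ from \eqref{eqn:H0}, and then integrating in $x$ over the compact support of $v$ gives a finite quantity. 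The same estimate with $v$ in place of $u$ handles $\operatorname{div}_\rho v$.

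\textbf{Step 2 (the splitting problem and how to handle it).} I would like to expand
\[
\int D_\rho u\cdot v=\iint (u(x)-u(y))\,K(x-y)\cdot v(x)\,dy\,dx
\]
into the terms involving $u(x)$ and $u(y)$ separately, swap variables in one of them, and use oddness of $K$ to land on
\[
-\iint u(x)\,(v(x)-v(y))\cdot K(x-y)\,dy\,dx.
\]
The difficulty is that the separated pieces need not be absolutely convergent near the diagonal, since $\rho(z)/|z|$ need not be integrable at $0$. This is the main obstacle. I would resolve it by truncation: replace $K$ by $K_\varepsilon:=K\,\mathbf 1_{\{|z|>\varepsilon\}}$. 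With this replacement every separated term is absolutely integrable, because $|K_\varepsilon|\le \varepsilon^{-1}\rho$ near the origin and $|K_\varepsilon(z)|\le|z|^{-1}\rho(z)$ away from it, both integrable against compactly supported bounded functions.

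\textbf{Step 3 (identity at level $\varepsilon$).} For the truncated kernel I would compute
\[
\iint (u(x)-u(y))\,K_\varepsilon(x-y)\cdot v(x)
=\iint u(x)\,K_\varepsilon(x-y)\cdot v(x)-\iint u(y)\,K_\varepsilon(x-y)\cdot v(x).
\]
In the second term I swap $x$ and $y$ and use $K_\varepsilon(y-x)=-K_\varepsilon(x-y)$. Recombining gives exactly
\[
-\iint u(x)\,(v(x)-v(y))\cdot K_\varepsilon(x-y)\,dy\,dx.
\]
Letting $\varepsilon\to0$, dominated convergence with the majorants from Step 1 then yields the lemma.

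As an alternative route, I could use the convolution representation $D_\rho u=Q_\rho*Du$. Since $Q_\rho\in L^1_{\mathrm{loc}}$ is even and $u,v$ have compact support, Fubini gives
\[
\int (Q_\rho*Du)\cdot v=\int Du\cdot(Q_\rho*v)=-\int u\,\operatorname{div}(Q_\rho*v)
\]
by classical integration by parts. It then remains to check that $\operatorname{div}(Q_\rho*v)=\operatorname{div}_\rho v$, which holds by applying the same representation componentwise.
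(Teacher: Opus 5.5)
Your argument is correct in structure and differs from the paper, which gives no proof and simply cites \cite[Proposition 3.2]{BellidoMoraHidde2025}. One line needs repair: the recombination in Step 3. Swapping $x$ and $y$ in the second term and using oddness turns $-\iint u(y)K_\varepsilon(x-y)\cdot v(x)$ into $+\iint u(x)K_\varepsilon(x-y)\cdot v(y)$. What you actually obtain is therefore
\[
\iint u(x)\,K_\varepsilon(x-y)\cdot\bigl(v(x)+v(y)\bigr)\,dy\,dx,
\]
not the expression with $v(x)-v(y)$. The two coincide only because the diagonal term vanishes:
\[
\iint u(x)v(x)\cdot K_\varepsilon(x-y)\,dy\,dx=\Bigl(\int_{\R^n}u(x)v(x)\,dx\Bigr)\cdot\Bigl(\int_{\R^n}K_\varepsilon(z)\,dz\Bigr)=0 .
\]
This holds because $K_\varepsilon\in L^1(\R^n;\R^n)$, which your Step 2 bounds give, and $K_\varepsilon$ is odd. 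You should state this explicitly. It is exactly the cancellation that has no meaning for the untruncated $K$, since $\rho(z)/|z|$ need not be integrable at $0$, so it is the real reason the truncation is needed. With it, both sides equal $-\iint u(y)K_\varepsilon(x-y)\cdot v(x)\,dy\,dx$. Your passage $\varepsilon\to0$ by dominated convergence with the Step 1 majorants is then fine.

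Your alternative route fits the paper's framework. Right after the lemma the paper records $\operatorname{div}_\rho v=\operatorname{div}(Q_\rho*v)$. Together with $D_\rho u=Q_\rho*Du$, the evenness of $Q_\rho$, and Fubini, this reduces the lemma to classical integration by parts against the smooth field $Q_\rho*v$. Fubini is legitimate since $Q_\rho\in L^1_{\mathrm{loc}}$ and $Du$, $v$ have compact support. For the classical integration by parts, the compact support is carried by $u$; $Q_\rho*v$ itself need not be compactly supported. That route is two lines and shows the structure: $D_\rho=D\circ(Q_\rho*\,\cdot\,)$ with $Q_\rho$ even, exactly as $D^s=DI_{1-s}$ in the Riesz case. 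However, it rests on the representation formula \cite[Proposition 2.6]{BellidoMoraHidde2025} and on the local integrability of $Q_\rho$, whose proofs require their own Fubini-type arguments. Your truncation argument, once the cancellation above is added, is self-contained. It uses only (H0) and the evenness of $\rho$, and never needs $Q_\rho$.
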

	The representation formula for $D_\rho$ could be also extended for the nonlocal divergence. In particular, since $\operatorname{tr}D_\rho u=\operatorname{div}_\rho u$, we have that $$Q_\rho*\operatorname{div}v=Q_\rho*(\operatorname{tr}D v)=\operatorname{tr}(Q_\rho*v)=\operatorname{tr}D_\rho v=\operatorname{div}_\rho v.$$ On the other hand, $$\operatorname{div}_\rho v=\operatorname{tr}D_\rho v=\operatorname{tr}D (Q_\rho*v)=\operatorname{div}(Q_\rho*v).$$
	\subsection{Nonlocal Sobolev spaces}
	
	In analogy to the fractional Sobolev spaces $H^{s,p}(\R^n)$, commonly known in the literature as Bessel potential spaces, for $1<p<\infty$, the $\rho$-nonlocal Sobolev spaces $H^{\rho,p}(\R^n)$ were introduced in \cite{BellidoMoraHidde2025} as $$H^{\rho,p}(\R^n):=\{u\in L^p(\R^n): D_\rho u\in L^p(\R^n;\R^n)\},$$ endowed with the norm $$\norm{u}_{\rho,p}:=\norm{u}_p+\norm{D_\rho u}_p.$$ As a consequence of \cite[Theorem 3.9 (i)]{BellidoMoraHidde2025} and the completeness of those spaces, we can alternatively define them as
	\begin{equation}\label{def-non-local-soblev}
		H^{\rho,p}(\R^n):=\overline{C_c^\infty(\R^n)}^{\norm{\cdot}_{\rho,p}}.
	\end{equation}

    Clearly, by density, definition of $D_\rho$ can be extended to $H^{\rho,p}(\R^n)$. For the choice $\rho=\rho^s, s\in (0,1)$, we have that by \cite[Theorem 1.7]{ShiehSpector2015}, $$H^{\rho,p}(\R^n)=H^{s,p}(\R^n),$$ where $H^{s,p}(\R^n)$ is the classical Bessel potential space (see \cite{BellidoCuetoGarcia2025,BellidoGarciaI2025} and the references therein). 
	For an open set $\Omega\subset\R^n$, we define the closed subspace $$H^{\rho,p}_0(\Omega):=\left\{u\in H^{\rho,p}(\R^n): u(x)=0\text{ a.e. }x\in\Omega^c\right\}.$$ 
	If $\Omega$ is a Lipschitz domain, by \cite[Theorem 3.9 (iii)]{BellidoMoraHidde2025} we have that $$H^{\rho,p}_0(\Omega)=\overline{C_c^\infty(\Omega)}^{\norm{\cdot}_{\rho,p}},
	$$ where the elements $C_c^\infty(\Omega)$ should be interpreted as its extension to $\R^n$ by zero. Those spaces are complete, reflexive and separable for every $1<p<\infty$. For $p=2$, the spaces $H^{\rho,2}(\R^n)$ and  $H^{\rho,2}_0(\Omega)$ are Hilbert spaces. 
	In order to ensure the fundamental structural properties of the spaces $H^{\rho,p}$, such as Poincar\'e-type inequalities and compact embeddings, it is necessary to impose additional assumptions on the kernels $\rho$. Let $\varepsilon$ be as in \eqref{eqn:H0}. The main hypothesis we will employ are:
	
	\begin{enumerate}[label=(H\arabic*),ref=H\arabic*]
		\item\label{eqn:H1} There exists $\nu>0$ such that the functions $f_\rho:(0,\infty)\to \R$, defined by $f_\rho(r)= r^{n-2}{\rho}(r)$, and $g(r)= r^\nu f_\rho(r)$ are decreasing on $(0,\varepsilon)$;
		\item\label{eqn:H2} The function $f_\rho$ is smooth away from the origin, and for every positive integer $k$ there exists a constant $C(k)>0$ such that		
		\[
		\left|\frac{d^kf_\rho(r)}{dr^{k}}\right|\leq C(k)\,r^{-k}f_\rho(r), \quad r\in (0,\varepsilon);
		\]
        \item\label{eqn:H3} There exists $s\in(0,1)$ such that the function $r\mapsto r^{n+s-1}{\rho}(r)$ is almost decreasing on $(0,\varepsilon)$;
		\item\label{eqn:H4} There exists $t\in(0,1)$ such that the function $r\mapsto r^{n+t-1}{\rho}(r)$ is almost increasing on $(0,\varepsilon)$.
	\end{enumerate}
    Furthermore, we could restrict ourselves to kernels with compact support without loss of generality since the main properties of the function spaces $H^{\rho,p}(\R^n)$ and $H_0^{\rho,p}(\Omega)$ are encoded on the behavior of $\rho$ near
	zero by \cite[Proposition 3.10]{BellidoMoraHidde2025}. 
    Now, Poincaré inequality and the compactness results reads as follows (see \cite[Theorem 4.11]{BellidoMoraHidde2025}): 
	\begin{Teor}\label{Poincare-ineq}
		Let $\rho$ a kernel satisfying (\ref{eqn:H0}) with compact support and $1<p<\infty$. Furthermore, if $p=2$ we assume that $\rho$ satisfies (\ref{eqn:H1}) and if $p\not =2$ we assume that $\rho$ satisfies (\ref{eqn:H1}) and (\ref{eqn:H2}). Then, the following two statements hold: 
        \begin{enumerate}
            \item If $\lim_{t\to 0^+}t^{n-1}\overline{\rho}(t)>0$, then there exists a constant $C=C(\Omega,n,p,\rho)>0$, such that $$\norm{u}_{L^p(\Omega)}\leq C\norm{D_\rho u}_{L^p(\R^n;\R^n)},\,u\in H_0^{\rho,p}(\Omega).$$
            \item If If $\lim_{t\to 0^+}t^{n-1}\overline{\rho}(t)=\infty$, then $H^{\rho,p}_0(\Omega)$ is compactly embedded into $L^p(\Omega)$.
        \end{enumerate}
	\end{Teor}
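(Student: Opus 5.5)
The plan is to pass to the Fourier side and reduce both statements to the behaviour of the multiplier $\lambda_\rho(\xi)=2\pi i\xi\widehat{Q_\rho}(\xi)$. By density (Lipschitz $\Omega$, or working directly in $H^{\rho,p}_0(\Omega)$ as a closure) it suffices to treat $u\in C_c^\infty(\Omega)$. The first step is an analysis of $|\lambda_\rho(\xi)|=|\xi|\,|\widehat{Q_\rho}(\xi)|$ using the explicit formula for $\widehat{Q_\rho}$. Write $\widehat{Q_\rho}(\xi)=\frac{1}{2\pi|\xi|}\int \frac{\rho(x)x_1}{|x|^2}\sin(2\pi|\xi|x_1)\,dx$. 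Passing to polar coordinates and using that $f_\rho(r)=r^{n-2}\rho(r)$ is decreasing on $(0,\varepsilon)$ by (\ref{eqn:H1}), we get a one-dimensional oscillatory integral $\int_0^\infty f_\rho(r)\,\phi(|\xi| r)\,dr$ with a positive-type weight. Monotonicity of $f_\rho$ gives positivity and a lower bound of the form
\[
|\lambda_\rho(\xi)|\gtrsim |\xi|\int_0^{c/|\xi|} r\, f_\rho(r)\,r^{0}\,dr\cdot|\xi|
\]
for large $|\xi|$, comparable to $|\xi|^{?}$ times $f_\rho$ evaluated near $1/|\xi|$; heuristically $|\lambda_\rho(\xi)|\sim |\xi|^{2-n}\int_0^{1/|\xi|}r^{n-1}\rho(r)\,dr\cdot |\xi|^{n-1}\approx \overline\rho(1/|\xi|)|\xi|^{1-n}$. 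The extra decay condition on $r^\nu f_\rho$ should control the tail of the oscillation, so that the bound $|\lambda_\rho(\xi)|\ge c\,\min\{|\xi|,\ \overline{\rho}(1/|\xi|)\,|\xi|^{1-n}\}$ holds. Near $\xi=0$ one has $|\lambda_\rho(\xi)|\sim c|\xi|$, since $\rho$ has compact support and hence $Q_\rho\in L^1$ with $\widehat{Q_\rho}(0)>0$.

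For part 1, the hypothesis $\lim_{t\to0}t^{n-1}\overline\rho(t)>0$ makes $\overline\rho(1/|\xi|)|\xi|^{1-n}$ bounded below for large $|\xi|$. Hence $|\lambda_\rho|\ge c>0$ away from a neighbourhood of the origin. For $p=2$ we then use Plancherel and split frequencies into $|\xi|\ge r_0$ and $|\xi|<r_0$. High frequencies are controlled directly. Low frequencies are handled through the classical Poincaré-type argument: $u$ is supported in bounded $\Omega$, so $|\widehat u(\xi)|\le |\Omega|^{1/2}\|u\|_2$. The standard trick is to write $u=\mathcal{F}^{-1}(\lambda_\rho^{-1})*D_\rho u$ at low frequencies and estimate the low-frequency part by $|\xi|^{-1}$ integrability in dimension $n\ge2$. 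Alternatively, and more cleanly, we compare with the local Poincaré inequality via $D_\rho u=D(Q_\rho*u)$ and an argument by contradiction using compactness of the low-frequency piece. For $p\neq2$, (\ref{eqn:H2}) is exactly what makes $\lambda_\rho(\xi)/|\lambda_\rho(\xi)|^2$ and the related quotients Mikhlin multipliers on $|\xi|\ge r_0$, giving $L^p$ bounds on the high-frequency part. The low-frequency part is again treated by the contradiction–compactness argument.

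For part 2, when $t^{n-1}\overline\rho(t)\to\infty$, the lower bound gives $|\lambda_\rho(\xi)|\to\infty$ as $|\xi|\to\infty$. Given a bounded sequence in $H^{\rho,p}_0(\Omega)$, we apply Fréchet–Kolmogorov. Supports are uniformly in $\Omega$. For equicontinuity of translations we split $u=P_{\le R}u+P_{>R}u$. The high part has $L^p$ norm $\lesssim \sup_{|\xi|>R}|\lambda_\rho|^{-1}\|D_\rho u\|_p\to0$, using a Mikhlin bound for $p\ne2$ from (\ref{eqn:H2}). The low part is smooth with uniformly bounded derivatives, so it is equicontinuous. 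Part 1 then also follows from part 2's compactness combined with injectivity of $D_\rho$ on $H^{\rho,p}_0(\Omega)$.

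The main obstacle is the sharp lower bound on $|\lambda_\rho|$ for large $|\xi|$ from the oscillatory integral. I would first try the approach of integrating by parts in $r$ against the monotone $f_\rho$, together with the decreasing $r^\nu f_\rho$ to kill cancellation at scales $\gg 1/|\xi|$, as in the Riesz case. The multiplier estimates from (\ref{eqn:H2}) for $p\ne2$ are the second technical point.
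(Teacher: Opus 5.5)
Your plan is correct in outline, modulo two estimates you leave unproved (see below). It treats low frequencies differently from the route suggested by the tools the paper recalls.

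The paper gives no proof of Theorem~\ref{Poincare-ineq}; it imports the result. The ingredients behind it, as the paper's remarks indicate, are the two you isolate:
the high-frequency lower bound $\widehat{Q}_\rho(\xi)\gtrsim\overline{\rho}(1/|\xi|)|\xi|^{-n}$ from (H1), i.e.\ $|\lambda_\rho(\xi)|\gtrsim t^{n-1}\overline{\rho}(t)$ with $t=1/|\xi|$; and symbol bounds $|\partial^\alpha\widehat{Q}_\rho(\xi)|\lesssim|\xi|^{-|\alpha|}\widehat{Q}_\rho(\xi)$ from (H2).

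The natural way to use them, via a variant of the translation operators of Lemma~\ref{Translation}, is to set $v:=Q_\rho*u$. Since $Q_\rho\in L^1$ has compact support, $v\in W^{1,p}(\R^n)$ is supported in a fixed bounded set and $Dv=D_\rho u$. The classical Poincar\'e (resp.\ Rellich) theorem then bounds $\|v\|_{W^{1,p}}$ by $\|D_\rho u\|_p$ (resp.\ gives precompactness). One recovers $u=\mathfrak{F}^{-1}(\widehat{v}/\widehat{Q}_\rho)$ because $1/(\langle\xi\rangle\widehat{Q}_\rho(\xi))$, with $\langle\xi\rangle=(1+|\xi|^2)^{1/2}$, is a bounded Mikhlin multiplier in case 1 and one vanishing at infinity in case 2.

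What each route buys:
\begin{itemize}
\item In that route, the degeneracy $|\lambda_\rho(\xi)|\approx 2\pi\|Q_\rho\|_1|\xi|$ at the origin is absorbed by the local Poincar\'e inequality, and the constant is quantitative. The price is that you need $\widehat{Q}_\rho\neq0$ at every frequency.
\item Your contradiction--compactness (or Fr\'echet--Kolmogorov) treatment gives a non-explicit constant. It only uses $\widehat{Q}_\rho(0)=\|Q_\rho\|_1>0$, the high-frequency bounds, and analyticity of $\widehat{w}$ for compactly supported $w$. So it tolerates zeros of $\widehat{Q}_\rho$ at intermediate frequencies.
\end{itemize}
Your aside about comparing with the local Poincar\'e inequality through $D_\rho u=D(Q_\rho*u)$ is precisely the first route.

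Several steps need repair.
\begin{itemize}
\item You claim $|\lambda_\rho|\ge c>0$ away from a neighbourhood of the origin. You use this for the $p=2$ high-frequency step and to absorb low frequencies via $|\widehat{u}|\le|\Omega|^{1/2}\|u\|_2$. Your justification, monotonicity of $f_\rho$, only sees $\rho$ on $(0,\varepsilon)$, whereas intermediate frequencies also feel the part of $\rho$ outside $B(0,\varepsilon)$. Use the compactness argument for every $p$ instead.
\item The shortcut $|\widehat{u}(\xi)|\lesssim|\xi|^{-1}\|D_\rho u\|_1$ gives $|\widehat{u}|^2\lesssim|\xi|^{-2}$. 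This is integrable near $0$ only for $n\ge3$, not $n\ge2$.
\item Your displayed lower bound carries a spurious factor $|\xi|$. For $\rho^s$ it would assert $|\lambda_{\rho^s}(\xi)|\gtrsim|\xi|^{1+s}$, whereas $|\lambda_{\rho^s}(\xi)|=(2\pi|\xi|)^s$.
\item What you actually need is the one-sided high-frequency bound above (the lower half of the estimate on $\widehat{Q}_\rho$ quoted in the paper). You also need the (H2) symbol bounds uniformly in $R$, so that for $p\neq2$ the Mikhlin norm, not merely the supremum, of $(1-\chi(\xi/R))\,\overline{\lambda_\rho(\xi)}/|\lambda_\rho(\xi)|^2$ ($\chi$ a smooth cut-off) tends to zero as $R\to\infty$ in part 2. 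These two estimates are the actual content of the theorem, and your proposal only offers heuristics for them.
\item Deriving part 1 from part 2 plus injectivity of $D_\rho$ works only when $\lim_{t\to0^+}t^{n-1}\overline{\rho}(t)=+\infty$. When the limit is positive and finite, compactness is in general not available, so the direct argument is indispensable.
\end{itemize}
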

	These hypotheses are quite natural, and we refer to Sections~4 and~5 of \cite{BellidoMoraHidde2025} for a detailed discussion. As expected, they are satisfied by the kernels associated with $D^s$ and $D_\delta^s$. Hypothesis on Theorem \ref{Poincare-ineq} are almost optimal (see \cite[Proposition 7.5]{BellidoMoraHidde2025}). 
    
    The further hypothesis (\ref{eqn:H3}) and (\ref{eqn:H4}) are related to the behavior near zero of the kernel in comparison with purely fractional kernels, which is also a quite natural thing to recall.
    These extra hypothesis are required in order to prove some sort of nonlocal fundamental theorem of calculus \cite[Theorem 5.2]{BellidoMoraHidde2025}, i.e., the existence of a vector radial function $V_\rho\in C^\infty(\R^n\setminus\{0\};\R^n)\cap L^1_{loc}(\R^n;\R^n)$ such that for every $u\in H^{\rho,p}_0(\Omega)$, $$u(x)=\int_{\R^n}V_{\rho}(x-y)\cdot D_\rho u(y)\,dy,\,x\in\R^n.$$
    
    Furtheromre, assuming (\ref{eqn:H0})--(\ref{eqn:H4}) for $\rho$, we obtain the following lemma, which establishes the existence of some translation operators from the nonlocal framework to the local one and vice versa \cite[Lemma~2.12]{CuetoHidde2025}.
	\begin{Lema}[Translation operators]\label{Translation}
		Let $\rho$ satisfy (\ref{eqn:H0})--(\ref{eqn:H4}) with compact support. The linear map $\mathcal{Q}_\rho$ defined as $\mathcal{Q}_\rho u:=Q_\rho *u$, $u\in C_c^\infty(\R^n)$, extends to a linear bounded map from $H^{\rho,p}(\R^n)\to W^{1,p}(\R^n)$. Furthermore, it holds for all $u\in H^{\rho,p}(\R^n)$ that $$D_\rho u=D\left(\mathcal{Q}_\rho u\right).$$ The linear map $\mathcal{P}_\rho$ defined as $$v\mapsto\mathcal{P}_\rho  v:=\mathfrak{F}^{-1}\left(\widehat{v}/\widehat{Q}_\rho\right),\,v\in \mathcal{S}(\R^n),$$ extends to a bounded operator from $W^{1,p}(\R^n)$ to $H^{\rho,p}(\R^n)$. Moreover, $(\mathcal{P}_\rho)^{-1}=\mathcal{Q}_\rho$, and for every $v\in W^{1,p}(\R^n)$, $$D v=D_\rho(\mathcal{P}_\rho v).$$
	\end{Lema}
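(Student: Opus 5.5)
The plan is to work at the level of Fourier multipliers, where both maps are explicit, and then extend to the full spaces by density using \eqref{def-non-local-soblev}. For $u\in C_c^\infty(\R^n)$ we have $D_\rho u=D(Q_\rho*u)$ by \cite[Proposition 2.6]{BellidoMoraHidde2025}, and $Q_\rho\in L^1(\R^n)$ because $\rho$ has compact support \cite[Lemma 2.5 (iii)]{BellidoMoraHidde2025}. Hence Young's inequality gives
\[
\norm{Q_\rho*u}_p\le\norm{Q_\rho}_1\norm{u}_p,\qquad \norm{D(Q_\rho*u)}_p=\norm{D_\rho u}_p .
\]
So $\norm{\mathcal{Q}_\rho u}_{W^{1,p}}\le C\norm{u}_{\rho,p}$, and $\mathcal{Q}_\rho$ extends by density to a bounded map $H^{\rho,p}(\R^n)\to W^{1,p}(\R^n)$. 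Closedness of $D$ and of $D_\rho$ then passes the identity $D_\rho u=D(\mathcal{Q}_\rho u)$ to the limit. This part uses only \eqref{eqn:H0} and compact support.

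For $\mathcal{P}_\rho$, I first check that $\widehat{Q}_\rho$ is smooth and never vanishes. By \eqref{eqn:H0} and the explicit formula for $\widehat{Q}_\rho$, it is positive: near $\xi=0$ it tends to $\int Q_\rho>0$, and positivity elsewhere should follow from \eqref{eqn:H1}, since the monotonicity of $r^{n-2}\rho$ makes the sine integral positive. Hypotheses \eqref{eqn:H3}--\eqref{eqn:H4} give two-sided bounds $c|\xi|^{s-1}\lesssim\widehat{Q}_\rho(\xi)\lesssim C|\xi|^{t-1}$ for large $|\xi|$. It follows that $1/\widehat{Q}_\rho$ grows at most like $|\xi|^{1-s}$, so $\mathcal{P}_\rho v$ is a well-defined tempered distribution for $v\in\mathcal{S}$. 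Formally,
\[
\widehat{D_\rho\mathcal{P}_\rho v}=2\pi i\xi\,\widehat{Q}_\rho\,\widehat{v}/\widehat{Q}_\rho=\widehat{Dv},
\]
which gives $D_\rho(\mathcal{P}_\rho v)=Dv$, and also $\mathcal{Q}_\rho\mathcal{P}_\rho=\mathrm{id}=\mathcal{P}_\rho\mathcal{Q}_\rho$ on Schwartz functions. The only remaining estimate for boundedness is
\[
\norm{\mathcal{P}_\rho v}_p\le C\norm{v}_{W^{1,p}}.
\]

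That $L^p$ estimate is the main obstacle. I would split the symbol $m(\xi)=1/\widehat{Q}_\rho(\xi)$ with a smooth cutoff $\chi$ equal to $1$ near the origin. The low-frequency piece $\chi m$ is smooth and compactly supported, so it is an $L^p$ multiplier. For the high-frequency piece, write
\[
(1-\chi(\xi))\,m(\xi)=\Bigl[(1-\chi(\xi))\,\frac{m(\xi)}{1+|\xi|}\Bigr](1+|\xi|),
\]
and use that $(1+|\xi|)$-type multipliers are controlled by $\norm{v}_{W^{1,p}}$ through Riesz transforms. It then suffices to show that $(1-\chi)m/(1+|\xi|)$ is a Mikhlin multiplier, i.e.
\[
|\partial^\alpha(m(\xi)/|\xi|)|\lesssim|\xi|^{-|\alpha|}\quad\text{for }|\xi|\ge1 .
\]
For this I would prove derivative bounds of the form $|\partial^\alpha\widehat{Q}_\rho(\xi)|\lesssim|\xi|^{-|\alpha|}\widehat{Q}_\rho(\xi)$. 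I expect these to come from the radial formula for $\widehat{Q}_\rho$, written as a one-dimensional oscillatory integral in terms of $f_\rho$, together with the derivative control \eqref{eqn:H2} and the lower bound from \eqref{eqn:H3}, which keeps $\widehat{Q}_\rho$ from degenerating.

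Given the estimate, density of $\mathcal{S}$ in $W^{1,p}$ extends $\mathcal{P}_\rho$ to a bounded map into $H^{\rho,p}$, and the identities $D_\rho(\mathcal{P}_\rho v)=Dv$ and $(\mathcal{P}_\rho)^{-1}=\mathcal{Q}_\rho$ follow by continuity. In particular, this also shows $\mathcal{Q}_\rho$ is injective on $H^{\rho,p}(\R^n)$.
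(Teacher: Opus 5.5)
The paper gives no proof of this lemma; it is quoted from \cite[Lemma~2.12]{CuetoHidde2025}. Your architecture is the natural one, and it matches what the paper itself hints at in the remark after Proposition~\ref{Potential} (Mikhlin--H\"ormander under (H1)--(H2)).

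Several parts are already complete:
the $\mathcal{Q}_\rho$ part, via $Q_\rho\in L^1$, Young's inequality and density;
the reduction of $\norm{\mathcal{P}_\rho v}_p\lesssim\norm{v}_{W^{1,p}}$ to a Mikhlin bound for $(1-\chi(\xi))/(|\xi|\widehat{Q}_\rho(\xi))$;
the Riesz-transform treatment of $1+|\xi|$;
the density arguments for $\mathcal{Q}_\rho\mathcal{P}_\rho=\mathrm{id}$ and $\mathcal{P}_\rho\mathcal{Q}_\rho=\mathrm{id}$. The second identity uses that $Q_\rho*u\in C_c^\infty(\R^n)$ for $u\in C_c^\infty(\R^n)$, which holds because $Q_\rho$ has compact support.

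The gap is the one genuinely analytic step, $|\partial^\alpha\widehat{Q}_\rho(\xi)|\lesssim|\xi|^{-|\alpha|}\widehat{Q}_\rho(\xi)$ for large $|\xi|$. You only state it as something you expect to hold. So the boundedness of $\mathcal{P}_\rho$ is not yet proved.

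When you prove that estimate, three points matter.

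(i) Both sides must be measured at the same scale $|\xi|^{-n}\overline{\rho}(1/|\xi|)$. What you need is $|\partial^\alpha\widehat{Q}_\rho(\xi)|\lesssim|\xi|^{-|\alpha|-n}\overline{\rho}(1/|\xi|)$, divided by the matching two-sided estimate $\widehat{Q}_\rho(\xi)\approx|\xi|^{-n}\overline{\rho}(1/|\xi|)$ from (H1), recalled in Section~\ref{non-rho-lap}. If you divide instead by the lower bound $|\xi|^{s-1}$ from (H3), against power-type upper bounds (at best $|\xi|^{t-1-|\alpha|}$ via (H4)), you only get $|\partial^\alpha\widehat{Q}_\rho|/\widehat{Q}_\rho\lesssim|\xi|^{t-s-|\alpha|}$. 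That is not the required bound when $s<t$. The role of (H3) is to give $|\xi|\widehat{Q}_\rho(\xi)\gtrsim 1$, i.e.\ to make $m(\xi)/|\xi|$ bounded.

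(ii) The upper bound needs the oscillation of the Fourier integral, since the absolute-value bound is of order one. One way is a dyadic splitting of $x^\alpha Q_\rho$ at scale $1/|\xi|$, with integration by parts on the pieces away from the origin. This uses $|D^kQ_\rho(x)|\lesssim|x|^{-k}\overline{\rho}(|x|)$ near $0$, which is exactly what (H2) gives. Since (H2) only controls $f_\rho$ on $(0,\varepsilon)$, first cut $\rho$ off smoothly near $|x|=\varepsilon/2$. Because $f_\rho$ is smooth on $(0,\infty)$ and $\rho$ has compact support, the outer piece of $Q_\rho$ is in $C_c^\infty$. Its Fourier transform is then Schwartz and negligible against $\widehat{Q}_\rho\gtrsim|\xi|^{-n}$.

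(iii) Your sine-integral argument for $\widehat{Q}_\rho>0$ is correct. Integrating out $x'\in\R^{n-1}$ reduces it to $\int_0^\infty\sin(2\pi|\xi|t)h(t)\,dt$ with $h(t)=\int_{\R^{n-1}}f_\rho(t\sqrt{1+|z|^2})(1+|z|^2)^{-n/2}\,dz$. However, this works only if $f_\rho$ is nonincreasing on all of $(0,\infty)$. Monotonicity on $(0,\varepsilon)$ alone gives positivity only for large $|\xi|$. An intermediate zero of $\widehat{Q}_\rho$ would leave $\mathcal{P}_\rho$ undefined, so you should say explicitly that you use (H1) in this global form.
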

    Nonlocal Sobolev embeddings have been established as well in \cite[Theorem 6.2]{BellidoMoraHidde2025}.
	\begin{Teor}\label{Embedding}
		Let $\rho$ satisfy (\ref{eqn:H1})--(\ref{eqn:H4}) with compact support and pick $s,t$ as in those hypothesis. Let $1<p<\infty$ such that $tp<n$. Then, $$H^{\rho,p}_0(\Omega)\xhookrightarrow{}L^q(\Omega),\,1\leq q\leq p_s^*,$$ where $p_s^*=\frac{np}{n-sp}$.
	\end{Teor}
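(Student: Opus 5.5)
The embedding in Theorem~\ref{Embedding} will follow by pulling the problem back to the local Sobolev setting through the translation operator $\mathcal{Q}_\rho$ of Lemma~\ref{Translation}, and then using the fractional comparison (\ref{eqn:H3}) to land in a Bessel potential space $H^{s,p}$, where the classical fractional Sobolev embedding is available. It suffices to treat $u\in C_c^\infty(\Omega)$ and conclude by density, since for Lipschitz $\Omega$ the space $H^{\rho,p}_0(\Omega)$ is the closure of $C_c^\infty(\Omega)$; in general one can argue directly on $H^{\rho,p}(\mathbb{R}^n)$ functions vanishing outside $\Omega$. The target inequality is
$$\|u\|_{L^{p_s^*}(\Omega)}\le C\,\|u\|_{\rho,p}.$$
The range $1\le q<p_s^*$ then follows from H\"older's inequality, because $\Omega$ is bounded.

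The key step is a Fourier multiplier comparison. We have $\widehat{D_\rho u}=2\pi i\xi\widehat{Q_\rho}\widehat u$, while the Riesz gradient satisfies $\widehat{D^s u}=2\pi i\xi|2\pi\xi|^{s-1}\widehat u$. I would aim to show that
$$m(\xi):=\frac{(1+|\xi|^2)^{(s-1)/2}}{\widehat{Q_\rho}(\xi)+c}$$
is an $L^p$ Fourier multiplier, for a suitable low-frequency correction $c$. This gives
$$\|u\|_{H^{s,p}}\lesssim \|u\|_p+\|D_\rho u\|_p.$$
The ingredients are as follows. Hypothesis (\ref{eqn:H0}) with compact support makes $\widehat{Q_\rho}$ bounded, positive and smooth near the origin. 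Hypotheses (\ref{eqn:H3})--(\ref{eqn:H4}) give the two-sided asymptotics
$$|\xi|^{-t}\lesssim |\xi|\,\widehat{Q_\rho}(\xi)\,\Big/\,\big(|\xi|^{-1}\text{-scaled } \overline\rho\big)\lesssim |\xi|^{-s}$$
for large $|\xi|$. In particular, $\widehat{Q_\rho}(\xi)\gtrsim |\xi|^{s-1}$: the almost-decreasing property of $r^{n+s-1}\rho$ means $\rho$ is at least as singular as $|x|^{-(n+s-1)}$ up to constants. Hypothesis (\ref{eqn:H2}) gives the derivative bounds needed for Mikhlin's theorem. These estimates are exactly what underlies the proof of Lemma~\ref{Translation} (the boundedness of $\mathcal{P}_\rho$), so I expect to reuse that machinery: write $\mathcal{Q}_\rho u\in W^{1,p}$, and express $(1-\Delta)^{-(1-s)/2}$ applied to it as $u$ composed with a Mikhlin multiplier.

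With $u\in H^{s,p}(\mathbb{R}^n)$ and $sp<n$ (which holds since $s\le t$ would make $sp\le tp<n$; if instead $s>t$, one uses $t$ in that role), the standard embedding
$$H^{s,p}(\mathbb{R}^n)\hookrightarrow L^{np/(n-sp)}(\mathbb{R}^n)$$
finishes the argument. Restricting to $\Omega$, together with the Poincar\'e inequality of Theorem~\ref{Poincare-ineq} (applicable since (\ref{eqn:H3}) forces $t^{n-1}\overline\rho(t)\to\infty$), even lets one drop $\|u\|_p$ on the right-hand side.

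The main obstacle is the multiplier step: proving $|\xi|^{1-s}\widehat{Q_\rho}(\xi)\gtrsim 1$ at high frequency, with Mikhlin-type control of derivatives, from only almost-monotonicity assumptions. I would first try the explicit sine-integral formula for $\widehat{Q_\rho}$, using a change of variables $x\mapsto x/|\xi|$ and the monotonicity in (\ref{eqn:H1}) to show the oscillatory integral is comparable to
$$\int_0^{1/|\xi|}r^{n-1}\overline\rho(r)\,dr.$$
(\ref{eqn:H3}) then bounds this integral below by $c\,|\xi|^{1-s}\cdot|\xi|^{-1}$ times the correct power. If that proves delicate, the fallback is to quote the corresponding estimates from the proof of Lemma~\ref{Translation} in \cite{CuetoHidde2025}.
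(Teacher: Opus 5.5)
The paper gives no proof of this statement; it imports it from \cite[Theorem 6.2]{BellidoMoraHidde2025}. Your route is a sound and essentially self-contained way to obtain it: a Mikhlin-multiplier embedding $H^{\rho,p}(\R^n)\hookrightarrow H^{s,p}(\R^n)$, then the classical embedding $H^{s,p}(\R^n)\hookrightarrow L^{np/(n-sp)}(\R^n)$, then H\"older on the bounded set $\Omega$. It needs neither a Lipschitz boundary nor density of $C_c^\infty(\Omega)$, since $H^{\rho,p}_0(\Omega)\subset H^{\rho,p}(\R^n)$ by definition. It also shows that (H4) and the hypothesis $tp<n$ enter only through $s\le t$, hence $sp<n$. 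That inequality is forced: near the origin (H3) gives $\overline\rho(r)\gtrsim r^{-(n+s-1)}$ and (H4) gives $\overline\rho(r)\lesssim r^{-(n+t-1)}$. So your contingency ``if $s>t$, use $t$'' is vacuous, and it would only yield the weaker exponent $p_t^*$ anyway.

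Two steps need repair.

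First, the additive ``low-frequency correction'' $c$ in the denominator breaks the factorization. With $m=\langle\xi\rangle^{s-1}/(\widehat{Q_\rho}+c)$ you would need $\mathfrak{F}^{-1}\big(\langle\xi\rangle(\widehat{Q_\rho}+c)\widehat u\big)\in L^p$. That forces $u\in W^{1,p}$ and is not controlled by $\|u\|_{\rho,p}$. There are two ways to fix this:
\begin{itemize}
\item use that $\widehat{Q_\rho}$ is continuous and strictly positive (the positivity the paper attributes to (H1), which is also what makes $\mathcal{P}_\rho$ in Lemma~\ref{Translation} well defined); or
\item split with a smooth cutoff $\chi\equiv 1$ on a large ball. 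The low part $\chi\langle\xi\rangle^{s}\widehat u$ is controlled by $\|u\|_p$. Since $\xi\cdot\widehat{D_\rho u}=2\pi i|\xi|^2\widehat{Q_\rho}\widehat u$, the high part is controlled by $\|D_\rho u\|_p$ through the multipliers $(1-\chi(\xi))\langle\xi\rangle^{s}\xi_j/(2\pi i|\xi|^2\widehat{Q_\rho}(\xi))$.
\end{itemize}

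Second, you have misidentified the main obstacle. The lower bound $\widehat{Q_\rho}(\xi)\gtrsim|\xi|^{s-1}$ for $|\xi|\ge 1/\varepsilon$ is already in the paper, in the lemma on fractional estimates of $\widehat{Q}_\rho$. The correct two-sided form is $|\xi|^{s}\lesssim|\xi|\,\widehat{Q_\rho}(\xi)\lesssim|\xi|^{t}$, not your garbled display. What Mikhlin actually requires for $p\neq 2$ are the symbol estimates $|\partial^\alpha\widehat{Q_\rho}(\xi)|\le C_\alpha|\xi|^{-|\alpha|}\widehat{Q_\rho}(\xi)$ at high frequency. These come from (H1)--(H2) in \cite{BellidoMoraHidde2025}. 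They cannot be read off from Lemma~\ref{Translation} as a black box. Boundedness of $\mathcal{P}_\rho\colon W^{1,p}\to H^{\rho,p}$ only says that $1/(\langle\xi\rangle\widehat{Q_\rho})$ is an $L^p$ multiplier. That is strictly weaker than what you need for $\langle\xi\rangle^{s-1}/\widehat{Q_\rho}$, which is of order one rather than decaying. You must import the pointwise derivative bounds themselves, as in the remark following Proposition~\ref{Potential}.

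With these two fixes your argument is complete, modulo that cited symbol estimate.
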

	Moreover, when $sp>n$, \cite[Theorem 6.5]{BellidoMoraHidde2025}, yields that the embedding extends to the case $q=\infty$.
	\subsection{Scaled Kernels}
	In \cite{CuetoHidde2025}, a scaled version of the $\rho$ kernels is introduced in order to study $\Gamma$-convergence results for general nonlocal gradients with varying horizons. Starting from a kernel $\rho$ under the hypothesis (\ref{eqn:H0})--(\ref{eqn:H4}) and normalized in the sense that $$\operatorname{supp}\rho=\overline{B(0,1)},\,\norm{\rho}_1=n,$$
	they define the rescaled family of kernels $(\rho_\delta)_\delta$ for horizons $\delta>0$ as $$\rho_\delta(x):=c_\delta\rho(x/\delta),\,x\in \R^n\setminus\{0\}.$$ Here $(c_\delta)_\delta\subset (0,\infty)$ is a suitable sequence of scaling factors. In particular, they are chosen as $c_\delta=\delta^{-n}$ when the limit $\delta\to 0^+$ is studied, and as $c_\delta=\overline{\rho}(1/\delta)^{-1}$ for $\delta\to \infty$.
	This rescaling preserve the main properties of $\rho$, as they verify the hypothesis \eqref{eqn:H0}--\eqref{eqn:H4}. Moreover, $\operatorname{supp}\rho_\delta=\overline{B(0,\delta)}$, which makes the associated gradiente $D_{\rho_\delta}$ a nonlocal gradient of finite horizon $\delta>0$. The kernel $Q_{\rho_\delta}$ associated to $\rho_\delta$, satisfies that 
    $$Q_{\rho_\delta}(x)=c_\delta Q_\rho(x/\delta),\qquad \widehat{Q_{\rho_\delta}}(\xi)=c_\delta\delta^{-n}\widehat{Q_{\rho}}(\delta\xi).$$ For the choice $c_\delta=\delta^{-n}$, it is proven in \cite[Theorem 3.1]{CuetoHidde2025} the following localization result:
	
	\begin{Lema}[Localization for vanishing horizon]\label{Loc1}
		Let $\rho_\delta$ a rescaled kernel with $c_\delta=\delta^{-n}$. Then, the following hold:
		\begin{itemize}
			\item For each $\varphi\in C_c^\infty(\R^n)$, and all $\delta\in (0,1]$, $$\norm{D_{\rho_\delta}\varphi-D\varphi}_\infty\leq \delta^2\operatorname{
				Lip}(D^2\varphi).$$ In particular, $D_{\rho_\delta}\varphi\to D\varphi$ uniformly on $\R^n$ as $\delta\to 0^+$.
			\item For each $u\in W^{1,p}(\R^n)$, $u\in H^{\rho_\delta,p}(\R^n)$ for all $\delta\in (0,1]$, and $D_{\rho_\delta}u\to D u$ in $L^p(\R^n;\R^n)$ as $\delta\to 0^+$.
		\end{itemize}
	\end{Lema}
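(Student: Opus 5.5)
The plan is to treat the two bullets separately. The first is a Taylor expansion that uses the radial symmetry and the normalization $\norm{\rho}_1=n$. The second follows from the convolution representation $D_\rho u=Q_\rho*Du$, once we show that $(Q_{\rho_\delta})_\delta$ is an approximate identity.

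\emph{First bullet.} Substitute $y=x-h$ to get
$$D_{\rho_\delta}\varphi(x)=\int_{B(0,\delta)}\bigl(\varphi(x)-\varphi(x-h)\bigr)\frac{h}{|h|^2}\rho_\delta(h)\,dh.$$
Since $h/|h|^2$ is odd and $\rho_\delta$ is radial, only the odd part of $h\mapsto\varphi(x)-\varphi(x-h)$ contributes. That odd part is $\tfrac12(\varphi(x+h)-\varphi(x-h))$. We write it as
$$Dφ(x)\cdot h+e(x,h),$$
where $e$ is the symmetric second-difference remainder. Expanding $\tfrac12\int_{-1}^1 (D\varphi(x+th)-D\varphi(x))\cdot h\,dt$, the term $D^2\varphi(x)[h,h]$ cancels by symmetry in $t$. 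This gives $|e(x,h)|\le C\operatorname{Lip}(D^2\varphi)|h|^3$ with $C=1/6$.

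The main term is handled by radial symmetry: $\int (a\cdot h)\,h|h|^{-2}\rho_\delta(h)\,dh=\frac{a}{n}\int\rho_\delta=a$, because $\norm{\rho_\delta}_1=\norm{\rho}_1=n$ when $c_\delta=\delta^{-n}$. Therefore
$$|D_{\rho_\delta}\varphi(x)-D\varphi(x)|\le C\operatorname{Lip}(D^2\varphi)\int|h|^2\rho_\delta(h)\,dh\le C\operatorname{Lip}(D^2\varphi)\,\delta^2\int_{B_1}|z|^2\rho(z)\,dz\le C\,n\,\delta^2\operatorname{Lip}(D^2\varphi).$$
The one delicate point is the numerical constant. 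A crude bound gives $n/6$. To reach the stated constant $1$ I would estimate the vector integral against a unit direction instead of taking norms inside. Failing that, I would accept a dimensional constant, which is all that matters for the uniform convergence $D_{\rho_\delta}\varphi\to D\varphi$.

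\emph{Second bullet.} First compute, by Fubini in polar coordinates with $\omega_n=|\mathbb{S}^{n-1}|$,
$$\int_{\R^n}Q_\rho=\int_0^\infty\frac{\overline\rho(t)}{t}\,\frac{\omega_n t^n}{n}\,dt=\frac1n\norm{\rho}_1=1.$$
Since $Q_{\rho_\delta}(x)=\delta^{-n}Q_\rho(x/\delta)$, we get $Q_{\rho_\delta}\ge0$, $\norm{Q_{\rho_\delta}}_1=1$ and $\operatorname{supp}Q_{\rho_\delta}\subset\overline{B(0,\delta)}$. So $(Q_{\rho_\delta})_\delta$ is a standard approximate identity.

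Now take $u\in W^{1,p}(\R^n)$ and choose $\varphi_k\in C_c^\infty$ with $\varphi_k\to u$ in $W^{1,p}$. Young's inequality gives
$$\norm{D_{\rho_\delta}(\varphi_k-\varphi_j)}_p=\norm{Q_{\rho_\delta}*D(\varphi_k-\varphi_j)}_p\le\norm{D(\varphi_k-\varphi_j)}_p.$$
Hence $(\varphi_k)$ is Cauchy in $\norm{\cdot}_{\rho_\delta,p}$, so by \eqref{def-non-local-soblev} we have $u\in H^{\rho_\delta,p}(\R^n)$ with $D_{\rho_\delta}u=Q_{\rho_\delta}*Du$. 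Finally, $Q_{\rho_\delta}*Du\to Du$ in $L^p(\R^n;\R^n)$ by the standard approximate-identity argument, using continuity of translations in $L^p$.

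I expect the only real obstacle to be the sharp constant in the first bullet. The remaining steps are routine.
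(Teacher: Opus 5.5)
Your second bullet is correct. The steps are $\int Q_\rho=\frac1n\|\rho\|_1=1$, the scaling $Q_{\rho_\delta}=\delta^{-n}Q_\rho(\cdot/\delta)$ with support in $\overline{B(0,\delta)}$, Young's inequality along a $W^{1,p}$-approximating sequence, and continuity of translations in $L^p$; together they form the standard approximate-identity argument. The paper gives no proof of this lemma at all; it quotes it from the cited work.

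The gap is in the first bullet. Your expansion in the $\rho_\delta$-representation gives $\frac n6\,\delta^2\operatorname{Lip}(D^2\varphi)$, which is the stated inequality only for $n\le 6$. A dimensional constant is enough for uniform convergence, but not for the inequality as stated, and the repair you suggest does not close the gap. With only the pointwise bound $|e(x,h)|\le\frac16\operatorname{Lip}(D^2\varphi)|h|^3$, testing against a unit vector $\nu$ replaces $\frac n6$ by $\frac n6\,|\mathbb{S}^{n-1}|^{-1}\int_{\mathbb{S}^{n-1}}|\nu\cdot\theta|\,d\theta$, which still grows like $\sqrt n$. Nor can $\int_{B_1}|z|^2\rho\le n$ be improved in general, since (H0)--(H4) only constrain $\rho$ near the origin.

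The missing step is to treat the first bullet with the same representation you already use in the second. Since $\int Q_{\rho_\delta}=1$ and $\int Q_{\rho_\delta}(h)\,h\,dh=0$ by radial symmetry,
\[
D_{\rho_\delta}\varphi(x)-D\varphi(x)=\int_{B(0,\delta)}Q_{\rho_\delta}(h)\bigl(D\varphi(x-h)-D\varphi(x)+D^2\varphi(x)h\bigr)\,dh .
\]
The bracket is bounded by $\frac12\operatorname{Lip}(D^2\varphi)|h|^2\le\frac12\operatorname{Lip}(D^2\varphi)\,\delta^2$, so the constant is $\frac12$ in every dimension.

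This beats your route because $\int|h|^2Q_{\rho_\delta}(h)\,dh=\frac{1}{n+2}\int|h|^2\rho_\delta(h)\,dh$. Passing from $\rho_\delta$ to $Q_{\rho_\delta}$ is an integration by parts in the radial variable. Taking absolute values after it rather than before recovers the factor $\frac{3}{n+2}$ that your estimate inside the $\rho_\delta$-integral loses.
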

This result shows the expected behavior of the operator depending on the choice of \( \delta \). As the nonlocality vanishes, one recovers the corresponding local operators. On the other hand, when \( \delta \to \infty \), the nonlocal interactions extend to the whole space and, under assumptions \eqref{eqn:H3}--\eqref{eqn:H4}, one naturally recovers a purely fractional model associated with the Riesz fractional gradient, see \cite[Lemma 4.2]{CuetoHidde2025}, and \cite[Proposition 4.6]{CuetoHidde2025}.

\subsection{Integro-Differential operators}
In \cite{FernandezRos}, the following class of integro-differential operators $\mathcal{L}$ is extensively and deeply studied: Let $\mathcal{K}_s(\lambda,\Lambda)$ as the class of integro-differential operators $\mathcal{L}$ where $$\mathcal{L}u(x):=\operatorname{pv}_x\int_{\R^n}\left((u(x+y)-u(x)\right)K(y)\,dy,$$ such that the kernel $K$ is nonnegative and symmetric ($K(y)=K(-y)$ for any $y$), $K\in L^1_{loc}(\R^n\setminus\{0\})$, the following integrability conditions holds: \begin{equation}\label{eq:integrabilityK}
\int_{\R^n} \min\{1,|z|^2\}K(z)\,dz<\infty,
\end{equation}
and its Fourier symbol, which can be computed as $$m_K(\xi)=\int_{\R^n}(1-\cos(2\pi \xi\cdot z)K(z)\,dz,$$ must be comparable to the symbol of the fractional Laplacian, i.e., there must exists two positive constants $\lambda,\Lambda$ such that 
\begin{equation}\label{eq:fourierK}
0<\lambda |\xi|^{2s}\leq m_K(\xi)\leq \Lambda|\xi|^{2s},\,\xi\in \R^n.
\end{equation}
	In \cite[Proposition 2.2.1]{FernandezRos}, it is proven that the condition on the Fourier symbol of $K$ is equivalent to the existence of two positive constants $c_1,c_2$ such that \begin{align*}
		r^{2s}\int_{B_{2r}\setminus B_r}K(y)\,dz\leq c_2,\\
		c_1\leq r^{2s-2}\inf_{e\in \mathbb{S}^{n-1}}\int_{B_r}|e\cdot y|^2 K(y)\,dy.
	\end{align*}
	Observe that this class of general integro-differential elliptic operators generalizes the fractional Laplacian. Indeed, if we choose $K(y)=|y|^{-(n+2s)}$, we have that up to a constant, $\mathcal{L}_K=(-\Delta)^s$. Clearly, the condition under the Fourier multiplier is verified for any $0<\lambda\leq 1\leq \Lambda$, and the integrability condition follows easily as well since   $$\int_{B_1}|y|^2K(y)\,dy=c_{n,s}\int_{B_1}|y|^{2-n-2s}\,dy=c_{n,s}\omega_{n-1}\int_0^1 r^{2-n-2s}r^{n-1}=c_{n,s}w_{n-1}\int_0^1 r^{1-2s}\,dr,$$ which is finite if and only if $1-2s>-1$, i.e., $s<1$. Analogously, $$\int_{B_1^c}|y|^{-n-2s}\,dy=\omega_{n-1}\int_1^\infty r^{-2s-1}\,dr,$$ which is finite if and only if $0<s<1$. 
	
	This class of operators is deeply rooted on probability theory and its interplay with the modern theory of partial differential equations. We refer to \cite{FernandezRos,Foghem2025} and the references therein in this direction. 
    
	\section{The nonlocal $\rho$-Laplacian} \label{non-rho-lap}
Recall that the classical Laplacian is defined for sufficiently smooth functions
\( u \) as \( \Delta u = \operatorname{div}(D u) \). In the fractional
setting, it was shown in \cite{ShiehSpector2015} that the analogous identity
\( -\operatorname{div}_s(D^s u) = (-\Delta)^su \) holds for the Riesz
fractional gradient. It is therefore natural to seek a generalization of this
notion to general kernels \( \rho \). Indeed, in view of the definitions of
the nonlocal gradient and divergence introduced above and their resemblance
with their local counterparts
\begin{Defi} Given $u \in C_c^\infty(\mathbb{R}^n)$, we define the nonlocal Laplacian of kernel $\rho$, or simply the $\rho$-Laplacian, of $u$ as 
    \[
\Delta_\rho u := \operatorname{div}_\rho(D_\rho u), \quad u \in C_c^\infty(\mathbb{R}^n).
\]
\end{Defi}

This operator has already appear in a natural manner in the study of nonlocal models such as the general nonlocal linear elasticity model from \cite{BellidoGarcia2026}. 

Our aim now is to recast this operator  in a form analogous to the one familiar from the fractional Laplacian. To this end, we provide an alternative characterization of the \( \rho \)-Laplacian as a Fourier multiplier with symbol \( -4\pi^2|\xi|^2\widehat{Q_\rho}(\xi)^2 \). Actually, this establishes the remarkable fact that the $\rho$-Laplacian is a convolution operator, as is that of $D^\rho$.

\begin{Prop}
  For any $u\in C_c^\infty(\mathbb{R}^n)$, it holds
  \[
\Delta_\rho u = \int_{\mathbb{R}^n} K_\rho(x-y)\,u(y)\,dy,
\]
where the convolution kernel \( K_\rho \) is given by
\[K_{\rho}(x):= -D Q_\rho * D Q_\rho(x) = -\int_{\mathbb{R}^n}
\overline{\rho}(|z|)\,\overline{\rho}(|x-z|)\,
\frac{z}{|z|} \cdot \frac{x-z}{|x-z|}\,dz.
\]
Moreover, 
 \[\widehat{K_\rho}(\xi) = -4\pi^2|\xi|^2 \left(\widehat{Q}_\rho\right)^2(\xi)=-|\lambda_\rho(\xi)|^2=:m_{K_\rho}(\xi),\]
\end{Prop}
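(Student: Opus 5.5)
The plan is to use the representation formulas from Section~3 for the nonlocal gradient and divergence and to move derivatives onto the kernel $Q_\rho$.

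\textbf{Step 1 (convolution formula).} For $u\in C_c^\infty(\R^n)$ we have $D_\rho u=D(Q_\rho*u)=Q_\rho*Du$. Since $D_\rho u$ is not compactly supported, we first check that $\operatorname{div}_\rho$ can still be applied to it. When $\rho$ has compact support this is harmless, because $Q_\rho\in L^1(\R^n)$ and $D_\rho u$ is smooth with integrable derivatives. In general we argue componentwise in the sense of distributions. Using $\operatorname{div}_\rho v=\operatorname{div}(Q_\rho*v)$ with $v=D_\rho u$, we get
\[
\Delta_\rho u=\operatorname{div}\big(Q_\rho*(Q_\rho*Du)\big)=\sum_{j}\partial_j Q_\rho*\partial_j Q_\rho*u .
\]
Here both derivatives have been transferred onto the kernels. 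This is legitimate because $DQ_\rho(x)=-\overline\rho(|x|)x/|x|^2$ is locally integrable: near the origin $|DQ_\rho|\le \rho(x)/|x|$, and $\rho\in L^1_{loc}$ gives integrability of $\rho$ times $\min\{1,|x|^{-1}\}$ away from the singular region. Note, however, that $\rho/|x|$ near $0$ is \emph{not} automatically integrable, so some care is needed. Rather than moving both derivatives at once, we write $D_\rho u=Q_\rho*Du$ and $\operatorname{div}_\rho v=Q_\rho*\operatorname{div}v$. This gives
\[
\Delta_\rho u=Q_\rho*Q_\rho*\Delta u ,
\]
with all convolutions of $L^1_{loc}$ kernels against Schwartz-type objects. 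Only afterwards, formally, do we identify $Q_\rho*Q_\rho*\Delta u=(DQ_\rho*DQ_\rho)*u$ up to sign. Since $\partial_j\partial_j$ moves onto the two factors and $\partial_j Q_\rho$ is odd, the sign conventions yield $K_\rho=-DQ_\rho*DQ_\rho$ as in the statement, after substituting $DQ_\rho(z)=-\overline\rho(|z|)z/|z|^2$. The explicit integral formula then follows by inserting this expression into the convolution and expanding the dot product.

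\textbf{Step 2 (Fourier symbol).} This is the cleanest route and the one we would use to make everything rigorous. We have $\widehat{D_\rho u}=2\pi i\xi\widehat{Q_\rho}\widehat u=\lambda_\rho\widehat u$, and similarly $\widehat{\operatorname{div}_\rho v}=2\pi i\xi\cdot\widehat{Q_\rho}\widehat v$. Composing these,
\[
\widehat{\Delta_\rho u}=(2\pi i)^2|\xi|^2\widehat{Q_\rho}^2\widehat u=-4\pi^2|\xi|^2\widehat{Q_\rho}(\xi)^2\widehat u .
\]
Since $Q_\rho$ is real and radial, $\widehat{Q_\rho}$ is real, so this symbol equals $-|\lambda_\rho|^2$. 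On the other hand, $\widehat{\partial_jQ_\rho}=2\pi i\xi_j\widehat{Q_\rho}$. Hence
\[
\widehat{DQ_\rho*DQ_\rho}=\sum_j(2\pi i\xi_j)^2\widehat{Q_\rho}^2=-4\pi^2|\xi|^2\widehat{Q_\rho}^2 .
\]
Consequently $\widehat{K_\rho}$, where $K_\rho$ is defined with the stated sign, matches the symbol of $\Delta_\rho$ up to the sign convention of the statement. We would double-check this sign carefully against the definition, and then conclude that $\Delta_\rho u=K_\rho*u$ by Fourier inversion in $\mathcal S'$.

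\textbf{Main obstacle.} The hard part is justifying that $DQ_\rho*DQ_\rho$ is a well-defined function, i.e.\ that the $z$-integral converges for $x\neq0$, and that the Fourier identities hold in $\mathcal S'$. Integrability at $z=0$ and at $z=x$ needs $\rho(z)/|z|$-type control, which \eqref{eqn:H0} does not directly give; there, only $\rho$ itself is integrable. The first thing to try is to reduce to compactly supported $\rho$, so that $Q_\rho\in L^1$ and $\widehat{Q_\rho}$ is bounded, and then handle the convolution distributionally. In this setting $DQ_\rho$ is a tempered distribution, obtained as the derivative of the $L^1$ function $Q_\rho$, so $DQ_\rho*DQ_\rho$ makes sense as $\Delta(Q_\rho*Q_\rho)$ with $Q_\rho*Q_\rho\in L^1$. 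The pointwise integral formula is then interpreted where it converges absolutely, for instance for $x\neq0$ away from the singularities, which holds since $|z|^{-1}$-weighted singularities of $\rho$ at two distinct points are separated.
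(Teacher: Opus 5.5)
Your Step 2 follows the paper's route, and its computations are correct: $\widehat{\Delta_\rho u}=-4\pi^2|\xi|^2\widehat{Q}_\rho^2\,\widehat u$ and $\widehat{DQ_\rho*DQ_\rho}=\sum_j(2\pi i\xi_j)^2\widehat{Q}_\rho^2=-4\pi^2|\xi|^2\widehat{Q}_\rho^2$. Read together, though, they say $\Delta_\rho u=(DQ_\rho*DQ_\rho)*u$ with a \emph{plus} sign, so $-DQ_\rho*DQ_\rho$ has symbol $+4\pi^2|\xi|^2\widehat{Q}_\rho^2$. The three claims of the statement ($K_\rho=-DQ_\rho*DQ_\rho$, $\widehat{K_\rho}=-|\lambda_\rho|^2$, $\Delta_\rho u=K_\rho*u$) are therefore mutually incompatible. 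Deferring the sign to a later ``double-check'' leaves the proof unfinished exactly where it fails. Your Step 1 justification is wrong: derivatives pass through a convolution with no sign, $\partial_j(f*g)=(\partial_jf)*g=f*(\partial_jg)$. Hence $Q_\rho*Q_\rho*\Delta u=\sum_j\partial_jQ_\rho*\partial_jQ_\rho*u=\Delta(Q_\rho*Q_\rho)*u$, which is your own first display and your own distributional reading in the last paragraph; the oddness of $\partial_jQ_\rho$ never enters. The paper's proof makes the same slip: right after computing the symbol it declares $K_\rho:=-DQ_\rho*DQ_\rho$ without computing the transform of that convolution. The correct conclusion is $K_\rho=DQ_\rho*DQ_\rho$ with $\widehat{K_\rho}=-|\lambda_\rho|^2$. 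This is also the sign for which the kernel is nonnegative away from the origin, as the paper assumes later (see the example below).

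The claims about the explicit formula also fail. Inserting $DQ_\rho(z)=-\overline{\rho}(|z|)z/|z|^2$ gives
\[
DQ_\rho*DQ_\rho(x)=\int_{\R^n}\overline{\rho}(|z|)\,\overline{\rho}(|x-z|)\,\frac{z\cdot(x-z)}{|z|^2\,|x-z|^2}\,dz ,
\]
so the stated formula is missing the factor $|z|^{-1}|x-z|^{-1}$ and does not follow ``by expanding the dot product''. Nor does this integral converge absolutely for $x\neq0$. Near $z=0$ the integrand is, on a cone of directions, comparable to $\overline{\rho}(|z|)/|z|$, e.g.\ to $|z|^{-n-s}$ for $\rho^s$, and the separation of the two singular points is irrelevant. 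At best one gets a principal value, which needs regularity of $\rho$ near $|x|$ that (H0) does not give. Even corrected, this off-origin function does not represent $\Delta_\rho$ as a plain convolution, because the distribution $DQ_\rho*DQ_\rho$ carries a singular part at $0$. For $n=1$ and $\rho=\chi_{(-1,1)}$ one has $DQ_\rho=-\mathrm{p.v.}\,\frac1x\,\chi_{(-1,1)}$ and $DQ_\rho*DQ_\rho=f-\pi^2\delta_0$. Here $f(x)=-2|x|^{-1}\ln|1-|x||\geq0$ for $0<|x|<2$, $f=0$ otherwise, and $\int f=\pi^2$. Hence $\Delta_\rho u(x)=\int(u(y)-u(x))f(x-y)\,dy\neq\int f(x-y)u(y)\,dy$. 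Your reading $DQ_\rho*DQ_\rho=\Delta(Q_\rho*Q_\rho)$, with $Q_\rho*Q_\rho\in L^1$ for compactly supported $\rho$, is the right tool, and it is more than the paper's proof supplies. What it rigorously establishes is $\Delta_\rho u=(DQ_\rho*DQ_\rho)*u$ in $\mathcal{S}'$ with symbol $-|\lambda_\rho|^2$. The integral formula describes that distribution only away from the origin, and only in a principal-value sense.
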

\begin{proof}
    Since
\( D_\rho u = D(Q_\rho * u) \) and
\( \operatorname{div}_\rho v = \operatorname{div}(Q_\rho * v) \) for
\( u \in C_c^\infty(\mathbb{R}^n) \) and \( v \in C_c^\infty(\mathbb{R}^n;\mathbb{R}^n) \),
a direct computation gives
\begin{align*}
	\widehat{\Delta_\rho u}(\xi)
	&= \widehat{\operatorname{div}_\rho(D_\rho u)}(\xi)
	= 2\pi i\,\widehat{Q}_\rho(\xi)\,\xi \cdot \widehat{D_\rho u}(\xi)
	= 2\pi i\,\xi \cdot \widehat{Q}_\rho(\xi)
	\left(2\pi i\,\xi\,\widehat{Q}_\rho(\xi)\widehat{u}(\xi)\right) \\
	&= -4\pi^2|\xi|^2\widehat{Q}_\rho(\xi)^2\widehat{u}(\xi).
\end{align*}
Equivalently, we may write
\[
\widehat{\Delta_\rho u}(\xi) = \widehat{K_\rho}(\xi)\,\widehat{u}(\xi),
\]
with
\[
K_\rho(x) := -D Q_\rho * D Q_\rho(x),
\]
and therefore
\[\widehat{K_\rho}(\xi) = -4\pi^2|\xi|^2\left(\widehat{Q}_\rho\right)^2(\xi),
\]
so that \( \Delta_\rho u = K_\rho * u \), or explicitly,
\[
\Delta_\rho u = \int_{\mathbb{R}^n} K_\rho(x-y)\,u(y)\,dy,
\qquad
K_{\rho}(x) = -\int_{\mathbb{R}^n}
\overline{\rho}(|z|)\,\overline{\rho}(|x-z|)\,
\frac{z}{|z|} \cdot \frac{x-z}{|x-z|}\,dz.
\]
\end{proof}
In the sequel, we will use the notation for the symbol $m_{K_\rho}(\xi):=\widehat{K_\rho}(\xi)$. 

We further note that the operator \( K_\rho \) preserves the radial symmetry of
the kernel \( \rho \), a property that will play a role in the sequel.
	\begin{Lema}\label{Radial} Assume $\rho$ verifies (\ref{eqn:H0}). Then, the kernel $K_\rho$ is radial.
	\end{Lema}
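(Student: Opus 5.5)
The plan is to show that $K_\rho(Rx)=K_\rho(x)$ for every orthogonal matrix $R\in O(n)$. This uses only the radiality of $\rho$ from \eqref{eqn:H0} and the explicit convolution formula
\[
K_\rho(x)=-\int_{\R^n}\overline{\rho}(|z|)\,\overline{\rho}(|x-z|)\,\frac{z}{|z|}\cdot\frac{x-z}{|x-z|}\,dz
\]
from the preceding proposition. Equivalently, $K_\rho=-DQ_\rho*DQ_\rho$ with $DQ_\rho(x)=-\overline{\rho}(|x|)x/|x|^2$.

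Fix $R\in O(n)$ and substitute $z=Rw$ in the integral for $K_\rho(Rx)$; this change of variables has unit Jacobian. Then $|Rw|=|w|$ and $|Rx-Rw|=|x-w|$. Since $R$ preserves inner products,
\[
\frac{Rw}{|Rw|}\cdot\frac{R(x-w)}{|R(x-w)|}=\frac{w}{|w|}\cdot\frac{x-w}{|x-w|}.
\]
Hence the integrand at $(Rx,Rw)$ coincides with the integrand at $(x,w)$, and $K_\rho(Rx)=K_\rho(x)$ for every $x$ at which the integral is defined.

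Since $O(n)$ acts transitively on each sphere $\{|x|=r\}$, $K_\rho$ is constant on spheres. Setting $\overline{K_\rho}(r):=K_\rho(re_1)$ then gives a radial representation. Alternatively, one can argue on the Fourier side: $\widehat{Q_\rho}$ is radial (the given formula depends only on $|\xi|$), so the symbol $-4\pi^2|\xi|^2\widehat{Q_\rho}(\xi)^2$ is radial, and the inverse Fourier transform of a radial tempered distribution is radial. I would mention this as a backup argument in the distributional sense.

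The main obstacle is not the symmetry computation but making sense of the convolution pointwise. $DQ_\rho$ behaves like $\overline{\rho}(|x|)/|x|$, which is locally integrable by \eqref{eqn:H0} and integrable at infinity by the condition $\int\min\{1,|x|^{-1}\}\rho<\infty$, so $DQ_\rho\in L^1(\R^n;\R^n)$. Young's inequality then gives $K_\rho\in L^1(\R^n)$, with the defining integral absolutely convergent for a.e.\ $x$. Radiality should therefore be understood a.e.: the change of variables shows that the set where the integral converges is itself $O(n)$-invariant, and on that set the identity holds pointwise. For a genuine radial representative one can take $\overline{K_\rho}(r):=K_\rho(re_1)$ wherever defined, or average $K_\rho$ over spheres; both agree with $K_\rho$ a.e.
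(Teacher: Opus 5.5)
Your symmetry computation is correct and is the paper's own proof. The paper likewise substitutes $z=Ry$ in the integral for $K_\rho(Rx)$ and uses that $R$ preserves lengths and inner products, so the integrand is unchanged. Your remarks that $|\det R|=1$ covers all of $O(n)$, and that transitivity on spheres produces a radial profile, only make explicit what the paper leaves implicit.

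The step that fails is the one you single out as the main obstacle. Hypothesis \eqref{eqn:H0} controls $\int_{B_1}\rho\,dx$ and $\int_{B_1^c}\rho(x)|x|^{-1}\,dx$. It does not control $\int_{B_1}\rho(x)|x|^{-1}\,dx$, which is exactly what $DQ_\rho\in L^1(\R^n;\R^n)$ requires. That integral is infinite for $\rho^s$, where $|DQ_{\rho^s}(x)|=c_{n,s}|x|^{-(n+s)}$. For $n=1$ it is infinite for every kernel satisfying \eqref{eqn:H0}, because $\inf_{\overline{B(0,\varepsilon)}}\rho>0$.

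So Young's inequality is not available, and the conclusion $K_\rho\in L^1(\R^n)$ is false in the cases of interest. The paper itself records $K_{\rho^s}=C_{n,s}|x|^{-n-2s}$. More generally, an integrable $K_\rho$ would have a bounded Fourier transform, whereas $4\pi^2|\xi|^2\widehat{Q_\rho}(\xi)^2$ grows like $|\xi|^{2s}$ under the hypotheses the paper later uses to place $K_\rho$ in $\mathcal{K}_s(\lambda,\Lambda)$. For $\rho^s$, the integral $\int_{\R^n} DQ_\rho(z)\cdot DQ_\rho(x-z)\,dz$ in fact diverges absolutely at every $x\neq0$. For fixed $x$ the integrand has size $|z|^{-(n+s)}$ near $z=0$, and only the oddness of $DQ_\rho$ produces cancellation. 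So there is no a.e.\ pointwise definition to fall back on.

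The repair is to promote your backup argument to the main one. Read $DQ_\rho*DQ_\rho$ distributionally; for compactly supported $\rho$, $Q_\rho$ is compactly supported, so this is a convolution of compactly supported distributions. Its Fourier transform is a constant multiple of $|\xi|^2\widehat{Q_\rho}(\xi)^2$, which is radial because $Q_\rho$ is. Since the Fourier transform commutes with $O(n)$, $K_\rho$ is a radial distribution.

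If you prefer to keep the change of variables, note that $z=Rw$ maps $\{|w|>\epsilon,\ |x-w|>\epsilon\}$ onto $\{|z|>\epsilon,\ |Rx-z|>\epsilon\}$. Hence the truncated integrals, and any principal-value limit wherever it exists, are rotation invariant without any absolute convergence.
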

	\begin{proof} We just show that $K_\rho$ is rotation invariant. Let $R\in O(n)$ a rotation. Then, 
		\begin{align*}
			K_\rho(Rx)&=-\int_{\R^n} \frac{z}{|z|} \rho(z) \cdot\frac{z-Rx}{|z-Rx|} \rho(z-Rx)\,dz\\
			& \left[ \begin{array}{c} z=Ry \\
				\det (R)=1\end{array}\right] =-\int_{\R^n} \frac{Ry}{|Ry|} \rho(z) \frac{Ry-Rx}{|Ry-Rx|} \cdot\rho(Ry-Rx)\,dy\\
			&=-\int_{\R^n} \frac{y}{|y|} \rho(z) \cdot\frac{y-x}{|y-x|} \rho(y-x)\,dy =K_\rho(x).
		\end{align*}
	\end{proof}



	\begin{Obses}
	\end{Obses}
	\begin{enumerate}
		\item  If we choose $\rho=\rho^s$, we have that $-\Delta_\rho=(-\Delta)^s$, the well known fractional Laplacian. Indeed, since $Q_{\rho^s}=I_{1-s}$ and $\widehat{I_{1-s}u}=|2\pi\xi|^{s-1}\widehat{u}(\xi),$  we have that 
		$$\widehat{-\Delta_{\rho^s}u}=4\pi^2 |\xi|^2 |2\pi\xi|^{2s-2}\widehat{u}(\xi)=(2\pi|\xi|)^{2s}\widehat{u}(\xi)=\widehat{(-\Delta)^su}.$$ Moreover, $$K_{\rho^s}(x)=C_{n,s}\frac{1}{|x|^{n+2s}},$$ where $C_{n,s}$ is a suitable normalization constant.
		\item If we consider a rescaled kernel $\rho_\delta$, we get that the $\rho_\delta$-Laplacian, i.e., the  $\rho$-Laplacian of finite horizon $\delta>0$, is given by the Fourier multiplier $$
		\widehat{\Delta_{\rho_\delta} u}(\xi)=-4\pi^2|\xi|^2\widehat{Q}_{\rho_\delta}(\xi)^2\widehat{u}(\xi)=-4\pi^2c^2_\delta\delta^{-2n}|\xi|^2\widehat{Q_{\rho}}^2(\delta\xi)\widehat{u}(\xi).$$ In particular, for the choice $c_\delta=\delta^{-n}$, 
		$$\widehat{\Delta_{\rho_\delta} u}(\xi)=-4\pi^2|\xi|^2\widehat{Q_{\rho}}^2(\delta\xi)\widehat{u}(\xi),$$ i.e., the symbol $m_{K_{\rho_\delta}}(\xi)$ of $\Delta_{\rho_\delta}$ is $m_{K_\rho}(\delta\xi)$. Moreover, by Lemma \ref{Loc1}, $\Delta_{\rho_\delta}u$ converges to $\Delta u$ in the $L^p$-norm as $\delta\to 0^+$.
	\end{enumerate}

	\subsection{Nonlocal linear Sobolev spaces}
Henceforth, we  assume that the kernel is nonnegative, 
\begin{equation}\label{eq:K positive} K_\rho(x)\ge 0 \text{  for any  }x\ne 0. 
\end{equation}
Our aim now is to relate the \( \rho \)-Laplacian to the nonlocal Sobolev
spaces introduced in Definition \ref{def-non-local-soblev}, in the particular
case \( p = 2 \). A key observation in this direction is that the kernel
\( K_\rho \) naturally induces a bilinear form associated with the
\( \rho \)-Laplacian, defined on
\( C_c^\infty(\mathbb{R}^n) \times C_c^\infty(\mathbb{R}^n) \) by
\begin{equation}\label{eq-inner-product}
	\langle u, v \rangle_{\rho}
	:= \int_{\mathbb{R}^n}\int_{\mathbb{R}^n}
	\bigl(u(x)-u(y)\bigr)\bigl(v(x)-v(y)\bigr)
	K_\rho(x-y)\,dy\,dx.
\end{equation}
Provided \eqref{eq:K positive} an application of Hölder's inequality
immediately yields
\[
|\langle u, v \rangle_\rho|
\leq
\langle u, u \rangle_\rho^{1/2}
\langle v, v \rangle_\rho^{1/2},
\]
whenever both seminorms \( \langle u, u \rangle_\rho \) and
\( \langle v, v \rangle_\rho \) are finite. This naturally motivates
the introduction of a family of Gagliardo--Sobolev type spaces associated
with the kernel \( K_\rho \).
	\begin{Defi}[Nonlocal Gagliardo Spaces] \label{def-nonlocal-sobolev}
		Let $\rho$ a general kernel satisfying \eqref{eqn:H0}. We define the \textit{Nonlocal Gagliardo space} associated to $\rho$ as $$W^{\rho,2}(\R^n)=\{u\in L^2(\R^n): [u]_{\rho,2}<\infty\},$$ where $$[u]_{\rho,2}:=\int_{\R^n}\int_{\R^n}|u(x)-u(y)|^2 K_{\rho}(x-y)\,dy\,dx=\langle u,u\rangle_\rho,$$ endowed with the norm $$\norm{u}_{W^{\rho,2}}:=\left(\norm{u}_{2}^2+[u]_{\rho,2}\right)^{1/2}.$$
	\end{Defi}
	
	\begin{Obse}
		Note that when we choose $\rho=\rho^s=|x|^{-(n+s-1)}$, the seminorm $[u]_{\rho,2}$ becomes $$[u]_{\rho,2}=\int_{\R^n}\int_{\R^n}\frac{|u(x)-u(y)|^2}{|x-y|^{n+2s}}\,dy\,dx,$$ which is the well known Gagliardo seminorm. Hence, from general nonlocal gradients we can recover the Sobolev–Slobodecki spaces $W^{s,2}(\R^n)$. It is a well known fact that $W^{s,2}(\R^n)=H^{s,2}(\R^n)$, i.e., $W^{\rho^s,2}(\R^n)=H^{\rho^s,2}(\R^n)$, and hence it is natural to wonder if such a result hold for more general kernels $\rho$. In order to check this we need a potential characterization of the space $H^{\rho,2}(\R^n)$.
	\end{Obse}
	\begin{Prop}[Potential characterization of linear nonlocal spaces]\label{Potential}
		Let $\rho$ satisfy \eqref{eqn:H0} and let $K_\rho$  satisfy \eqref{eq:K positive}. Then, 
		$$H^{\rho,2}(\R^n)=\{u\in \mathcal{S}'(\R^n): \langle\lambda_\rho\rangle\widehat{u}(\xi)\in L^2\},$$  where $\langle\lambda_\rho\rangle:=(1+|\lambda_\rho(\xi)|^2)^{1/2}$, with $\lambda_\rho(\xi)=-2\pi i \xi \widehat{Q_\rho}(\xi)$.
	\end{Prop}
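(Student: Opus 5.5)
We prove the two inclusions using Plancherel together with the Fourier representation $\widehat{D_\rho u}=\lambda_\rho\widehat u$. Set
\[
X:=\{u\in\mathcal S'(\R^n):\ \langle\lambda_\rho\rangle\widehat u\in L^2\},\qquad \norm{u}_X:=\norm{\langle\lambda_\rho\rangle\widehat u}_2 .
\]
The first step is to check that $X$ is a Hilbert space. Since $\langle\lambda_\rho\rangle\ge 1$, every $u\in X$ has $\widehat u\in L^2$, so $u\in L^2$, and $X$ is isometric to the weighted space $L^2(\langle\lambda_\rho\rangle^2d\xi)$. For $u\in C_c^\infty(\R^n)$, Plancherel and the identity $\widehat{D_\rho u}=\lambda_\rho\widehat u$ from the preliminaries give
\[
\norm{u}_2^2+\norm{D_\rho u}_2^2=\int_{\R^n}(1+|\lambda_\rho(\xi)|^2)|\widehat u(\xi)|^2\,d\xi=\norm{u}_X^2 .
\]
Hence $\norm{\cdot}_{\rho,2}$ and $\norm{\cdot}_X$ are equivalent on $C_c^\infty(\R^n)$; they are comparable within a factor $\sqrt2$.

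\textbf{Inclusion $H^{\rho,2}(\R^n)\subset X$.} We use the density characterization \eqref{def-non-local-soblev}. Take $u\in H^{\rho,2}(\R^n)$ and $u_k\in C_c^\infty(\R^n)$ with $u_k\to u$ in $\norm{\cdot}_{\rho,2}$. Then $(u_k)$ is Cauchy in $X$, so it converges in $X$ to some $w$. Convergence in $X$ implies $L^2$ convergence, so $w=u$, and therefore $u\in X$. Along the way, $D_\rho u=\lim D_\rho u_k=\mathfrak F^{-1}(\lambda_\rho\widehat u)$ in $L^2$, so the Fourier formula for $D_\rho$ persists on all of $H^{\rho,2}(\R^n)$.

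\textbf{Inclusion $X\subset H^{\rho,2}(\R^n)$.} This is the main obstacle: we must show that $C_c^\infty(\R^n)$ is dense in $X$. We work in the isometric picture $L^2(\langle\lambda_\rho\rangle^2d\xi)$ and use only measurability of $\lambda_\rho$, with no regularity of $\widehat{Q_\rho}$. Let $u\in X$.
\begin{itemize}
\item Truncate in frequency: set $\widehat{u_R}:=\widehat u\,\mathbf 1_{\{|\xi|<R,\ |\lambda_\rho(\xi)|<R\}}$. By dominated convergence, $u_R\to u$ in $X$.
\item Each $u_R$ has $\langle\lambda_\rho\rangle$ bounded by $\sqrt{1+R^2}$ on the support of $\widehat{u_R}$. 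On that set the $X$-norm is controlled by a multiple of the $L^2$-norm.
\end{itemize}
The difficulty is that smooth approximants of $u_R$ need not keep their Fourier support inside the truncation region. Since $\lambda_\rho$ need not be bounded, the $X$-norm is not controlled by the $L^2$-norm alone.

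To get around this, we plan to show that $\lambda_\rho$ is locally bounded, and in fact has at most linear growth at infinity. Indeed, $\lambda_\rho(\xi)=2\pi i\xi\widehat{Q_\rho}(\xi)$ is the Fourier transform of $DQ_\rho$. Splitting $\rho=\rho\mathbf 1_{B_1}+\rho\mathbf 1_{B_1^c}$, the part $\rho\mathbf 1_{B_1^c}/|x|$ lies in $L^1$ by \eqref{eqn:H0}, so its contribution to $\lambda_\rho$ is bounded. The part $\rho\mathbf 1_{B_1}$ is in $L^1$, so the corresponding $Q$ is in $L^1$ by \cite[Lemma 2.5]{BellidoMoraHidde2025}, and its contribution is $\le C|\xi|$. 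This gives
\[
|\lambda_\rho(\xi)|\le C(1+|\xi|),
\]
so that $\norm{v}_X\le C\norm{v}_{H^1}$.

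With this bound we can conclude. The truncated $u_R$ has compactly supported Fourier transform, hence $u_R\in H^1(\R^n)$. We approximate it in $H^1$ by $C_c^\infty$ functions (cutoff followed by mollification). These approximants converge in $X$, and therefore in $\norm{\cdot}_{\rho,2}$. Completeness of $H^{\rho,2}(\R^n)$ then gives $u\in H^{\rho,2}(\R^n)$.

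Finally, the positivity assumption \eqref{eq:K positive} is not used in this argument. It is relevant only to the subsequent identification of $X$ with $W^{\rho,2}(\R^n)$, where one checks that $[u]_{\rho,2}=\int|m_{K_\rho}||\widehat u|^2=\norm{D_\rho u}_2^2$, up to a factor of $2$.
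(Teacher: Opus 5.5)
Your proof is correct, and its core is exactly the paper's argument. The paper's proof is only the Plancherel identity $\norm{D_\rho u}_2^2=\int_{\R^n}|\lambda_\rho(\xi)|^2|\widehat u(\xi)|^2\,d\xi$. It leaves implicit both the extension of $\widehat{D_\rho u}=\lambda_\rho\widehat u$ beyond $C_c^\infty(\R^n)$ and the density of $C_c^\infty(\R^n)$ in the potential space.

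You supply these missing steps through frequency truncation and the growth bound $|\lambda_\rho(\xi)|\le C(1+|\xi|)$, which is the Fourier-side counterpart of Lemma~\ref{Inclusion}. You are also right that \eqref{eq:K positive} plays no role in this statement.
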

	\begin{proof}
	
	It is enough to notice that by Plancherel's identity, $$\int_{\R^n}|D_\rho u(x)|^2\,dx=\int_{\R^n}|\widehat{D_\rho u}(\xi)|^2\,d\xi=\int_{\R^n}|\lambda_\rho (\xi)|^2|\widehat{u}(\xi)|^2\,d\xi.$$ \end{proof} 
	\begin{Obse}
		Note that this is not an isolated property for the linear case. In fact, if we further assume that $\rho$ satisfies the properties \eqref{eqn:H1}, \eqref{eqn:H2}, by means of Milhin-H\"ormander's multiplier theorem it can be proven that $$H^{\rho,p}(\R^n)=\{u\in \mathcal{S}'(\R^n): \mathfrak{F}^{-1}\left(\langle\lambda_\rho\rangle\widehat{u}\right)\in L^p(\R^n)\},\,1<p<\infty.$$
	\end{Obse}
	Once we have characterized nonlocal Sobolev spaces as potential spaces we are ready to establish the equality between $W^{\rho,2}(\R^n)$ and $H^{\rho,2}(\R^n)$.
	\begin{Teor}\label{Igualdad}
		Let $\rho$ satisfy \eqref{eqn:H0} and let $K_\rho$  satisfy \eqref{eq:K positive}. Then, we have that $W^{\rho,2}(\R^n)=H^{\rho,2}(\R^n)$, with equivalence of the norms. 
	\end{Teor}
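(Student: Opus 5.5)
The natural route is through the Fourier side, using Proposition \ref{Potential}. The goal is an identity of the form
\[
[u]_{\rho,2}=2\int_{\R^n}|\lambda_\rho(\xi)|^2|\widehat u(\xi)|^2\,d\xi = 2\norm{D_\rho u}_2^2 ,
\]
valid for every $u\in L^2(\R^n)$, with both sides allowed to be $+\infty$. Once this holds, the two norms $\norm{u}_2^2+[u]_{\rho,2}$ and $(\norm{u}_2+\norm{D_\rho u}_2)^2$ are comparable, with constants depending only on the factor $2$. Hence $u\in L^2$ has $[u]_{\rho,2}<\infty$ if and only if $\langle\lambda_\rho\rangle\widehat u\in L^2$, which by Proposition \ref{Potential} means exactly $u\in H^{\rho,2}(\R^n)$.

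First, rewrite the seminorm. Substituting $y=x+h$ and using Tonelli, which is legitimate because $K_\rho\ge0$ by \eqref{eq:K positive},
\[
[u]_{\rho,2}=\int_{\R^n}K_\rho(h)\int_{\R^n}|u(x+h)-u(x)|^2\,dx\,dh=\int_{\R^n}K_\rho(h)\int_{\R^n}|e^{2\pi i h\cdot\xi}-1|^2|\widehat u(\xi)|^2\,d\xi\,dh .
\]
The second equality is Plancherel applied for each fixed $h$. Since $|e^{2\pi ih\cdot\xi}-1|^2=2(1-\cos(2\pi h\cdot\xi))$, another use of Tonelli gives
\[
[u]_{\rho,2}=2\int_{\R^n}\widetilde m(\xi)|\widehat u(\xi)|^2\,d\xi,\qquad \widetilde m(\xi):=\int_{\R^n}(1-\cos(2\pi h\cdot\xi))K_\rho(h)\,dh\in[0,\infty].
\]

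The remaining task is to prove $\widetilde m(\xi)=|\lambda_\rho(\xi)|^2=4\pi^2|\xi|^2\widehat{Q_\rho}(\xi)^2$, that is, $\widetilde m=-\widehat{K_\rho}$. The earlier Proposition only establishes $\widehat{K_\rho}=-|\lambda_\rho|^2$ in the sense of distributions, since $K_\rho$ need not be integrable at the origin. I would test the distributional identity $\Delta_\rho u=K_\rho*u$ against $u\in C_c^\infty$. Plancherel gives $-\int u\,\Delta_\rho u=\int|\lambda_\rho|^2|\widehat u|^2$, which is also $\norm{D_\rho u}_2^2$ by Lemma \ref{IBP}. For the other side, I would symmetrize $-\int\!\int u(x)u(y)K_\rho(x-y)$ into $\tfrac12\int\!\int|u(x)-u(y)|^2K_\rho(x-y)$. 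To make the formal principal-value cancellation rigorous, I would truncate, replacing $K_\rho$ by $K_\rho\chi_{\{|h|>\varepsilon\}}$, which is integrable away from the origin, and then let $\varepsilon\to0$ by monotone convergence. This yields
\[
[u]_{\rho,2}=2\norm{D_\rho u}_2^2\quad\text{for } u\in C_c^\infty ,
\]
which means $\int(\widetilde m-|\lambda_\rho|^2)|\widehat u|^2=0$ for all Schwartz-type $\widehat u$. Both $\widetilde m$ and $|\lambda_\rho|^2$ are nonnegative and locally integrable on this class, so polarization and density of $\{|\widehat u|^2\}$ give $\widetilde m=|\lambda_\rho|^2$ almost everywhere.

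I expect the main obstacle to be this last identification. It needs $K_\rho$ to satisfy an integrability condition such as $\int\min\{1,|h|^2\}K_\rho<\infty$, so that both the truncation limit and the symmetrization are justified. One way to get it: the computation for $u\in C_c^\infty$ already shows $\int K_\rho(h)\int|u(x+h)-u(x)|^2\,dx\,dh<\infty$, and choosing $u$ suitably yields the needed local integrability of $|h|^2K_\rho$. With $\widetilde m=|\lambda_\rho|^2$ in hand, the identity from the first step holds for all $u\in L^2$, and equality of the spaces with equivalent norms follows.
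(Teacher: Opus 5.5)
Your skeleton matches the paper's: Tonelli and Plancherel reduce $[u]_{\rho,2}$ to $2\int\widetilde m\,|\widehat u|^2\,d\xi$, everything then rests on $\widetilde m=|\lambda_\rho|^2$, and Proposition~\ref{Potential} concludes. The paper settles that identification in one line, $\int(1-\cos(2\pi\xi\cdot h))K_\rho(h)\,dh=\widehat{K_\rho}(0)-\widehat{K_\rho}(\xi)$. You are right that this is only formal: splitting the integral requires $K_\rho\in L^1$, and then $K_\rho\ge0$ together with $\widehat{K_\rho}(0)=0$ would force $K_\rho\equiv0$.

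Your replacement, however, does not close the gap; it moves it. Let $T:=\mathfrak{F}^{-1}(-|\lambda_\rho|^2)$ be the distributional kernel of $\Delta_\rho$, and set $\phi:=u*u(-\cdot)$. Then $\|D_\rho u\|_2^2=-\int u\,\Delta_\rho u=-\langle T,\phi\rangle$, whereas Tonelli gives $\tfrac12[u]_{\rho,2}=\int(\phi(0)-\phi(h))K_\rho(h)\,dh$. Your symmetrization step asserts that these agree, i.e.\ that $T$ acts by $\langle T,\phi\rangle=\int(\phi(h)-\phi(0))K_\rho(h)\,dh$. By Plancherel, that is literally the statement $\widetilde m=|\lambda_\rho|^2$ you set out to prove.

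Truncating to $K_\rho\chi_{\{|h|>\varepsilon\}}$ and using monotone convergence only shows that the truncated symmetrized integrals increase to $\tfrac12[u]_{\rho,2}$. Nothing in that step ties their limit to $T$. Nor is integrability of $\min\{1,|h|^2\}K_\rho$ the missing ingredient. Obtaining it from the very $C_c^\infty$ identity it is supposed to justify is circular, and you cannot borrow it from the paper either, since Lemma~\ref{lemma: int condition} is itself derived from this theorem.

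Two facts are actually needed. First, $T$ must coincide on $\R^n\setminus\{0\}$ with the function $K_\rho$. This is what the convolution formula for $K_\rho$ is meant to provide, and it needs an argument because $DQ_\rho=-\rho(x)x/|x|^2$ is in general not locally integrable.

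Second, and this is the real missing idea, $T$ must carry no extra part supported at the origin. A term such as $c\,\Delta\delta_0$ is invisible to $K_\rho$ on $\R^n\setminus\{0\}$ but shifts the symbol by a multiple of $|\xi|^2$, and no integrability property of $K_\rho$ can exclude it. What excludes it is that $|\lambda_\rho(\xi)|^2=4\pi^2|\xi|^2\widehat{Q_\rho}(\xi)^2=o(|\xi|^2)$. Indeed, for compactly supported $\rho$ one has $Q_\rho\in L^1$, so $\widehat{Q_\rho}$ is bounded and tends to $0$ at infinity.

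Concretely, take a radially decreasing cutoff $\theta$ with $\theta=1$ on $B_1$, and pair $T$ with $(1-\cos(2\pi\xi\cdot h))\bigl(\theta(h/R)-\theta(h/\varepsilon)\bigr)\in C_c^\infty(\R^n\setminus\{0\})$.
\begin{itemize}
\item By the first fact and monotone convergence, this pairing increases to $\widetilde m(\xi)$ as $\varepsilon\to0$, $R\to\infty$.
\item On the Fourier side, the $R$-term tends to $|\lambda_\rho(\xi)|^2$, by the approximate identity and $\lambda_\rho(0)=0$.
\item After the change of variables $\eta=\varepsilon\zeta$, the $\varepsilon$-term is bounded by $C|\xi|^2\int|\eta|^2\widehat{Q_\rho}(\eta/\varepsilon)^2S(\eta)\,d\eta\to0$, for some rapidly decreasing $S$.
\end{itemize}
This yields $\widetilde m=|\lambda_\rho|^2$ pointwise, with $\int\min\{1,|h|^2\}K_\rho<\infty$ as a by-product rather than an input. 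Your polarization and density step then becomes unnecessary. The remaining parts of your argument, namely validity for all $u\in L^2$ and comparison of the norms, are fine.
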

\begin{proof}
 Let $u\in \mathcal{S}'(\R^n)$. Then,
	\begin{align*}
		[u]_{\rho,2}&=\int_{\R^n}\int_{\R^n} |u(x+h)-u(x)|^2K_\rho(h)\,dh\,dx
		\\&=\int_{\R^n}\left(\int_{\R^n} |u(x+h)-u(x)|^2\,dx\right)K_\rho(h)\,dh\\
		&=\int_{\R^n} \norm{u(\cdot+h)-u(\cdot)}_2^2K_\rho(h)\,dh=\int_{\R^n} \norm{\mathfrak{F}\left((u(\cdot+h)-u(\cdot)\right)}_2^2K_\rho(h)\,dh\\
		&=\int_{\R^n} \int_{\R^n} |e^{2\pi i \xi\cdot h}-1|^2|\widehat{u}(\xi)|^2K_\rho(h)\,d\xi\,dh\\
		&=2\int_{\R^n} \int_{\R^n} (1-\cos (2\pi \xi\cdot h))|\widehat{u}(\xi)|^2 K_\rho(h)\,dh\,d\xi.
	\end{align*}
	By the radial symmetry of $K_\rho$ from Lemma \ref{Radial}, we have that we can compute the integral
	\begin{align*}\int_{\R^n} \left(1-e^{-2\pi i \xi\cdot h}\right) K_\rho(h)\,dh&=\int_{\R^n} (1-\cos(2\pi \xi\cdot h))K_\rho(h)\,dh \\
    &=\int_{\R^n}K_\rho(h)\,dh-\int_{\R^n}\cos(2\pi\xi\cdot h)K_\rho(h)\,dh\\
		&=\widehat{{K_\rho}}(0)-\widehat{{K_\rho}}(\xi)=-\widehat{{K_\rho}}(\xi),\end{align*} as minus the Fourier symbol $m_{K_\rho}$ of our operator $K_\rho$, i.e., $|\lambda_\rho (\xi)|^2$. Hence, \begin{align*}[u]_{\rho,2}&=2\int_{\R^n}|\lambda_\rho(\xi)|^2|\widehat{u}(\xi)|^2\,d\xi\implies \norm{u}_2^2+\frac{1}{2}[u]_{\rho,2}=\int_{\R^n}\left(1+|\lambda_\rho(\xi)|^2\right)|\widehat{u}(\xi)|^2\,d\xi\\
		&=\norm{\langle \lambda_\rho \rangle \widehat{u}}_2^2,\end{align*} so the spaces $H^{\rho,2}(\R^n)=W^{K,2}(\R^n)$ coincide with equivalence of the norms by Proposition \ref{Potential}. 
	\end{proof}
\begin{Obses}
\end{Obses}	
\begin{itemize}
    \item As a consequence of the preceding results, the bilinear form
\( \langle u, v \rangle_\rho \) is well defined for every
\( u, v \in H^{\rho,2}(\mathbb{R}^n) \).
\item In view of the last result, we could extend the definition of Nonlocal Gagliardo Spaces for general kernels to $1<p<\infty$ as the spaces of $L^p(\R^n)$ functions $u$ such as $$[u]_{\rho,p}:=\int_{\R^n\times \R^n}|u(x)-u(y)|^pK_{\rho}(x-y)|x-y|^{-(p-2)s}\,dx\,dy<\infty.$$ It would be interesting to study in a subsequent work the relantionship between $W^{\rho,p}(\R^n)$ and $H^{\rho,p}(\R^n)$.
\end{itemize}

We next establish the continuous
embedding
\[
W^{1,2}(\mathbb{R}^n) \hookrightarrow H^{\rho,2}(\mathbb{R}^n),
\]
which was originally proved in greater generality in
\cite[Proposition~3.5]{BellidoMoraHidde2025}; we include the proof here
for the sake of completeness.
	\begin{Lema}\label{Inclusion}
		Let $\rho$ satisfying \eqref{eqn:H0}. Then, there exists a positive constant $C=C(n,\rho)$ such that $$\norm{u}_{H^{\rho,2}(\R^n)}\leq C\norm{u}_{W^{1,2}(\R^n)},$$
        for any $u \in W^{1,2}(\R^n)$. 
	\end{Lema}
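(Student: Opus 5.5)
The plan is to prove the estimate first for $u\in C_c^\infty(\R^n)$ and then extend it by density, since $C_c^\infty(\R^n)$ is dense in $W^{1,2}(\R^n)$. Because $\norm{u}_{H^{\rho,2}}=\norm{u}_2+\norm{D_\rho u}_2$, only $\norm{D_\rho u}_2\le C\norm{u}_{W^{1,2}}$ needs proof. I will not use the representation $D_\rho u=Q_\rho*Du$ with Young's inequality, because $Q_\rho$ need not be in $L^1$ under \eqref{eqn:H0} alone: its tail behaves like $\int_{|x|}^\infty\overline\rho(t)t^{-1}dt$, and this is only integrable at infinity under extra decay. Instead I work directly with the defining integral and split it into a near part and a far part.

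Write $D_\rho u(x)=\int_{\R^n}\frac{u(x)-u(x+h)}{|h|}\frac{-h}{|h|}\rho(h)\,dh$ (substituting $y=x+h$), and split the domain into $|h|<1$ and $|h|\ge1$.

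\emph{Near part.} Use $u(x+h)-u(x)=\int_0^1 Du(x+th)\cdot h\,dt$. Then
$$\Big|\int_{B_1}\cdots\Big|\le \int_{B_1}\int_0^1|Du(x+th)|\,dt\,\rho(h)\,dh.$$
Minkowski's integral inequality and translation invariance of the $L^2$ norm give an $L^2$ norm of at most $\norm{Du}_2\int_{B_1}\rho$. This is finite by \eqref{eqn:H0}, since $\min\{1,|h|^{-1}\}=1$ on $B_1$.

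\emph{Far part.} Bound it pointwise by $\int_{B_1^c}(|u(x)|+|u(x+h)|)|h|^{-1}\rho(h)\,dh$. Minkowski again gives an $L^2$ norm of at most $2\norm{u}_2\int_{B_1^c}|h|^{-1}\rho(h)\,dh$, which is finite by \eqref{eqn:H0}.

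Adding the two parts gives
$$\norm{D_\rho u}_2\le C\norm{u}_{W^{1,2}},\qquad C=\int_{\R^n}\min\{1,|h|^{-1}\}\rho(h)\,dh\cdot\max\{1,2\},$$
so $C$ depends only on $n$ and $\rho$.

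For the density step, take $u\in W^{1,2}$ and $u_k\in C_c^\infty$ with $u_k\to u$ in $W^{1,2}$. The estimate shows $(u_k)$ is Cauchy in $H^{\rho,2}$. Its limit is $u$, because both norms control $L^2$. By \eqref{def-non-local-soblev}, $u\in H^{\rho,2}(\R^n)$ and the inequality passes to the limit.

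The main point needing care is the use of Minkowski's inequality: it needs measurability and absolute convergence of the iterated integrals. For smooth compactly supported $u$ this is clear from the bounds above. Note that the argument never uses \eqref{eq:K positive}. Alternatively, one can argue via Plancherel: bound $|\lambda_\rho(\xi)|\le C(1+|\xi|)$ using the explicit formula for $\widehat{Q_\rho}$ and $|\sin t|\le\min\{1,|t|\}$. The direct argument above avoids that computation.
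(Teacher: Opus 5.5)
Your proof is correct and follows essentially the same route as the paper: you split $D_\rho u$ into the regions $|h|<1$ and $|h|\ge 1$, apply Minkowski's integral inequality with the bound $\|u(\cdot+h)-u\|_2\le |h|\,\|Du\|_2$ near the origin and the triangle inequality far away, and control the constant by $\int_{\mathbb{R}^n}\min\{1,|h|^{-1}\}\rho(h)\,dh$. Your density step (proving the estimate on $C_c^\infty$ and passing to the limit) is a small addition that makes rigorous a point the paper leaves implicit, since the paper's proof applies the integral formula directly to $u\in W^{1,2}(\mathbb{R}^n)$.
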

	\begin{proof}Let $u\in W^{1,2}(\R^n)$. Then, $$\norm{D_\rho u}_2\leq \norm{\int_{B_1}\frac{|u(\cdot+h)-u(\cdot)}{|h|}\rho(h)\,dh}_2+\norm{\int_{B_1^c}\frac{|u(\cdot+h)-u(\cdot)}{|h|}\rho(h)\,dh}_2.$$ By Minkowski's integral inequality, \begin{align*}
		\norm{\int_{B_1}\frac{|u(\cdot+h)-u(\cdot)|}{|h|}\rho(h)\,dh}_2\leq \int_{B_1}\frac{\rho(h)}{|h|}\norm{u(\cdot+h)-u(\cdot)|}_2\,dh\leq C\norm{D u}_2\int_{B_1}\rho(h)\,dh.
	\end{align*} For the other integral we can use Fubini's to get that \begin{align*}
		&\norm{\int_{B_1^c}\frac{|u(\cdot+h)-u(\cdot)|}{|h|}\rho(h)\,dh}_2\leq \norm{\int_{B_1^c}\frac{|u(\cdot+h)|}{|h|}\rho(h)\,dh}_2+\norm{\int_{B_1^c}\frac{|u(\cdot)|}{|h|}\rho(h)\,dh}_2\\
		& \leq 2\norm{u}_p\int_{B_1^c}\rho(h)|h|^{-1}\,dh.
	\end{align*}Puting all togheter we get that $$\norm{D_\rho u}\leq C\norm{u}_{W^{1,2}}\int_{\R^n}\min\{1,|h|^{-1}\}\rho(h)\,dh\leq C\norm{u}_{W^{1,2}},$$ by the hypothesis on $\rho$. 
\end{proof}

\subsection{Relationship with integro-differential equations}
The relationship between our family of kernels $K_\rho$ derived from general nonlocal kernels $\rho$, and the class $\mathcal{K}_s(\lambda,\Lambda)$ comes from the, although simple, important observation that $-\Delta_\rho$ is an integro-differential operator in the sense of \cite{FernandezRos}.
	
	\begin{Prop} \label{prop-rho-laplaciano} For any $u\in\mathcal{S}$, the following identity holds
		\begin{equation}\label{eq-non-local-laplacian}
			\mathcal{L}_\rho u(x)=\int_{\R^n} (u(x)-u(x+h)) K_\rho(h)\,dh=-\Delta_\rho u(x).
		\end{equation}
	\end{Prop}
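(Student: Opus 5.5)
We argue on the Fourier side, in the spirit of the proof of Theorem \ref{Igualdad}, and then upgrade the $L^2$ identity to a pointwise one. Fix $u\in\mathcal{S}$ and write $\mathcal{L}_\rho u$ in symmetrized form: $K_\rho$ is radial by Lemma \ref{Radial}, hence even, so the substitution $h\mapsto -h$ gives
\[
\mathcal{L}_\rho u(x)=\frac12\int_{\R^n}\bigl(2u(x)-u(x+h)-u(x-h)\bigr)K_\rho(h)\,dh .
\]
The second difference is bounded by $\min\{4\|u\|_\infty,\|D^2u\|_\infty|h|^2\}$. Provided $\int\min\{1,|h|^2\}K_\rho(h)\,dh<\infty$, this integral converges absolutely and uniformly in $x$. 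The function $\mathcal{L}_\rho u$ is then bounded and continuous.

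\textbf{Step 1: integrability of $K_\rho$.} I would derive the integrability condition from the symbol. By the computation in the proof of Theorem \ref{Igualdad}, $\int(1-\cos(2\pi\xi\cdot h))K_\rho(h)\,dh=|\lambda_\rho(\xi)|^2$. Here $|\lambda_\rho(\xi)|\le C|\xi|$ for small $\xi$, which follows from $|\widehat{Q_\rho}|\lesssim\int\min\{1,|x|^{-1}\}\rho$ after splitting $Q_\rho$ as in Lemma \ref{Inclusion}. Averaging over $\xi$ in an annulus and using $K_\rho\ge0$ from \eqref{eq:K positive} gives the standard bound $\int\min\{1,|h|^2\}K_\rho\lesssim\sup_{|\xi|\le R}|\lambda_\rho(\xi)|^2<\infty$. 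This is the same reasoning as in \cite[Proposition 2.2.1]{FernandezRos}.

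\textbf{Step 2: identification of the symbol.} Take the Fourier transform in $x$ and exchange integrals; Fubini is justified by Step 1 and $u\in\mathcal{S}$. This yields
\[
\widehat{\mathcal{L}_\rho u}(\xi)=\widehat u(\xi)\int_{\R^n}\bigl(1-e^{2\pi i\xi\cdot h}\bigr)K_\rho(h)\,dh=\widehat u(\xi)\int_{\R^n}(1-\cos(2\pi\xi\cdot h))K_\rho(h)\,dh ,
\]
where the sine part vanishes by evenness. By the proof of Theorem \ref{Igualdad}, the last factor equals $-m_{K_\rho}(\xi)=4\pi^2|\xi|^2\widehat{Q_\rho}(\xi)^2$. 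By the first Proposition of this section, this is exactly the multiplier of $-\Delta_\rho$. Hence $\widehat{\mathcal{L}_\rho u}=\widehat{-\Delta_\rho u}$ as tempered distributions.

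\textbf{Step 3: pointwise equality.} Both sides are continuous functions. For $\mathcal{L}_\rho u$ this was shown above. For $\Delta_\rho u=\operatorname{div}(Q_\rho*D_\rho u)$, continuity follows from $D_\rho u\in L^1\cap L^\infty$ together with the convolution structure. Since the two functions have the same Fourier transform and are continuous, they agree everywhere, which proves \eqref{eq-non-local-laplacian}.

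The main obstacle is the rigorous justification of $\int(1-\cos)K_\rho=-\widehat{K_\rho}$. The proof of Theorem \ref{Igualdad} writes this as $\widehat{K_\rho}(0)-\widehat{K_\rho}(\xi)$ with $\widehat{K_\rho}(0)=0$, which implicitly assumes $K_\rho\in L^1$; this fails in general, since $K_\rho\sim|h|^{-n-2s}$ for fractional-type $\rho$. To avoid this, I would truncate. Set $K^\varepsilon=K_\rho\chi_{|h|>\varepsilon}\in L^1$ and compute the limit $\varepsilon\to0$ directly from $K_\rho=-DQ_\rho*DQ_\rho$. An alternative is to approximate $\rho$ by integrable kernels $\rho_k\uparrow\rho$ and pass to the limit by monotone convergence, using $K_\rho\ge0$. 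I expect this limiting argument to be the delicate point.
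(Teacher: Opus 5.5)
Your route is the paper's: Step 2 is exactly its proof, which writes $\int(1-e^{-2\pi i\xi\cdot h})K_\rho(h)\,dh=\widehat{K_\rho}(0)-\widehat{K_\rho}(\xi)$ and sets $\widehat{K_\rho}(0)=0$. You are right that this is only formal. In fact it is never legitimate under \eqref{eq:K positive}: if $K_\rho$ were integrable with $\widehat{K_\rho}=-|\lambda_\rho|^2$, then $\int K_\rho=\widehat{K_\rho}(0)=0$ would force $K_\rho\equiv0$.

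But your proposal leaves this step open, and it is the whole content of the statement. Given integrability, $\int(1-\cos(2\pi\xi\cdot h))K_\rho(h)\,dh$ is by definition the symbol of $\mathcal{L}_\rho$. So the identity you defer is equivalent to the proposition, and Steps 2--3 merely transport it to physical space.

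Step 1 is circular. It derives $\int\min\{1,|h|^2\}K_\rho<\infty$ from that very identity, while your truncation $K^\varepsilon\in L^1$ presupposes this integrability. Evaluating $\lim_{\varepsilon\to0}(\widehat{K^\varepsilon}(0)-\widehat{K^\varepsilon}(\xi))$ is a restatement of the claim, not a proof. Doing it literally from $K_\rho=-DQ_\rho*DQ_\rho$ would even give the wrong sign: that convolution has transform $+4\pi^2|\xi|^2\widehat{Q_\rho}(\xi)^2$. The kernel matching $m_{K_\rho}=-|\lambda_\rho|^2$, and positive for $\rho=\rho^s$, is $\sum_j\partial_jQ_\rho*\partial_jQ_\rho$.

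The monotone alternative fails. For the approximants to be useful, the formal computation must be valid for them, e.g.\ $\rho_k/|\cdot|\in L^1$. But then $DQ_{\rho_k}\in L^1$ is odd, so $\int K_{\rho_k}=0$ and $K_{\rho_k}$ necessarily changes sign. Moreover $\rho\mapsto K_\rho$ is quadratic with the sign-indefinite weight $z\cdot(x-z)$, so $\rho_k\uparrow\rho$ gives no monotonicity of $K_{\rho_k}$.

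The missing idea is to work with the tempered distribution $T:=\mathfrak{F}^{-1}(-|\lambda_\rho|^2)$, the true convolution kernel of $\Delta_\rho$. Here $K_\rho$ is the restriction of $T$ to $\R^n\setminus\{0\}$; this is what the convolution formula for $K_\rho$ has to provide.

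First, obtain integrability without the identity. Test $T$ against nonnegative functions supported away from $0$ that increase to $|h|^2e^{-\pi|h|^2}$ (cutoff at scale $\varepsilon\to0$) and to a function equal to $1$ off $B_1$ (cutoff at scale $R\to\infty$). These pairings stay bounded by scaling and the bound $|\lambda_\rho(\xi)|\le\int\rho(x)\min\{|x|^{-1},2\pi|\xi|\}\,dx\lesssim1+|\xi|$, which comes from the sine formula. Then $K_\rho\ge0$ and monotone convergence give $\int\min\{1,|h|^2\}K_\rho<\infty$.

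Next, $T$ minus the distribution $\varphi\mapsto\frac12\int(\varphi(h)+\varphi(-h)-2\varphi(0))K_\rho(h)\,dh$ is supported at the origin. Hence $P(\xi):=\int(1-\cos(2\pi\xi\cdot h))K_\rho(h)\,dh-|\lambda_\rho(\xi)|^2$ is a polynomial. It is radial, since both terms are, and of degree at most $2$, since both are $O(1+|\xi|^2)$; so $P=a+b|\xi|^2$. Both terms tend to $0$ as $\xi\to0$, so $a=0$. Both are $o(|\xi|^2)$ as $|\xi|\to\infty$, so $b=0$: use dominated convergence for the first, and $\widehat{Q_\rho}(\xi)\to0$ (by dominated convergence in the sine formula) for the second.

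With this, your Steps 2 and 3 go through. For the continuity of $\Delta_\rho u$, it is simplest to note that $\widehat{\Delta_\rho u}=-|\lambda_\rho|^2\widehat u\in L^1$.
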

	\begin{proof} It is enough to show that 
		\[\widehat{{\mathcal{L}_\rho} {u}}(\xi)=4\pi^2|\xi|^2\widehat{Q}_\rho(\xi)^2 \widehat{u}(\xi),\]
		for any $u\in \mathcal{S}$. But this is straightforward,
		\begin{align*}
			\widehat{{\mathcal{L}_\rho} {u}}(\xi)&= \int_{\R^n}(1-e^{-2\pi i \xi\cdot h})\widehat{u}(\xi)K_\rho(h)\,dh\\
			&= \left(\int_{\R^n}(1-e^{-2\pi i \xi\cdot h})K_\rho(h)\,dh\right) \widehat{u}(\xi)\\
			&= (\widehat{K_\rho}(0)-\widehat{K_\rho}(\xi))\widehat{u}(\xi)=-\widehat{K_\rho}(\xi)\widehat{u}(\xi)\\
			&= 4\pi^2 |\xi|^2 \widehat{Q}_\rho(\xi)^2 \widehat{u}(\xi).
		\end{align*}
	\end{proof}

	As a consequence of this embedding and the equivalence between the spaces from Theorem \ref{Igualdad}, we get that our nonlocal operators $K_\rho$ satisfies the integrability condition of the class $\mathcal{K}_s(\lambda,\Lambda)$ \eqref{eq:integrabilityK}, i.e., 
	\[
		\int_{\R^n}\min\{1,|y|^2\}K_\rho (y)\,dy<\infty,
	\]
	as a direct consequence of the theory developed in \cite{Foghem2025}.
	\begin{Lema}
		\label{lemma: int condition}
		Let $\rho$ satisfy \eqref{eqn:H0} and let $K_\rho$  satisfy \eqref{eq:K positive}. Then, the operator $K_\rho$ verifies the condition \eqref{eq:integrabilityK}.
	\end{Lema}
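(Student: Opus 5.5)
The plan is to exploit the identity from the proof of Theorem \ref{Igualdad}: for $u\in W^{\rho,2}(\R^n)$,
\[
[u]_{\rho,2}=\int_{\R^n}\|u(\cdot+h)-u\|_2^2\,K_\rho(h)\,dh=2\int_{\R^n}|\lambda_\rho(\xi)|^2|\widehat u(\xi)|^2\,d\xi .
\]
Combined with Lemma \ref{Inclusion}, this gives $[u]_{\rho,2}\le C\|u\|_{W^{1,2}}^2$ for every $u\in W^{1,2}(\R^n)$. We then test this inequality with a well-chosen fixed function and read off integrability of $K_\rho$ against $\min\{1,|h|^2\}$. The nonnegativity \eqref{eq:K positive} is what allows Tonelli to be used freely.

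Fix a nonzero $\varphi\in C_c^\infty(\R^n)$. The map $h\mapsto \|\varphi(\cdot+h)-\varphi\|_2^2$ is continuous and radial-averaged; we work with its spherical average, since $K_\rho$ is radial by Lemma \ref{Radial}. We need two comparisons.

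\textbf{Small $h$.} There is $c>0$ with
\[
\int_{\mathbb S^{n-1}}\|\varphi(\cdot+r\theta)-\varphi\|_2^2\,d\theta\ \ge\ c\,r^2\quad\text{for }r\le 1 .
\]
By Plancherel this average equals $\int 2(1-\cos(2\pi r\theta\cdot\xi))|\widehat\varphi|^2$, averaged over $\theta$. Using $1-\cos t\ge c t^2$ for $|t|\le\pi$ and restricting to $|\xi|\le 1/2$, where $\widehat\varphi\neq0$ on a set of positive measure, gives a lower bound of order $r^2\int_{|\xi|\le 1/2}|\xi|^2|\widehat\varphi|^2>0$. To be safe, pick $\varphi$ with $\widehat\varphi(0)=\int\varphi\ne0$.

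\textbf{Large $h$.} For $|h|$ larger than the diameter of $\operatorname{supp}\varphi$, the supports of $\varphi(\cdot+h)$ and $\varphi$ are disjoint, so $\|\varphi(\cdot+h)-\varphi\|_2^2=2\|\varphi\|_2^2$. On the intermediate annulus $1\le|h|\le R$ we use the same Plancherel argument: $\int_{\mathbb S^{n-1}}(1-\cos(2\pi r\theta\cdot\xi))\,d\theta$ is a continuous function of $(r,\xi)$ that is positive whenever $\xi\ne0$. By compactness the average is bounded below by a positive constant on $[1,R]$.

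Putting these together, and writing the integral in polar coordinates using the radiality of $K_\rho$,
\[
\int_{\R^n}\min\{1,|h|^2\}K_\rho(h)\,dh\ \le\ C'\int_{\R^n}\|\varphi(\cdot+h)-\varphi\|_2^2K_\rho(h)\,dh=C'[\varphi]_{\rho,2}\le C''\|\varphi\|_{W^{1,2}}^2<\infty .
\]
The main obstacle is the uniform lower bound of the spherical average by $c\min\{1,r^2\}$, especially on the intermediate range and as $r\to0$. If the direct estimate proves awkward, an alternative is to replace $\varphi$ by the translates-averaged Gaussian $e^{-\pi|x|^2}$. Its difference norm is explicit, $\|\varphi(\cdot+h)-\varphi\|_2^2=2\|\varphi\|_2^2\bigl(1-e^{-\pi|h|^2/2}\bigr)$, which is comparable to $\min\{1,|h|^2\}$. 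This gives the bound at once, with no radiality or compactness argument needed.
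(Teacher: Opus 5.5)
Your argument is correct, but it handles the decisive step differently from the paper. The paper combines Lemma \ref{Inclusion} and Theorem \ref{Igualdad} into the embedding $W^{1,2}(\R^n)\hookrightarrow W^{\rho,2}(\R^n)$. It then invokes \cite[Theorem 4.4]{Foghem2025}, which says this embedding is equivalent to \eqref{eq:integrabilityK}, as a black box.

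You use the same two inputs but prove the needed implication yourself. Since $K_\rho\ge 0$, Tonelli gives $[\varphi]_{\rho,2}=\int_{\R^n}\|\varphi(\cdot+h)-\varphi\|_2^2K_\rho(h)\,dh$. A single test function with $\|\varphi(\cdot+h)-\varphi\|_2^2\ge c\min\{1,|h|^2\}$ then turns finiteness of $[\varphi]_{\rho,2}=2\|D_\rho\varphi\|_2^2$ into \eqref{eq:integrabilityK}.

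Your version makes the lemma self-contained and avoids the unspecified ``adaptation of notation'' behind the citation. It also shows that only the easy direction of Foghem's equivalence, applied to one function, is really needed. The paper's route is shorter and places the result inside a general characterization.

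Of your two constructions, keep the Gaussian one. The identity $\|\varphi(\cdot+h)-\varphi\|_2^2=2\|\varphi\|_2^2\bigl(1-e^{-\pi|h|^2/2}\bigr)$ is correct. It gives the lower bound for every $h$ and every $n$, with no radiality or compactness argument.

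In the compactly supported version there is a slip. You claim $\int_{\mathbb S^{n-1}}(1-\cos(2\pi r\theta\cdot\xi))\,d\theta>0$ for all $\xi\neq 0$. This fails when $n=1$: there $\mathbb S^0=\{\pm 1\}$, and the expression vanishes whenever $r\xi\in\mathbb Z$. A simpler replacement is that $h\mapsto\|\varphi(\cdot+h)-\varphi\|_2^2$ is continuous and strictly positive on the compact set $\{1\le|h|\le R\}$, since a nonzero compactly supported function is not invariant under any nonzero translation.
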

	\begin{proof}
		 By slightly adapting the notation, by \cite[Theorem 4.4]{Foghem2025} we have that $K_\rho$ satisfies the condition \ref{eq:integrabilityK} if and only if the continuous embedding $W^{1,2}(\R^n)\xhookrightarrow{}W^{\rho,2}(\R^n)$ holds. Hence, the result follows from Theorem \ref{Igualdad} and Lemma \ref{Inclusion}. 
	\end{proof}
	
	In view of this result, we can easily adapt the proof of \cite[Lemma 2.2.16]{FernandezRos} combined with the integration by parts formula for $D_\rho$ from Lemma \ref{IBP} to obtain the following integration by parts formula for $\mathcal{L}_\rho$:
	\begin{Lema} \label{lemma-inner-produc-formu}
		Let $\rho$ satisfy \eqref{eqn:H0} and let $K_\rho$  satisfy \eqref{eq:K positive}. Then, for every $u,v\in H^{\rho,2}(\R^n)$ we have that $$\langle D_\rho u,D_\rho v\rangle_{L^2(\R^n)}=\int_{\R^n}u(\mathcal{L}_\rho v)\,dx=\langle u,v\rangle_{\rho}=\int_{\R^n}(\mathcal{L}_\rho u)v\,dx.$$
	\end{Lema}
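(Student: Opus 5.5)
The plan is to prove all three identities first for $u,v\in C_c^\infty(\R^n)$ and then pass to $H^{\rho,2}(\R^n)$ by density, using \eqref{def-non-local-soblev}. On $C_c^\infty$ the argument is algebraic. The first equality comes from Lemma \ref{IBP} applied with the vector field $D_\rho v$, combined with Proposition \ref{prop-rho-laplaciano}:
\[
\langle D_\rho u,D_\rho v\rangle_{L^2}=-\int_{\R^n}u\,\operatorname{div}_\rho(D_\rho v)\,dx=-\int_{\R^n}u\,\Delta_\rho v\,dx=\int_{\R^n}u\,\mathcal{L}_\rho v\,dx.
\]
Since $D_\rho v$ is not compactly supported, Lemma \ref{IBP} does not apply verbatim. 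To justify this step I would use Plancherel instead. By Proposition \ref{Potential} and the multiplier formula for $\Delta_\rho$, both sides equal
\[
\int_{\R^n}|\lambda_\rho(\xi)|^2\,\widehat u(\xi)\overline{\widehat v(\xi)}\,d\xi.
\]
This quantity is real and symmetric in $u,v$, because $|\lambda_\rho|^2=4\pi^2|\xi|^2\widehat{Q_\rho}^2$ is real and even. Symmetry then gives the last equality $\int u\,\mathcal{L}_\rho v=\int (\mathcal{L}_\rho u)\,v$ on $C_c^\infty$.

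For the middle identity I would follow \cite[Lemma 2.2.16]{FernandezRos}. Polarizing the computation in the proof of Theorem \ref{Igualdad} gives
\[
\langle u,v\rangle_\rho=2\int_{\R^n}|\lambda_\rho|^2\,\widehat u\,\overline{\widehat v}\,d\xi.
\]
This shows the claimed identity holds up to the factor $2$ coming from the normalization in \eqref{eq-inner-product}. Alternatively, one can argue directly by symmetrizing. Fix $\varepsilon>0$, work with $K_\rho\chi_{\{|h|>\varepsilon\}}$, expand $(u(x)-u(y))(v(x)-v(y))$, and exchange $x\leftrightarrow y$ using the evenness of $K_\rho$ (Lemma \ref{Radial}). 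This gives
\[
\iint_{|x-y|>\varepsilon}(u(x)-u(y))(v(x)-v(y))K_\rho(x-y)\,dy\,dx=2\int_{\R^n}u(x)\int_{|h|>\varepsilon}(v(x)-v(x+h))K_\rho(h)\,dh\,dx.
\]
Letting $\varepsilon\to0$ then requires \eqref{eq:K positive} and Lemma \ref{lemma: int condition}. For $v\in C_c^\infty$, the second-order Taylor bound $|2v(x)-v(x+h)-v(x-h)|\le \|D^2v\|_\infty|h|^2$ together with $\int\min\{1,|h|^2\}K_\rho<\infty$ gives dominated convergence of the inner principal value integral to $\mathcal{L}_\rho v(x)$. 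The left-hand side converges by monotone/dominated convergence because $[u]_{\rho,2},[v]_{\rho,2}<\infty$. I would therefore state the result with the factor-$2$ convention made consistent, i.e.\ read $\langle\cdot,\cdot\rangle_\rho$ as $\tfrac12$ of \eqref{eq-inner-product} or equivalently absorb it into $K_\rho$. I expect this to be the main point needing care.

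Finally I would extend to $u,v\in H^{\rho,2}(\R^n)$. The left side and $\langle u,v\rangle_\rho$ are continuous bilinear forms on $H^{\rho,2}$: the first by Cauchy--Schwarz in $L^2$, the second by Theorem \ref{Igualdad} and the Cauchy--Schwarz bound stated before Definition \ref{def-nonlocal-sobolev}. The terms $\int u\,\mathcal{L}_\rho v$ should be interpreted as the duality pairing $\langle \mathcal{L}_\rho v,u\rangle_{(H^{\rho,2})^*,H^{\rho,2}}$. This pairing is well defined since $\mathcal{L}_\rho v$ has Fourier transform $|\lambda_\rho|^2\widehat v$, and
\[
|\lambda_\rho|^2|\widehat v\,\widehat u|\le (|\lambda_\rho||\widehat v|)(|\lambda_\rho||\widehat u|)\in L^1
\]
by Proposition \ref{Potential}. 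Approximating $u,v$ by $C_c^\infty$ functions in $\|\cdot\|_{\rho,2}$, every term passes to the limit, which concludes the proof.
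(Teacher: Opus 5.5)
Your proposal is correct and follows the route the paper indicates: integration by parts for the first equality, an adaptation of the symmetrization in \cite[Lemma 2.2.16]{FernandezRos} for the bilinear form, and density. You add real care beyond the paper's sketch, namely replacing Lemma \ref{IBP} by Plancherel because $D_\rho v$ need not be compactly supported, and reading $\int u\,\mathcal{L}_\rho v$ as a duality pairing on $H^{\rho,2}(\R^n)$. Your factor-$2$ caveat is also justified. Since \eqref{eq-inner-product} omits the $\tfrac12$ used in \cite{FernandezRos}, the paper's own computation in Theorem \ref{Igualdad} gives $\langle u,v\rangle_\rho=2\langle D_\rho u,D_\rho v\rangle_{L^2(\R^n)}=2\int_{\R^n}u\,\mathcal{L}_\rho v\,dx$, so the middle equality holds only after that renormalization.
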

	
In order to compare the Fourier symbol of the nonlocal operator \( K_\rho \)
with that of the fractional Laplacian, additional conditions on the kernel
\( \rho \) are required. Specifically, we need the nonlocal kernel \( \rho \)
to be controlled near the origin by two purely fractional kernels, which is
encoded in hypotheses \eqref{eqn:H3}--\eqref{eqn:H4}. We further require
the positivity of \( \widehat{Q}_\rho \), guaranteed by hypothesis
\eqref{eqn:H1}. Under these conditions, the following estimates on
\( \widehat{Q}_\rho(\xi) \) hold (see \cite[Lemma~2.6]{CuetoHidde2025}).
	\begin{Lema}[Estimates on $\widehat{Q}_\rho$]
		Let $\rho$ satisfying \eqref{eqn:H0} and \eqref{eqn:H1} and let $K_\rho$ satisfying \eqref{eq:K positive}. Then, there exists a positive constant $C=C(n,\rho)$ such that $$\frac{1}{C}\frac{\overline{\rho}(1/|\xi|)}{|\xi|^n}\leq \widehat{Q}_\rho(\xi)\leq C\frac{\overline{\rho}(1/|\xi|)}{|\xi|^n},\,|\xi|\geq 1/\varepsilon.$$
	\end{Lema}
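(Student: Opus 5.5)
Since $\rho$ is radial, I would first reduce to $\xi=re_1$ with $r=|\xi|\ge 1/\varepsilon$. The formula from the preliminaries gives
\[
\widehat{Q}_\rho(\xi)=\frac{1}{2\pi r}\int_{\R^n}\frac{\rho(x)x_1}{|x|^2}\sin(2\pi r x_1)\,dx .
\]
Passing to polar coordinates and integrating out the angular variables leaves a one-dimensional oscillatory integral:
\[
\widehat{Q}_\rho(\xi)=\frac{c_n}{r}\int_0^\infty t^{n-2}\overline{\rho}(t)\,\Phi_n(2\pi r t)\,dt=\frac{c_n}{r}\int_0^\infty f_\rho(t)\,\Phi_n(2\pi r t)\,dt .
\]
Here $\Phi_n(\tau)=\int_{\mathbb{S}^{n-1}}\omega_1\sin(\tau\omega_1)\,d\sigma(\omega)$ is a Bessel-type function; it behaves like $c\,\tau$ as $\tau\to0$ and decays like $\tau^{-(n-1)/2}$ with oscillation. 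It is also the derivative of $-\int_{\mathbb{S}^{n-1}}\cos(\tau\omega_1)\,d\sigma$, which I would exploit by integrating by parts against the monotone function $f_\rho$.

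Next I would split the $t$-integral at $t=1/r$, which lies in $(0,\varepsilon]$.

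On $(0,1/r)$, $\Phi_n(2\pi rt)\approx c\,rt\ge0$. By \eqref{eqn:H1}, $t^\nu f_\rho(t)$ is decreasing, so $f_\rho(t)\ge (1/(rt))^{\nu}f_\rho(1/r)$ there; and $f_\rho$ itself is decreasing, so $f_\rho(t)\ge f_\rho(1/r)$. The small-$t$ piece is therefore bounded below by $c\,f_\rho(1/r)\int_0^{1/r} rt\,dt\sim f_\rho(1/r)/r$. For the upper bound of this piece I would use $f_\rho(t)\le (rt)^{-\nu}f_\rho(1/r)$ with $\nu<2$, or reduce to that case by shrinking $\nu$, since decreasing of $t^\nu f_\rho$ persists for smaller $\nu$ because $f_\rho$ is decreasing. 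This gives $\lesssim f_\rho(1/r)/r$. Multiplying by $c_n/r$ yields the order $f_\rho(1/r)/r^2=\overline{\rho}(1/r)\,r^{2-n}/r^2=\overline{\rho}(1/r)/r^n$, which is exactly the claimed scale.

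On $(1/r,\infty)$ I would integrate by parts in $t$, or use the second mean value theorem, against the decreasing function $f_\rho$ on $(1/r,\varepsilon)$. The primitive of $\Phi_n(2\pi rt)$ in $t$ is $-(2\pi r)^{-1}\Psi_n(2\pi rt)$ with $\Psi_n$ bounded, so this piece is bounded by $C f_\rho(1/r)/r$, which gives the upper bound. The tail beyond $\varepsilon$ is controlled by \eqref{eqn:H0} integrability (or vanishes by compact support) and is lower order. Combining the pieces gives the upper bound.

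The main obstacle is the lower bound, because the oscillatory part on $(1/r,\infty)$ could in principle cancel the positive main term. I would handle it by using the positivity of $\widehat Q_\rho$ under \eqref{eqn:H1}: writing $\widehat Q_\rho$ as a superposition, with nonnegative weights $-df_\rho\ge0$, of symbols of truncated kernels $\mathbf{1}_{(0,a)}$. For each such indicator the transform is explicitly positive and comparable to $a^{2}\min\{1,(ra)^{-(n+1)/2}\}$-type quantities. Restricting the superposition to $a\in(1/r,\varepsilon)$ and using the $t^\nu$-monotonicity (a doubling-type lower bound $f_\rho(1/(2r))\le 2^{\nu}f_\rho(1/r)$) then yields the matching lower bound $\widehat Q_\rho(\xi)\ge C^{-1}\overline{\rho}(1/r)/r^n$. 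Alternatively, one could simply cite \cite[Lemma~2.6]{CuetoHidde2025}, as the paper indicates.
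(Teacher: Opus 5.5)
The paper does not prove this lemma; it only cites \cite[Lemma~2.6]{CuetoHidde2025}, which is what your closing remark suggests. Your direct argument, however, has a genuine gap in the upper bound.

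\textbf{The upper bound.} The reduction to $\frac{1}{2\pi r}\int_0^\infty f_\rho(t)\Phi_n(2\pi rt)\,dt$, the split at scale $1/r$, and the second mean value theorem on the middle range are all fine. The problem is on $(0,1/r)$, where you use $f_\rho(t)\le (rt)^{-\nu}f_\rho(1/r)$.

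The monotonicity of $t^\nu f_\rho$ in \eqref{eqn:H1} gives the opposite inequality, $f_\rho(t)\ge (rt)^{-\nu}f_\rho(1/r)$. This is the one you correctly use for the lower bound, and passing to a smaller $\nu$ does not change its direction. Hypothesis \eqref{eqn:H1} only says that $f_\rho$ blows up \emph{at least} like $t^{-\nu}$. Your estimate needs an upper growth bound instead, namely that $t^{\mu}f_\rho(t)$ be almost increasing for some $\mu<2$.

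This cannot be repaired within \eqref{eqn:H0}--\eqref{eqn:H1}. Your argument never uses the sign of $K_\rho$, so it would prove the upper bound for every $\rho$ satisfying \eqref{eqn:H0} and \eqref{eqn:H1}, and that is false. Take $\overline{\rho}(t)=t^{-n}\log(1/t)^{-2}$ near $0$, continued so that $f_\rho(t)=t^{-2}\log(1/t)^{-2}$ stays decreasing with compact support. Then \eqref{eqn:H0} holds, and \eqref{eqn:H1} holds with $\nu=1$. Set $\Psi_n(\tau)=\int_{\mathbb{S}^{n-1}}\cos(\tau\omega_1)\,d\sigma(\omega)$ and integrate by parts. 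The resulting integrand is nonnegative, and $\Psi_n(0)-\Psi_n(\tau)\ge c\tau^2$ for $0\le\tau\le 1$. Hence, for large $r$,
\[
\widehat{Q}_\rho(re_1)=\frac{1}{4\pi^2r^2}\int_0^\infty\bigl(-f_\rho'(a)\bigr)\bigl(\Psi_n(0)-\Psi_n(2\pi ra)\bigr)\,da\ \ge\ c\int_0^{1/(2\pi r)}\frac{da}{a\log^2(1/a)}=\frac{c}{\log(2\pi r)},
\]
whereas $\overline{\rho}(1/r)\,r^{-n}=(\log r)^{-2}$.

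What closes your argument is an extra hypothesis such as \eqref{eqn:H4}, which the paragraph before the lemma alludes to. It gives $f_\rho(\tau)\le C(r\tau)^{-(1+t)}f_\rho(1/r)$ for $\tau<1/r$. Since $1+t<2$, the small-$t$ piece is then $\lesssim f_\rho(1/r)/r$, as you wanted.

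\textbf{The lower bound.} Your sketch has the same sign slip. Hypothesis \eqref{eqn:H1} yields $f_\rho(1/(2r))\ge 2^{\nu}f_\rho(1/r)$, not $\le$, so the doubling bound you invoke is not available; it is again an \eqref{eqn:H4}-type property. The asymptotics for the truncated kernels are also off. For $f=\mathbf{1}_{(0,a)}$ the symbol is $\bigl(\Psi_n(0)-\Psi_n(2\pi ra)\bigr)/(4\pi^2r^2)$:
\begin{itemize}
\item for $n\ge 2$ it is comparable to $\min\{a^2,r^{-2}\}$, with no decay in $ra$;
\item for $n=1$ it equals $(1-\cos(2\pi ra))/(2\pi^2r^2)$, which vanishes whenever $ra\in\mathbb{N}$.
\end{itemize}

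A working version goes as follows. For $n\ge2$, the part of the superposition over $a\in[1/r,\varepsilon)$ is $\gtrsim r^{-2}\bigl(f_\rho(1/r)-f_\rho(\varepsilon^-)\bigr)$. Then \eqref{eqn:H1}, used in the correct direction, gives $f_\rho(\varepsilon^-)\le(r\varepsilon)^{-\nu}f_\rho(1/r)$. For $n=1$ you need instead the density bound $-df_\rho\ge\nu f_\rho\,dt/t$ on $(0,\varepsilon)$, which also follows from \eqref{eqn:H1}, to get past the zeros of $1-\cos$.

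In both cases you must still show that the contribution of $t\ge\varepsilon$, where $f_\rho$ is not assumed monotone, is nonnegative or of lower order. Positivity of the total $\widehat{Q}_\rho$ does not let you discard it. Its crude size is $r^{-(n+1)/2}$, and this lies below $f_\rho(1/r)r^{-2}\gtrsim r^{\nu-2}$ automatically only when $\nu>(3-n)/2$.

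Finally, $\Phi_n(2\pi rt)\ge 0$ on all of $(0,1/r)$ fails in low dimensions; for example $\Phi_1(\tau)=2\sin\tau$. The split should therefore be at $\delta_0/r$ with $\delta_0$ small.
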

	If we further assume \eqref{eqn:H3} and \eqref{eqn:H4}, we get as a direct consequence the following result:
	\begin{Lema}[Fractional estimates on $\widehat{Q}_\rho$]
		Let $\rho$ satisfying \eqref{eqn:H0},\eqref{eqn:H1},\eqref{eqn:H3} and \eqref{eqn:H4}and let $K_\rho$ satisfying \eqref{eq:K positive}. Then, there exists a positive constant $C=C(n,\rho)$ such that $$\frac{1}{C}|\xi|^{s-1}\leq \widehat{Q}_\rho(\xi)\leq C|\xi|^{t-1},\,|\xi|\geq 1/\varepsilon,$$ where $s\leq t$ are the fractional parameters on the hypothesis \eqref{eqn:H3} and \eqref{eqn:H4}. 
	\end{Lema}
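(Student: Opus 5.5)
The plan is to deduce both bounds directly from the preceding lemma (Estimates on $\widehat{Q}_\rho$). By that lemma there is $C>0$ with
\[
\frac{1}{C}\,\frac{\overline{\rho}(1/|\xi|)}{|\xi|^n}\leq \widehat{Q}_\rho(\xi)\leq C\,\frac{\overline{\rho}(1/|\xi|)}{|\xi|^n},\qquad |\xi|\geq 1/\varepsilon .
\]
So it suffices to show that the quantity $\overline{\rho}(r)\,r^{n}$, evaluated at $r=1/|\xi|\in(0,\varepsilon]$, is bounded below by a multiple of $r^{1-s}=|\xi|^{s-1}$ and above by a multiple of $r^{1-t}=|\xi|^{t-1}$. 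Indeed, $\overline{\rho}(1/|\xi|)|\xi|^{-n}=r^n\overline{\rho}(r)=r^{1-s}\cdot r^{n+s-1}\overline{\rho}(r)$, and similarly with $t$.

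For the lower bound I will use \eqref{eqn:H3}. Since $r\mapsto r^{n+s-1}\overline{\rho}(r)$ is almost decreasing on $(0,\varepsilon)$, for $r<r_0$ with $r_0\in(0,\varepsilon)$ fixed we get $r^{n+s-1}\overline{\rho}(r)\geq c\, r_0^{n+s-1}\overline{\rho}(r_0)$. The right-hand side is a positive constant, because by \eqref{eqn:H0} we have $\inf_{\overline{B(0,\varepsilon)}}\rho>0$. This gives $\widehat{Q}_\rho(\xi)\geq C^{-1}|\xi|^{s-1}$ for $|\xi|>1/r_0$.

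For the upper bound I will use \eqref{eqn:H4} in the same way. Almost increasing means $r^{n+t-1}\overline{\rho}(r)\leq c^{-1} r_0^{n+t-1}\overline{\rho}(r_0)$ for $r\leq r_0$, which gives $\widehat{Q}_\rho(\xi)\leq C|\xi|^{t-1}$.

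The one delicate point is the endpoint range $r$ near $\varepsilon$, i.e.\ $|\xi|$ close to $1/\varepsilon$, where the almost-monotonicity is stated only on the open interval $(0,\varepsilon)$. I would handle this either by choosing $r_0$ slightly smaller and treating the compact annulus $1/\varepsilon\le|\xi|\le 1/r_0$ separately, using continuity and positivity of $\widehat Q_\rho$ there together with the two-sided bound from the previous lemma, or simply by shrinking $\varepsilon$, which the statement permits since the constant may depend on $\rho$. Finally, $s\le t$ is consistent: comparing the two bounds as $|\xi|\to\infty$ forces $|\xi|^{s-1}\lesssim|\xi|^{t-1}$. The constant depends only on $n$, $\rho$, and the almost-monotonicity constants.
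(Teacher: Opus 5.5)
Your proposal is correct and takes the same route as the paper. The paper gives no separate proof; it presents the lemma as a direct consequence of the preceding estimate $\widehat{Q}_\rho(\xi)\approx \overline{\rho}(1/|\xi|)\,|\xi|^{-n}$ combined with the almost-monotonicity in (H3)--(H4), which is exactly your factorization $r^n\overline{\rho}(r)=r^{1-s}\cdot r^{n+s-1}\overline{\rho}(r)$ with $r=1/|\xi|$.

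For the endpoint range, keep the compact-annulus argument and drop the ``shrink $\varepsilon$'' option, since shrinking would change the range $|\xi|\geq 1/\varepsilon$ in the statement. On the annulus you can also avoid appealing to continuity of $\widehat{Q}_\rho$: $\overline{\rho}$ is bounded below on $[r_0,\varepsilon]$ by (H0) and bounded above there by (H3).
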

	From here, the following estimates for the multiplier $m_{K_\rho}$ are straightforward.
	\begin{Coro}\label{Multiplier}
		Let $\rho$ satisfying \eqref{eqn:H0},\eqref{eqn:H1},\eqref{eqn:H3} and \eqref{eqn:H4} with $s=t$, and let $K_\rho$ satisfying \eqref{eq:K positive}. Then, there exists a positive constant $C=C(n,\rho)$ such that $$\frac{4\pi^2}{C^2}|\xi|^{2s}\leq m_{K_\rho}(\xi)\leq 4\pi^2C^2|\xi|^{2t},\,\forall|\xi|\geq 1/\varepsilon.$$
	\end{Coro}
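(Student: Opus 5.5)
The plan is to read $m_{K_\rho}$ in the statement as the Fourier symbol of the integro-differential operator $\mathcal{L}_\rho$ in the sense of \eqref{eq:fourierK}, that is, $m_{K_\rho}(\xi)=\int_{\R^n}(1-\cos(2\pi\xi\cdot z))K_\rho(z)\,dz$. This is the quantity that must be compared with $|\xi|^{2s}$ for membership in $\mathcal{K}_s(\lambda,\Lambda)$. It is also the only reading under which the displayed two-sided bound can hold, since $-4\pi^2|\xi|^2\widehat{Q}_\rho(\xi)^2\le 0$. So the first step is to identify this symbol explicitly in terms of $\widehat{Q}_\rho$.

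For that identification I would reuse the computation in the proofs of Theorem \ref{Igualdad} and Proposition \ref{prop-rho-laplaciano}. By the radial symmetry of $K_\rho$ (Lemma \ref{Radial}), the sine part of $\int(1-e^{-2\pi i\xi\cdot h})K_\rho(h)\,dh$ vanishes. The remaining integral equals $\widehat{K_\rho}(0)-\widehat{K_\rho}(\xi)=4\pi^2|\xi|^2\widehat{Q}_\rho(\xi)^2=|\lambda_\rho(\xi)|^2$, using $\widehat{K_\rho}(0)=0$, which follows from the factor $|\xi|^2$ in the earlier Proposition. The integral converges absolutely by Lemma \ref{lemma: int condition} together with $1-\cos t\le \min\{2,t^2/2\}$. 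Hence
\[
m_{K_\rho}(\xi)=4\pi^2|\xi|^2\widehat{Q}_\rho(\xi)^2,
\]
which is exactly the symbol of $\mathcal{L}_\rho=-\Delta_\rho$ obtained in Proposition \ref{prop-rho-laplaciano}.

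The final step applies the fractional estimates on $\widehat{Q}_\rho$ for $|\xi|\ge 1/\varepsilon$. These estimates give $\widehat{Q}_\rho(\xi)\ge C^{-1}|\xi|^{s-1}>0$, and the upper bound $\widehat{Q}_\rho(\xi)\le C|\xi|^{t-1}$ is then also an upper bound on a positive quantity. Since both sides are nonnegative, the inequalities can be squared: $C^{-2}|\xi|^{2s-2}\le\widehat{Q}_\rho(\xi)^2\le C^2|\xi|^{2t-2}$. Multiplying by $4\pi^2|\xi|^2$ gives $\frac{4\pi^2}{C^2}|\xi|^{2s}\le m_{K_\rho}(\xi)\le 4\pi^2C^2|\xi|^{2t}$ for all $|\xi|\ge 1/\varepsilon$. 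When $s=t$, as assumed, this is the comparability \eqref{eq:fourierK} at high frequencies with $\lambda=4\pi^2C^{-2}$ and $\Lambda=4\pi^2C^2$.

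The algebra is routine. The only delicate point is the first step: making sure the symbol is taken with the sign convention of \eqref{eq:fourierK}, which is nonnegative, rather than as $\widehat{K_\rho}$ from the earlier Proposition. It is also worth checking there that positivity of $\widehat{Q}_\rho$, guaranteed by \eqref{eqn:H1} through the lower estimate, is what legitimizes squaring the bounds.
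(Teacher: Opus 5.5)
Your proposal is correct and is essentially the paper's proof. The paper likewise takes $m_{K_\rho}(\xi)=|\lambda_\rho(\xi)|^2=4\pi^2|\xi|^2\widehat{Q}_\rho(\xi)^2$ and squares the two-sided fractional bounds on $\widehat{Q}_\rho$ for $|\xi|\ge 1/\varepsilon$; your explicit remarks on the sign convention and on the positivity of $\widehat{Q}_\rho$ before squaring clarify two points the paper leaves implicit (the earlier Proposition literally sets $m_{K_\rho}=\widehat{K_\rho}=-|\lambda_\rho|^2$, whereas the corollary needs the nonnegative symbol of \eqref{eq:fourierK}).
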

	\begin{proof}
	Remember that $\lambda_\rho(\xi)=2\pi i\xi\widehat{Q}_\rho(\xi)$ and $m_{K_\rho}(\xi)=|\lambda_\rho(\xi)|^2$. Hence, by the previous Lemma, \begin{align*}
		m_{K_\rho}(\xi)&=|\lambda_\rho(\xi)|^2=4\pi^2|\xi|^2\widehat{Q}_\rho^2(\xi)\geq \frac{4\pi^2}{C^2}|\xi|^2|\xi|^{2s-2}=\frac{4\pi^2} {C^2}|\xi|^{2s},\\
		m_{K_\rho}(\xi)&=|\lambda_\rho(\xi)|^2=4\pi^2|\xi|^2\widehat{Q}_\rho^2(\xi)\leq  4\pi^2C^2|\xi|^2|\xi|^{2t-2}=4\pi^2 C^2|\xi|^{2t},
	\end{align*} as we wanted to prove. 
\end{proof}

Observe that for any \( u, v \in H^{\rho,2}(\mathbb{R}^n) \), Plancherel's
theorem yields
\[\langle u, v \rangle_\rho=\int_{\mathbb{R}^n} |\lambda_\rho(\xi)|^2\widehat{u}(\xi)\,\widehat{v}(\xi)\,d\xi.\]
Moreover, if \( \rho \) satisfies \eqref{eqn:H0}-\eqref{eqn:H4} with
\( s = t \), the preceding lemma gives
\[[u]_{\rho,2} = \langle u, u \rangle_\rho \approx \|D^s u\|_{2}^2,\]
and hence the bilinear form \( \langle \cdot, \cdot \rangle_\rho \) extends
naturally to \( H^{s,2}(\mathbb{R}^n) \times H^{s,2}(\mathbb{R}^n) \).
As a conclusion for this section, we obtain that our family of nonlocal kernels $K_\rho$ defines integro-differential operators included in the class considered \cite{FernandezRos} under suitable hypothesis.
\begin{Teor}
    Let $\rho$ a nonlocal kernel satisfying (\ref{eqn:H0})--(\ref{eqn:H4}) with $s=t$. Then, $K_\rho \in \mathcal{K}_s(\lambda,\Lambda)$.
\end{Teor}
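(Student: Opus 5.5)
The plan is to verify the defining conditions of the class $\mathcal{K}_s(\lambda,\Lambda)$ one at a time for $K=K_\rho$. Three of them come from results already stated in this section.

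\emph{Structural conditions.} Nonnegativity of $K_\rho$ is the standing assumption \eqref{eq:K positive}. Symmetry $K_\rho(y)=K_\rho(-y)$ follows from the radiality established in Lemma \ref{Radial}. Local integrability away from the origin follows from the integrability condition \eqref{eq:integrabilityK}, which is Lemma \ref{lemma: int condition}. Finally, Proposition \ref{prop-rho-laplaciano} identifies $\mathcal{L}_\rho=-\Delta_\rho$ as an operator of exactly the form in \cite{FernandezRos}, with symbol $-m_{K_\rho}(\xi)=|\lambda_\rho(\xi)|^2=4\pi^2|\xi|^2\widehat{Q}_\rho(\xi)^2$. (In Corollary \ref{Multiplier} the paper writes $m_{K_\rho}$ for this nonnegative quantity, and I follow that convention below.)

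\emph{Symbol comparability, high frequencies.} It remains to prove \eqref{eq:fourierK}, namely $\lambda|\xi|^{2s}\le m_{K_\rho}(\xi)\le\Lambda|\xi|^{2s}$. For $|\xi|\ge 1/\varepsilon$ this is exactly Corollary \ref{Multiplier} with $s=t$, which gives the constants $4\pi^2C^{\mp2}$.

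\emph{Symbol comparability, low frequencies.} The remaining region is the ball $|\xi|<1/\varepsilon$. I would use two tools here.

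\begin{itemize}
\item By the reduction in \cite[Proposition 3.10]{BellidoMoraHidde2025}, we may assume $\rho$ has compact support, so that $Q_\rho\in L^1$ and $\widehat{Q}_\rho$ is continuous.
\item Hypothesis \eqref{eqn:H1} gives $\widehat{Q}_\rho>0$, as recalled before the estimates on $\widehat{Q}_\rho$. Moreover $\widehat{Q}_\rho(0)=\|Q_\rho\|_1>0$.
\end{itemize}

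On the compact set $\overline{B(0,1/\varepsilon)}$, continuity and positivity then give $0<a\le\widehat{Q}_\rho\le b$. Hence $m_{K_\rho}(\xi)\approx|\xi|^2$ on this ball.

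The main obstacle is reconciling this $|\xi|^2$ behaviour near $\xi=0$ with the required $|\xi|^{2s}$. Since $2s<2$, the upper bound $|\xi|^2\le\varepsilon^{2s-2}|\xi|^{2s}$ holds on the ball. The lower bound, however, fails as $\xi\to0$ for compactly supported $\rho$.

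I would therefore first try to avoid the compact-support reduction. The idea is to exploit the tail of $\rho$ so that $\widehat{Q}_\rho(\xi)\gtrsim|\xi|^{s-1}$ also for small $\xi$, in the spirit of the Riesz kernel. Concretely, I would read the theorem as allowing \eqref{eqn:H3}--\eqref{eqn:H4} to control $\rho$ at all scales, and then run the estimate of \cite[Lemma 2.6]{CuetoHidde2025} for every $\xi\neq0$. If that reading is not available, I would instead interpret membership through the equivalent characterization of \cite[Proposition 2.2.1]{FernandezRos}, checked at scales $r\in(0,\varepsilon)$. These scales correspond precisely to $|\xi|\gtrsim1/\varepsilon$, where Corollary \ref{Multiplier} applies.

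In that second reading, the two conditions to verify are:
\begin{itemize}
\item the annular bound $r^{2s}\int_{B_{2r}\setminus B_r}K_\rho\le c_2$;
\item the lower bound on the truncated second moment, $c_1\le r^{2s-2}\inf_{e}\int_{B_r}|e\cdot y|^2K_\rho(y)\,dy$.
\end{itemize}
Both follow from the symbol bounds by the standard Fourier argument in that proposition, with the directional infimum trivial by radiality. Combining the three regimes gives $K_\rho\in\mathcal{K}_s(\lambda,\Lambda)$.
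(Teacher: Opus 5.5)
\paragraph{Verdict.} Your structural and high-frequency steps are the paper's route, and you correctly found a gap in the low-frequency regime that the paper also leaves open. Under the stated hypotheses the theorem as written is false for integrable kernels, so neither your proposal nor the paper proves it.

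\paragraph{What coincides with the paper.} The paper gives no separate proof; it presents the theorem as the conclusion of earlier results. Those are nonnegativity from \eqref{eq:K positive}, symmetry from Lemma~\ref{Radial}, \eqref{eq:integrabilityK} from Lemma~\ref{lemma: int condition}, the identification in Proposition~\ref{prop-rho-laplaciano}, and the symbol bounds of Corollary~\ref{Multiplier} for $|\xi|\ge 1/\varepsilon$. Your steps use exactly these.

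\paragraph{Why the low-frequency gap cannot be closed.} The class $\mathcal{K}_s(\lambda,\Lambda)$ requires \eqref{eq:fourierK} for \emph{every} $\xi$, but Corollary~\ref{Multiplier} only covers $|\xi|\ge 1/\varepsilon$. Suppose $\rho\in L^1(\R^n)$, for instance any compactly supported kernel such as $\rho^s_\delta$.
\begin{itemize}
\item Fubini gives $\norm{Q_\rho}_1=\norm{\rho}_1/n<\infty$, so $\widehat{Q}_\rho$ is continuous with $\widehat{Q}_\rho(0)=\norm{\rho}_1/n>0$.
\item Hence $|\lambda_\rho(\xi)|^2=4\pi^2|\xi|^2\widehat{Q}_\rho(\xi)^2=o(|\xi|^{2s})$ as $\xi\to0$, and no $\lambda>0$ works.
\item In kernel terms: if $\operatorname{supp}\rho\subset\overline{B(0,\delta)}$, then $K_\rho=-DQ_\rho*DQ_\rho$ vanishes outside $\overline{B(0,2\delta)}$. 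So $r^{2s-2}\int_{B_r}|e\cdot y|^2K_\rho\,dy\to0$ as $r\to\infty$.
\end{itemize}
Your closing sentence, ``combining the three regimes gives $K_\rho\in\mathcal{K}_s(\lambda,\Lambda)$'', therefore does not follow, and no argument could give it. For the same reason, the compact-support reduction via \cite[Proposition 3.10]{BellidoMoraHidde2025} in your first bullet is not admissible here. It preserves $H^{\rho,p}$ up to equivalent norms, but it changes $K_\rho$, and precisely its low-frequency symbol.

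\paragraph{What you can prove.} You should commit to one corrected statement rather than offer two ``readings''.

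(a) Assume (H1), (H3), (H4) on all of $(0,\infty)$. You need (H1) globally too, because the estimate of \cite[Lemma 2.6]{CuetoHidde2025} is driven by the monotonicity of $f_\rho$ and only reaches $|\xi|\ge1/\varepsilon$. Then $\overline{\rho}(r)\approx r^{-(n+s-1)}$ for all $r$, the argument runs at every frequency, and $\widehat{Q}_\rho(\xi)\approx|\xi|^{s-1}$ for all $\xi\neq0$. This gives genuine membership, but it excludes every finite-horizon kernel.

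(b) Under the stated hypotheses you get only a localized membership:
\begin{itemize}
\item $\lambda|\xi|^{2s}\le|\lambda_\rho(\xi)|^2\le\Lambda|\xi|^{2s}$ for $|\xi|\ge1/\varepsilon$;
\item for compactly supported $\rho$, the upper bound for all $\xi$, as you observed;
\item equivalently, the two kernel conditions of \cite[Proposition 2.2.1]{FernandezRos}, but only for $r\in(0,r_0)$.
\end{itemize}
The lower moment bound here is not a direct transcription. Testing the symbol at $|\xi|=M/r$, you must absorb $2\int_{B_r^c}K_\rho$. That requires $\int_{B_r^c}K_\rho\lesssim r^{-2s}$, obtained by summing the small-scale annular bounds up to scale $\varepsilon$ and using \eqref{eq:integrabilityK} beyond it, and then choosing $M$ large. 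This is where the restriction to small $r$ enters.

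Either way, say explicitly that you are proving a modified theorem. The localized class is strictly larger than $\mathcal{K}_s(\lambda,\Lambda)$.
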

	
\section{Maximum principle}
The functional framework developed so far provides the necessary tools to
study the Dirichlet problem associated with the \( \rho \)-Laplacian, namely
\begin{equation}\label{eqn:Dirichlet}
	\begin{cases}
		(-\Delta)_\rho u = f, & x \in \Omega, \\
		u = 0, & x \in \Omega^c.
	\end{cases}
\end{equation}
Thanks to the Poincaré inequality \eqref{Poincare-ineq}, the above problem
admits the equivalent variational formulation
\[
\min_{u \in H_0^{\rho,2}(\Omega)}
\left\{
\frac{1}{2}
\int_{\mathbb{R}^n} |D_\rho u|^2\,dx
-
\int_\Omega f u\,dx
\right\}.
\]
Furthermore, the results of the preceding sections allow us to express the
corresponding weak formulation in two equivalent ways. On the one hand,
using the nonlocal gradient, a function \( u \in H_0^{\rho,2}(\Omega) \)
is a weak solution if
\[
\int_{\mathbb{R}^n}
D_\rho u \cdot D_\rho \phi\,dx
=
\int_\Omega f\phi\,dx
\qquad
\text{for every } \phi \in C_c^\infty(\mathbb{R}^n).
\]
On the other hand, by Lemma~\ref{lemma-inner-produc-formu}, this is
equivalently expressed in terms of the bilinear form \eqref{eq-inner-product}
as
\[
\langle u, \phi \rangle_\rho = \int_\Omega f\phi\,dx
\qquad
\text{for every } \phi \in C_c^\infty(\mathbb{R}^n).
\]
This dual weak formulation provides a natural starting point for the study
of more general nonlocal elliptic equations, encompassing lower-order terms
and anisotropic effects, and thereby extending the framework associated with
the Riesz fractional gradient studied in \cite{ShiehSpector2015,ShiehSpector2018}.
\[
\begin{cases}
	-\operatorname{div}_{\rho}\bigl(A(x)D_\rho u\bigr) + b(x)u = f,
	& x \in \Omega, \\
	u = 0,
	& x \in \Omega^c,
\end{cases}
\]
for which existence and uniqueness of weak solutions follow, under suitable
assumptions on the coefficients \( A \) and \( b \), as a direct application
of the Lax-Milgram theorem.

	Due to the variational nature of nonlocal gradients, we can further define a nonlocal $p$-Laplacian for a general kernel $\rho$ as the first variation of the $p$-norm of $D_\rho u$ for $u\in H^{\rho,p}(\R^n)$. We introduce the operator $$\Delta_{\rho,p}:H^{\rho,p}_0(\Omega)\to \left(H^{\rho,p}_0(\Omega)\right)^*,\,u\mapsto \operatorname{div}_\rho\left(|D_\rho u|^{p-2}D_\rho u|\right),$$ as the $(\rho,p)$-\textit{Laplacian}, or simply \textit{nonlocal $p$-Laplacian}. For every $u\in H_0^{\rho,p}(\Omega)$, we define the action of $\Delta_{\rho,p}u$ over $v\in H_0^{\rho,p}(\Omega)$ as $$\langle-\Delta_{\rho,p}u,v\rangle:=\int_{\R^n}|D_\rho u|^{p-2}D_\rho u\cdot D_\rho v\,dx,$$  which is well defined by the nonlocal integration by parts formula. For $p=2$, the $(\rho,p)$-Laplacian is just the $\rho$-Laplacian, and for the choice $\rho=\rho^s$, we get the fractional $p$-Laplacian introduced in \cite{Schikorra2018} as $H^{s,p}$-Laplacian, which coincides for $p=2$ with the usual fractional $p$-Laplacian arising from the Gagliardo seminorm.  Clearly, the formula of $\Delta_{\rho,p}$ completely resembles the classical $p$-Laplacian $-\operatorname{div}(|D u|^{p-2}D u)$.
	Let us briefly look at the following Dirichlet problem \begin{equation}\label{eqn:DirichletP}
		\begin{cases}
			-\Delta_{\rho,p}u&=f,\,x\in \Omega,\\
			\hfill u&=0,\,x\in \Omega^c
		\end{cases}    
		.\end{equation} This problem is a paradigmatic example of a singular quasilinear elliptic problem on the nonlocal framework. Existence of weak solutions for \eqref{eqn:DirichletP} is a consequence of the abstract existence result \cite[Theorem 2.11]{CuetoHidde2025} applied to the energy density $W(D_\rho u)=|D_\rho u|^p$. Indeed, completely analogous as the case of the classical $p$-Laplacian, we can derive that $\Delta_{\rho,p}$ is an strongly monotone operator, yielding the uniqueness of weak solutions for the problem \eqref{eqn:DirichletP}.
	\begin{Teor}
		Let $\Omega\subset \R^n$ a bounded domain, $1<p,q<\infty$  such that $1/p+1/q=1$, $f\in L^q(\Omega)$ and $\rho$ a compactly supported kernel satisfying the hypothesis on Theorem \ref{Poincare-ineq}. Then, there exists an unique $u\in H_0^{\rho,p}$ such that $$\int_{\R^n}|D_\rho u|^{p-2}D_\rho u\cdot D_\rho v\,dx=\int_\Omega f v\,dx,\,\forall v\in C_c^\infty(\Omega).$$
	\end{Teor}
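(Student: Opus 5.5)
We follow the classical direct-method and monotone-operator argument for the $p$-Laplacian, carried out in the reflexive space $X:=H^{\rho,p}_0(\Omega)$ with the norm $\|D_\rho u\|_{L^p(\R^n;\R^n)}$. By Theorem \ref{Poincare-ineq}(1) this norm is equivalent to $\|\cdot\|_{\rho,p}$ on $X$. We use the Poincaré constant of that theorem, so we work under its hypotheses, including $\lim_{t\to0^+}t^{n-1}\overline\rho(t)>0$. For a Lipschitz domain, $C_c^\infty(\Omega)$ is dense in $X$, so testing against $v\in C_c^\infty(\Omega)$ is equivalent to testing against all $v\in X$.

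\textbf{Existence.} We minimize
$$E(u)=\frac1p\int_{\R^n}|D_\rho u|^p\,dx-\int_\Omega fu\,dx$$
over $X$. The linear term is continuous on $X$, since by Hölder and Poincaré
$$\left|\int_\Omega fu\right|\le \|f\|_q\|u\|_{L^p(\Omega)}\le C\|f\|_q\|D_\rho u\|_p.$$
This bound also gives coercivity, because $E(u)\ge \frac1p\|D_\rho u\|_p^p-C\|f\|_q\|D_\rho u\|_p$ with $p>1$.

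For weak lower semicontinuity: $u\mapsto D_\rho u$ is linear and bounded from $X$ into $L^p(\R^n;\R^n)$, so weak convergence $u_k\rightharpoonup u$ in $X$ gives $D_\rho u_k\rightharpoonup D_\rho u$ in $L^p$. The convex continuous functional $\frac1p\|\cdot\|_p^p$ is then weakly lower semicontinuous. By reflexivity of $X$, a bounded minimizing sequence has a weakly convergent subsequence, and its limit is a minimizer. Equivalently, one can invoke the abstract result \cite[Theorem 2.11]{CuetoHidde2025} with $W(F)=|F|^p$, which is convex with $p$-growth.

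\textbf{Euler--Lagrange equation.} We differentiate $t\mapsto E(u+tv)$ at $t=0$. The map $F\mapsto\frac1p|F|^p$ is $C^1$ with $|\partial_F|^p|\le C(|F|^{p-1}+|G|^{p-1})$ along $F=D_\rho u+tD_\rho v$, $|t|\le1$ (with $G=D_\rho v$). Dominated convergence then justifies passing the derivative under the integral, which gives the weak formulation for every $v\in X$, and in particular for every $v\in C_c^\infty(\Omega)$.

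\textbf{Uniqueness.} Suppose $u_1,u_2\in X$ are two solutions. We subtract the two equations and test with $v=u_1-u_2\in X$, which is allowed by density. This gives
$$\int_{\R^n}\bigl(|D_\rho u_1|^{p-2}D_\rho u_1-|D_\rho u_2|^{p-2}D_\rho u_2\bigr)\cdot(D_\rho u_1-D_\rho u_2)\,dx=0.$$
The integrand is pointwise nonnegative, by strict monotonicity of $F\mapsto|F|^{p-2}F$ (Cauchy–Schwarz), and it vanishes only where $D_\rho u_1=D_\rho u_2$. Hence $D_\rho(u_1-u_2)=0$ a.e., and the Poincaré inequality in Theorem \ref{Poincare-ineq} forces $u_1=u_2$. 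Alternatively, uniqueness follows from strict convexity of $E$.

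\textbf{Main obstacle.} The step needing care is the passage from test functions in $C_c^\infty(\Omega)$ to $v=u_1-u_2\in X$. This requires the density $H^{\rho,p}_0(\Omega)=\overline{C_c^\infty(\Omega)}$ quoted for Lipschitz $\Omega$ from \cite[Theorem 3.9 (iii)]{BellidoMoraHidde2025}, together with continuity of the left-hand side in $v$ for the $X$-norm, which holds by Hölder since $|D_\rho u|^{p-1}\in L^q$. If $\Omega$ is not Lipschitz, we instead read $H^{\rho,p}_0$ as the closure of $C_c^\infty(\Omega)$ and run the same argument there.
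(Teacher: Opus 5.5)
Your proposal is correct and follows essentially the paper's route. The paper gets existence by applying the abstract result \cite[Theorem 2.11]{CuetoHidde2025} to $W(F)=|F|^p$, which is the direct method you carry out by hand (coercivity from the Poincar\'e inequality, reflexivity, weak lower semicontinuity), and it gets uniqueness from the monotonicity of $\Delta_{\rho,p}$, exactly as in your test with $u_1-u_2$. Your version makes explicit what the paper's two-line sketch leaves implicit: the Poincar\'e condition $\lim_{t\to0^+}t^{n-1}\overline{\rho}(t)>0$, and the density of $C_c^\infty(\Omega)$ in $H^{\rho,p}_0(\Omega)$ (e.g.\ for Lipschitz $\Omega$) needed to test with $u_1-u_2$; the only slip is that the domination bound in your Euler--Lagrange step should read $|F|^{p-1}|G|\le(|D_\rho u|+|D_\rho v|)^{p-1}|D_\rho v|$.
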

	For the choice $\rho=\rho^s$, in \cite{Schikorra2018}, the authors established interior H\"older regularity for solutions of \eqref{eqn:DirichletP} in the homogeneus case. More recently, in \cite{Camilo2026}, on of the the authors proved global regularity for the solutions assuming that $f$ is in a suitable Besov space. For the homogeneus case,  the arguments in \cite{Schikorra2018} could be easily adapted. We leave for a subsequent work the study of the regularity in the spirit of  \cite{FernandezRos} for our family of nonlocal operators.  
	\subsection{Maximum principle}
	
	In the classical theory of elliptic equations, maximum principles are a direct consequence of the sign structure of the operator at extremal points. For nonlocal operators, the mechanism is similar in spirit, although it reflects the genuinely nonlocal nature of the equation. Indeed, if a function \(u\) attains a global maximum at some point \(x_{0}\), then
	\[
	u(x_{0})-u(x_{0}+y)\geq 0
	\qquad \text{for every } y\in\mathbb{R}^{n}.
	\]
	Hence, whenever the kernel \(K_{\rho}\) is nonnegative, the integral representation of the operator immediately provides the appropriate sign of \(\mathcal{L}u(x_{0})\).
	
	A fundamental difference with respect to local operators is that the value of \(\mathcal{L}u(x_{0})\) depends on the behavior of \(u\) in the whole space. Consequently, information at a merely local extremum is not sufficient, and one must work with global maxima or minima. This nonlocal interaction is precisely the feature that allows extremal values to propagate through the support of the kernel.
	
	In order to make the previous argument rigorous, one needs enough regularity to evaluate \(\mathcal{L}_\rho u(x)\) pointwise and to ensure that the defining integral is well posed. By Lemma \ref{lemma: int condition}, the kernel \(K_\rho\) satisfies \eqref{eq:integrabilityK}, and therefore \(K_\rho \in L^1_{\mathrm{loc}}(\mathbb{R}^n\setminus\{0\})\). Under these assumptions, the proof follows along the lines of the classical argument for nonlocal integro-differential operators (see, e.g., \cite{RoSe14}). We thus obtain the following strong maximum principle.
	\begin{Prop}[Maximum principle]
		 Let $\rho$ satisfy \eqref{eqn:H0} and let $K_\rho$ satisfy \eqref{eq:K positive},
		and let \( \mathcal{L} \) be the operator introduced in
		\eqref{eq-non-local-laplacian}. Let \( \Omega \subset \mathbb{R}^n \) be a
		bounded open set, and let \( u \in C_{\mathrm{loc}}^{1,1}(\mathbb{R}^n)
		\cap L^\infty(\mathbb{R}^n) \). If
		\[
		\begin{cases}
			\mathcal{L}u \geq 0 & \text{in } \Omega, \\
			u \geq 0 & \text{in } \Omega^c,
		\end{cases}
		\]
		then \( u \geq 0 \) in \( \mathbb{R}^n \).
	\end{Prop}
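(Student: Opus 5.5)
The approach is by contradiction, following the classical argument for integro-differential operators (as in \cite{RoSe14}). Assume $u$ takes a negative value somewhere. Since $u\ge 0$ in $\Omega^c$, we have
\[
\inf_{\mathbb{R}^n} u=\inf_{\overline{\Omega}} u<0 .
\]
Because $\overline{\Omega}$ is compact and $u\in C^{1,1}_{\mathrm{loc}}(\mathbb{R}^n)$ is continuous, this infimum is attained at some $x_0\in\overline{\Omega}$. Moreover, $x_0\notin\partial\Omega$: on $\partial\Omega\subset\Omega^c$ we have $u\ge 0$, since $\Omega$ is open. Hence $x_0\in\Omega$ is a global minimum point of $u$ on $\mathbb{R}^n$, and $u(x_0)<0$.

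The next step is to make sense of $\mathcal{L}u(x_0)$ pointwise. By Lemma \ref{lemma: int condition}, $K_\rho$ satisfies \eqref{eq:integrabilityK}, and by Lemma \ref{Radial} it is radial, hence symmetric. We symmetrize the integrand:
\[
\mathcal{L}u(x_0)=\frac12\int_{\mathbb{R}^n}\bigl(2u(x_0)-u(x_0+h)-u(x_0-h)\bigr)K_\rho(h)\,dh .
\]
On $B_1$ the $C^{1,1}$ bound near $x_0$ gives $|2u(x_0)-u(x_0+h)-u(x_0-h)|\le C|h|^2$. On $B_1^c$ the numerator is bounded by $4\|u\|_\infty$. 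Therefore the integral converges absolutely, and it agrees with the principal-value definition in \eqref{eq-non-local-laplacian}; the extension of Proposition \ref{prop-rho-laplaciano} from $\mathcal{S}$ to this class is exactly what requires this absolute convergence. At a global minimum every numerator $u(x_0)-u(x_0+h)$ is $\le 0$, and $K_\rho\ge0$ by \eqref{eq:K positive}, so $\mathcal{L}u(x_0)\le 0$. Combined with $\mathcal{L}u(x_0)\ge 0$, this forces
\[
\int_{\mathbb{R}^n}\bigl(u(x_0+h)-u(x_0)\bigr)K_\rho(h)\,dh=0
\]
with a nonnegative integrand. Hence $u(x_0+h)=u(x_0)$ for a.e.\ $h$ in the set $\{K_\rho>0\}$.

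The main obstacle is turning this into a contradiction. We need that $u\equiv u(x_0)<0$ propagates until it reaches $\Omega^c$, where $u\ge0$. This requires positivity of $K_\rho$ on a sufficiently rich set: the paper only assumes $K_\rho\ge 0$, not strict positivity. I would first try to show that $K_\rho>0$ on a punctured ball $B_r\setminus\{0\}$. The natural route is through the formula $K_\rho=-DQ_\rho*DQ_\rho$: since $\inf_{\overline{B_\varepsilon}}\rho>0$ by \eqref{eqn:H0}, the two factors $\bar\rho(|z|)\bar\rho(|x-z|)$ are bounded below for $|z|,|x-z|<\varepsilon$. When $|x|$ is small, the region where $z/|z|\cdot (x-z)/|x-z|<0$ (that is, $z$ outside the ball with diameter $[0,x]$, within distance $\varepsilon$) dominates. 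Alternatively, positivity can be extracted from the non-vanishing of the symbol $m_{K_\rho}$, since $K_\rho\equiv0$ on an open set near the origin would contradict \eqref{eqn:H0}'s lower bound.

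Granting $K_\rho>0$ on $B_r\setminus\{0\}$, the argument concludes as follows. By continuity of $u$, $u\equiv u(x_0)$ on $B(x_0,r)$. Each point of that ball is again a global minimum in $\Omega$ or in $\Omega^c$. If such a point lies in $\Omega^c$, we get $u(x_0)\ge0$ immediately, a contradiction. If it lies in $\Omega$, we repeat the argument from it. A chaining argument along a segment from $x_0$ to a point of $\Omega^c$, which exists because $\Omega$ is bounded, reaches $\Omega^c$ in finitely many steps of length $r/2$. There $u=u(x_0)<0$, contradicting $u\ge 0$ in $\Omega^c$. Hence $u\ge0$ in $\mathbb{R}^n$.
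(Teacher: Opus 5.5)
Your reduction matches the paper and is sound. The minimum is attained at some $x_0\in\Omega$, and your symmetrization gives $\mathcal{L}u(x_0)$ a pointwise meaning (more carefully than the paper does). You also correctly get $u(x_0+h)=u(x_0)$ for a.e.\ $h\in\{K_\rho>0\}$.

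The gap is the next step. Your propagation argument rests entirely on $K_\rho>0$ on a punctured ball $B_r\setminus\{0\}$, which you only grant. The hypotheses do not provide this (\eqref{eq:K positive} is only $K_\rho\ge 0$), and neither of your sketches proves it.
\begin{itemize}
\item In the first sketch, $\inf_{\overline{B(0,\varepsilon)}}\rho>0$ bounds from below the favourable contribution from $z$ outside the ball with diameter $[0,x]$. It gives no control of the unfavourable contribution from inside that ball, where the weight is large when $\rho$ is singular at the origin. Worse, the printed formula drops the factors $|z|^{-1}|x-z|^{-1}$ coming from $DQ_\rho$. Once they are restored, the integral is not even absolutely convergent near $z=0$ and $z=x$ for kernels like $\rho^s$, so its sign comes from cancellation, not from one region dominating.
\item In the second sketch, the symbol is global information. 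From $|\lambda_\rho|^2\not\equiv 0$ you get only $K_\rho\not\equiv 0$, not strict positivity at every point of a punctured ball. It does not even exclude $K_\rho\equiv 0$ near the origin. Under \eqref{eqn:H0} alone $|\lambda_\rho|^2$ can be bounded: for $n=1$, $\rho=\chi_{(-1,1)}$ one has $Q_\rho=\log(1/|x|)$ on $(-1,1)$ and $|\lambda_\rho(\xi)|^2=4\bigl(\int_0^{2\pi\xi}\frac{\sin t}{t}\,dt\bigr)^2$. So the bounded symbol that an integrable $K_\rho$ would produce contradicts nothing.
\end{itemize}

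The missing idea is that you need no positivity near the origin at all. Nontriviality of $K_\rho$ plus a chain in a fixed direction suffices, and this is exactly the paper's argument.
\begin{itemize}
\item $K_\rho\not\equiv 0$: otherwise $\mathcal{L}_\rho\equiv 0$, forcing $\widehat{Q_\rho}\equiv 0$, which is impossible because $Q_\rho>0$ near the origin by \eqref{eqn:H0}.
\item Since $\mathbb{R}^n\setminus\{0\}$ is a countable union of balls $B_r(a)$ with $0<r<|a|$, you can fix one with $|\{K_\rho>0\}\cap B_r(a)|>0$.
\item Your a.e.\ identity at $x_0$ then produces $h_0\in B_r(a)$ with $u(x_0+h_0)=u(x_0)$. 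The point $x_1=x_0+h_0$ is again a global minimum. Either $x_1\in\Omega^c$, contradicting $u\ge 0$ there, or $x_1\in\Omega$ and you repeat with $\mathcal{L}u(x_1)\ge 0$.
\item Because $h\cdot a/|a|>|a|-r>0$ for every $h\in B_r(a)$, the iterates satisfy $x_k\cdot a/|a|\ge x_0\cdot a/|a|+k(|a|-r)$. They therefore leave the bounded set $\Omega$ after finitely many steps. The paper leaves the condition $r<|a|$ implicit, but it is what makes the chain escape.
\end{itemize}
Continuity of $u$ is then needed only to locate $x_0$, and no spreading of the minimum over balls is required.
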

	
\begin{proof}
	Assume by contradiction that \(m:=\inf_{\mathbb{R}^{n}}u<0.\) Since \(u\in C(\overline{\Omega})\) and \(u\geq 0\) in \(\Omega^c\), there exists some \(x_{0}\in\Omega\) such that \(u(x_{0})=m.\) Because \(x_{0}\) is a global minimum point of \(u\), we have
	\(u(x_{0})-u(x_{0}+h)\leq 0\)
	for every \(h\in\mathbb{R}^{n}.\)
	
	Since \(K_{\rho}(h)\geq 0\), it follows that
	\[
	\mathcal{L}u(x_{0})
	=
	\int_{\mathbb{R}^{n}}
	\bigl(u(x_{0})-u(x_{0}+h)\bigr)K_{\rho}(h)\,dh
	\leq 0.
	\]
	On the other hand, by hypothesis, \(\mathcal{L}u(x_{0})\geq 0.\) Therefore,
	\(\mathcal{L}u(x_{0})=0.\)
	
	Since \(K_\rho\) is nontrivial and belongs to \(L^1_{\mathrm{loc}}(\mathbb{R}^n\setminus\{0\})\), there exist \(a\neq0\) and \(r>0\) such that
	\[
	\int_{B_r(a)}K_\rho(h)\,dh>0.
	\]
	If \(u(x_0+h)>u(x_0)\) for almost every \(h\in B_r(a)\) such that \(K_\rho(h)>0\), then
	\[
	\int_{B_r(a)}
	\bigl(u(x_0)-u(x_0+h)\bigr)
	K_\rho(h)\,dh<0,
	\]
	contradicting the identity \(\mathcal{L}u(x_0)=0\). Hence, there exists \(h_0\in B_r(a)\) such that \(u(x_0+h_0)=u(x_0)=m.\) Repeating the same argument at the point \(x_1:=x_0+h_0\), we either obtain a contradiction or find \(x_2\in x_1+B_r(a)\) such that \(u(x_2)=m\). Proceeding inductively, we construct a sequence \(\{x_k\}\) satisfying
	\[
	x_{k+1}\in x_k+B_r(a),
	\qquad
	u(x_k)=m.
	\]
	Since \(\Omega\) is bounded, after finitely many steps we either obtain a point $x_k\in \Omega$, such that  $\mathcal{L}u(x_k)<0$ or $x_k\not \in \Omega$, contradicting the assumption \(u\ge0\) in \(\Omega^c\). In both cases we end up in a contradiction. Therefore,
	\(u\ge0\) in \(\mathbb{R}^n\).
\end{proof}
	As a direct consequence of the previous maximum principle, one immediately obtains the following comparison principle.
	
	\begin{Coro}[Comparison principle]
		Let $\rho$ satisfy \eqref{eqn:H0} and let $K_\rho$ satisfy \eqref{eq:K positive},
		and let \( \mathcal{L} \) be the operator introduced in
		\eqref{eq-non-local-laplacian}. Let \( \Omega \subset \mathbb{R}^n \)
		be a bounded open set, and let \( u, v \in C_{\mathrm{loc}}^{1,1}(\mathbb{R}^n)
		\cap L^\infty(\mathbb{R}^n) \). 
		If
		\[
		\begin{cases}
			\mathcal{L}u \geq \mathcal{L}v & \text{in } \Omega, \\
			u \geq v & \text{in } \Omega^c,
		\end{cases}
		\]
		then \( u \geq v \) in \( \mathbb{R}^n \).
	\end{Coro}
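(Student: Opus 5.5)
The plan is to reduce the comparison principle to the maximum principle just proved, using the linearity of $\mathcal{L}=\mathcal{L}_\rho$. I would set $w:=u-v$. Since $C^{1,1}_{\mathrm{loc}}(\mathbb{R}^n)\cap L^\infty(\mathbb{R}^n)$ is a vector space, $w$ again belongs to this class, so $w$ is an admissible function for the maximum principle.

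The first step is to check that $\mathcal{L}w(x)=\mathcal{L}u(x)-\mathcal{L}v(x)$ pointwise for every $x\in\Omega$. For this I need the integrals defining $\mathcal{L}u(x)$ and $\mathcal{L}v(x)$ to be well posed, so that they can be subtracted. On $B_1$ I would use the symmetry of $K_\rho$ (it is radial by Lemma \ref{Radial}) and rewrite the integral in the second-difference form
\[
\tfrac12\int \bigl(2u(x)-u(x+h)-u(x-h)\bigr)K_\rho(h)\,dh .
\]
The $C^{1,1}$ bound gives $|2u(x)-u(x+h)-u(x-h)|\le C|h|^2$ locally, which is integrable against $K_\rho$ near the origin by Lemma \ref{lemma: int condition}. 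On $B_1^c$, the $L^\infty$ bound together with $\int_{B_1^c}K_\rho<\infty$ (again from \eqref{eq:integrabilityK}) gives absolute convergence. Hence $\mathcal{L}$ acts linearly on this class, and $\mathcal{L}w=\mathcal{L}u-\mathcal{L}v\ge 0$ in $\Omega$.

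The second step is the exterior condition: $w=u-v\ge 0$ in $\Omega^c$. Applying the preceding Proposition (maximum principle) to $w$, with the same $\rho$ satisfying \eqref{eqn:H0} and $K_\rho$ satisfying \eqref{eq:K positive}, yields $w\ge 0$ in $\mathbb{R}^n$, that is, $u\ge v$.

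The only point requiring any care is the justification of linearity in the first step, namely that the pointwise integrals converge, possibly only in the principal value / symmetrized sense. I expect this to be routine given \eqref{eq:integrabilityK}. The rest is immediate.
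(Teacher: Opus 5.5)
Your proposal is correct and matches the paper's proof: set $w=u-v$, use linearity of $\mathcal{L}$ to get $\mathcal{L}w\ge 0$ in $\Omega$ and $w\ge 0$ in $\Omega^c$, then apply the maximum principle. The paper takes linearity for granted; your added check that $\mathcal{L}u(x)$ and $\mathcal{L}v(x)$ are well defined is sound and worth keeping, since it relies on the principal-value/second-difference form, the $C^{1,1}$ bound near the origin, and \eqref{eq:integrabilityK}.
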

	\begin{proof}
		Define \(w:=u-v.\) Since \(\mathcal{L}\) is linear, we have \(\mathcal{L}w=\mathcal{L}u-\mathcal{L}v\geq 0\) in \(\Omega.\) Moreover, \(w=u-v\geq 0\)
		in  \(\Omega^c.\) Therefore, the previous maximum principle applied to \(w\) yields \(w\geq 0\) in \(\mathbb{R}^{n},\) that is, \(u\geq v \) in \(\mathbb{R}^{n}.\)
	\end{proof}
	
	\subsection{Weak maximum principle}
	
The positivity of the kernel \( K_\rho \) enables us to establish a weak
maximum principle for solutions of the Dirichlet problem \eqref{eqn:Dirichlet}.
The key observation is that the weak formulation
\[
\langle u, \phi \rangle_\rho = \int_{\mathbb{R}^n} f(x)\phi(x)\,dx
\]
is completely analogous to that of the fractional Laplacian, and the
hypothesis \eqref{eq:K positive} on \( K_\rho \) allows one to use the positive and negative
parts of a solution as test functions. Following the classical argument
of \cite{ServadeiValdinoci2014}, we obtain the
following result.

\begin{Prop}[Weak maximum principle]
	Let $\rho$ satisfy \eqref{eqn:H0} and let $K_\rho$ satisfy \eqref{eq:K positive}. Let \( \Omega \subset \mathbb{R}^n \) be a bounded open set. If
	\( u \in H_0^{\rho,2}(\Omega) \) is a weak solution of \eqref{eqn:DirichletP}
	such that \( \langle u, \phi \rangle_\rho \geq 0 \) for every nonnegative
	\( \phi \in H_0^{\rho,2}(\Omega) \) and \( u \geq 0 \) in \( \Omega^c \),
	then \( u \geq 0 \) in \( \mathbb{R}^n \).
\end{Prop}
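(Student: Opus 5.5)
The plan is to use the negative part $u^-:=\max\{-u,0\}$ as a test function. Since $u\in H_0^{\rho,2}(\Omega)$ and $u\geq 0$ in $\Omega^c$ (indeed $u=0$ a.e. in $\Omega^c$), we have $u^-=0$ a.e. in $\Omega^c$, so $u^-\in H_0^{\rho,2}(\Omega)$ provided $u^-\in H^{\rho,2}(\R^n)$. This membership follows from Theorem \ref{Igualdad}: $W^{\rho,2}(\R^n)=H^{\rho,2}(\R^n)$ with equivalent norms, and the map $t\mapsto t^-$ is $1$-Lipschitz. Hence $|u^-(x)-u^-(y)|\le|u(x)-u(y)|$, so $[u^-]_{\rho,2}\le[u]_{\rho,2}<\infty$ thanks to $K_\rho\ge0$ from \eqref{eq:K positive}, and $\norm{u^-}_2\le\norm{u}_2$. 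Thus $\phi=u^-$ is an admissible nonnegative test function and the hypothesis gives $\langle u,u^-\rangle_\rho\ge 0$.

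Next, decompose $u=u^+-u^-$ and use bilinearity to write $\langle u,u^-\rangle_\rho=\langle u^+,u^-\rangle_\rho-[u^-]_{\rho,2}$. The cross term is
\[
\langle u^+,u^-\rangle_\rho=\int_{\R^n}\int_{\R^n}\bigl(u^+(x)-u^+(y)\bigr)\bigl(u^-(x)-u^-(y)\bigr)K_\rho(x-y)\,dy\,dx .
\]
Since $u^+u^-=0$ pointwise, the integrand equals $-\bigl(u^+(x)u^-(y)+u^+(y)u^-(x)\bigr)K_\rho(x-y)\le 0$, again by \eqref{eq:K positive}. Therefore
\[
0\le\langle u,u^-\rangle_\rho\le -[u^-]_{\rho,2}\le 0,
\]
so $[u^-]_{\rho,2}=0$.

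It remains to conclude $u^-=0$. By the proof of Theorem \ref{Igualdad}, $[u^-]_{\rho,2}=2\norm{D_\rho u^-}_2^2$, so $D_\rho u^-=0$. For $\Omega$ bounded, the Poincaré inequality (Theorem \ref{Poincare-ineq}) would give $u^-=0$ directly, but it requires extra hypotheses. Under only \eqref{eqn:H0}, I would instead argue through Fourier: $|\lambda_\rho(\xi)|^2|\widehat{u^-}(\xi)|^2=0$ a.e. If $\widehat{Q_\rho}$ vanishes only on a null set, then $\widehat{u^-}=0$, hence $u^-=0$ and $u\ge0$. A more elementary route avoids Fourier: $[u^-]_{\rho,2}=0$ means $u^-(x)=u^-(y)$ for a.e. pair with $K_\rho(x-y)>0$. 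Combining this with a chaining argument as in the strong maximum principle (using a ball $B_r(a)$ where $K_\rho>0$ on positive measure) shows $u^-$ is a.e. constant along translations reaching $\Omega^c$, where it vanishes.

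This last step is the main obstacle, and I would handle it by the chaining argument if the Poincaré inequality is unavailable. The measure-theoretic care needed is only routine, since the chaining is done along translations reaching $\Omega^c$ rather than pointwise.
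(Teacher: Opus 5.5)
Your proof is correct and follows the paper's argument: you test with $u^-\in H_0^{\rho,2}(\Omega)$, justified through $W^{\rho,2}(\R^n)=H^{\rho,2}(\R^n)$ and the $1$-Lipschitz map $t\mapsto t^-$; you split $u=u^+-u^-$, observe that the cross term $\langle u^+,u^-\rangle_\rho$ is $\le 0$, and conclude $[u^-]_{\rho,2}=0$. The one difference is that the paper passes from $[u^-]_{\rho,2}=0$ to $u^-\equiv 0$ without comment, whereas your chaining argument supplies this step, and it is indeed routine: a single $h\neq 0$ in $\{K_\rho>0\}$ with $u^-(\cdot+h)=u^-$ a.e., iterated until $x+kh\in\Omega^c$, already forces $u^-=0$ a.e.
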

	
	\begin{proof}
		Let \( u=u^{+}-u^{-}, \) where \( u^{+}:=\max\{u,0\},\) and \( u^{-}:=\max\{-u,0\}.\) 	Since \(u\geq 0\) in \(\Omega^c\), it follows that \( u^{-}=0 \) in \( \Omega^c. \) Moreover, using that \( 	|a^{-}-b^{-}|\leq |a-b|, \) we deduce that  \( u^{-}\in H^{\rho,2}_0(\Omega). \) Therefore, we may take \(\varphi=u^{-}\) as a test function in the weak formulation, obtaining \(0\leq\langle u,u^{-}\rangle_{\rho}.\) Using the decomposition \(u=u^{+}-u^{-}\), we obtain
		
		\[\langle u,u^{-}\rangle_{\rho}=\langle u^{+},u^{-}\rangle_{\rho}-\langle u^{-},u^{-}\rangle_{\rho}.\]
		
		Now, since \( u^{+}(x)u^{-}(x)=0 \) for a.e \( x \in \R^{n}, \) one has\(\left(u^{+}(x)-u^{+}(y)\right)\left(u^{-}(x)-u^{-}(y)\right)\leq 0 \) for a.e. \(x,y\in\mathbb{R}^{n}\). Since \(K_{\rho}\geq 0\), it follows that \(\langle u^{+},u^{-}\rangle_{\rho}\leq0.\) Consequently,
		
		\[0\leq- \langle u^{-},u^{-}\rangle_{\rho}.\]
		
		On the other hand, by positivity of the kernel, \(\langle u^{-},u^{-}\rangle_{\rho}\geq 0.\) Hence, \(\langle u^{-},u^{-}\rangle_{\rho}=0.\)
		Therefore, \( u^{-}\equiv 0 \)  in \( \R^n, \) which implies \( u\geq 0 \) in \( \R^n. \)
	\end{proof}
	
As a direct consequence of the weak maximum principle, we obtain the
following comparison principle.

\begin{Coro}[Comparison principle]
	Let $\rho$ satisfy \eqref{eqn:H0} and let $K_\rho$ satisfy \eqref{eq:K positive}. Let \( \Omega \subset \mathbb{R}^n \) be a bounded open set. If
	\( u, v \in H_0^{\rho,2}(\Omega) \) satisfy
	\( \langle u, \phi \rangle_\rho \geq \langle v, \phi \rangle_\rho \)
	for every nonnegative \( \phi \in H_0^{\rho,2}(\Omega) \) and \( u \geq v \)
	in \( \Omega^c \), then \( u \geq v \) in \( \mathbb{R}^n \).
\end{Coro}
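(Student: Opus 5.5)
The plan is to reduce this comparison principle to the weak maximum principle just proved, using the bilinearity of $\langle\cdot,\cdot\rangle_\rho$, exactly as the pointwise comparison principle was obtained from the strong maximum principle.

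Set $w:=u-v$. Since $H_0^{\rho,2}(\Omega)$ is a linear subspace of $H^{\rho,2}(\R^n)$, we have $w\in H_0^{\rho,2}(\Omega)$. By Theorem \ref{Igualdad} together with the first of the subsequent observations, $\langle\cdot,\cdot\rangle_\rho$ is a well-defined bilinear form on $H^{\rho,2}(\R^n)$; indeed, Lemma \ref{lemma-inner-produc-formu} identifies it with $\langle D_\rho\cdot,D_\rho\cdot\rangle_{L^2}$. Hence, for every nonnegative $\phi\in H_0^{\rho,2}(\Omega)$,
\[
\langle w,\phi\rangle_\rho=\langle u,\phi\rangle_\rho-\langle v,\phi\rangle_\rho\geq 0 .
\]
Moreover, $u\ge v$ in $\Omega^c$ gives $w\ge 0$ in $\Omega^c$.

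These are precisely the hypotheses of the weak maximum principle. The reference in that statement to being a "weak solution of \eqref{eqn:DirichletP}" plays no role in its proof: only the variational inequality and the exterior sign condition are used. Applying that result to $w$ yields $w\ge 0$, i.e.\ $u\ge v$, a.e.\ in $\R^n$.

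The only point requiring care is that the test function $w^-$ used inside the weak maximum principle belongs to $H_0^{\rho,2}(\Omega)$. This was already addressed there through the contraction $|a^--b^-|\le|a-b|$ and the Gagliardo characterization of Theorem \ref{Igualdad}, which relies on \eqref{eq:K positive}. Since $w$ lies in the same space as $u$ and $v$, no new obstacle arises, and the argument is essentially one line.
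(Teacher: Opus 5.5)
Your proposal is correct and is the same argument as the paper's: set $w=u-v$, obtain $\langle w,\phi\rangle_\rho\ge 0$ for nonnegative $\phi\in H_0^{\rho,2}(\Omega)$ by bilinearity, and apply the weak maximum principle to $w$. Your extra points ($w\in H_0^{\rho,2}(\Omega)$, $\langle\cdot,\cdot\rangle_\rho$ being a genuine bilinear form on $H^{\rho,2}(\R^n)$, and the ``weak solution'' clause never being used) just make explicit what the paper's one-line proof leaves implicit.
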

	\begin{proof}
		Define \(w:=u-v.\) Therefore, the previous maximum principle applied to \(w\) yields \(w\geq 0\) in \(\mathbb{R}^{n},\) that is, \(u\geq v \) in \(\mathbb{R}^{n}.\)
	\end{proof}

   \addcontentsline{toc}{section}{References}
	\bibliographystyle{plain} 
	
	\addcontentsline{toc}{section}{Statements \& Declarations}
	\subsection*{Funding}
    J.C.B. is supported by {\it Agencia Estatal de Investigación} (Spain) through grant PID2023-151823NB-I00 and {\it Junta de Comunidades de Castilla-La Mancha} (Spain) through grant SBPLY/23/180225/000023. G.G.-S. is supported by a Doctoral Fellowship by Universidad de Castilla-La Mancha.  \text{2024-UNIVERS-12844-404}. J.C.R is  supported by ANII under the project ANII FCE 3 2024 1 181302.
	\subsection*{Conflicts of interest}
	The authors declare that there are no conflicts of interest regarding the publication of this paper.
	
\end{document}